\DeclareSymbolFont{bbold}{U}{bbold}{m}{n}
\DeclareSymbolFontAlphabet{\mathbbm}{bbold}
\title{Trivial Endomorphisms of the Calkin Algebra}
\theoremstyle{plain}
	\newtheorem{theorem*}{Theorem}
	\newtheorem{theorem}{Theorem}[section]
\numberwithin{equation}{theorem}
	\newtheorem{proposition}[theorem]{Proposition}
	\newtheorem{lemma}[theorem]{Lemma}
	\newtheorem{corollary}[theorem]{Corollary}
	\newtheorem{claim}{Claim}[theorem]
\theoremstyle{definition}
	\newtheorem*{OCA*}{OCA}
	\newtheorem{definition}[theorem]{Definition}
	\newtheorem{example}[theorem]{Example}
	\newtheorem*{acknow}{Acknowledgements}
\theoremstyle{remark}
	\newtheorem{remark}[theorem]{Remark}
\newcommand{\C}{\mathbb{C}}
\newcommand{\N}{\mathbb{N}}
\newcommand{\set}[1]{\{#1\}}	
\newcommand{\cstar}{$\mathrm{C}^\ast$}
\newcommand{\cA}{\mathcal{A}}
\newcommand{\cQ}{\mathcal{Q}}
\newcommand{\cB}{\mathcal{B}}
\newcommand{\cD}{\mathcal{D}}
\newcommand{\cK}{\mathcal{K}}
\renewcommand{\phi}{\varphi}
\author{Andrea Vaccaro}
\address[A. Vaccaro]{Department of Mathematics, Universit\'e de Paris, 1 Place Aur\'elie Nemours, 
75013 Paris, France}
\email[]{vaccaro@imj-prg.fr}
\urladdr{https://sites.google.com/view/avaccaro/home}
\date{}
\keywords{Calkin algebra, endomorphisms, unilateral shift, Open Coloring Axiom.}
\begin{document}
\maketitle
\begin{abstract}
We prove that it is consistent with ZFC that every unital endomorphism of the Calkin algebra $\cQ(H)$ is unitarily equivalent to an endomorphism of $\cQ(H)$ which is
liftable to a unital endomorphism
of $\cB(H)$.
We use this result to classify all unital endomorphisms of $\cQ(H)$ up to unitary equivalence
by the Fredholm index of the image of the unilateral shift.
As a further application, we show that it is consistent with ZFC that the class of \cstar-algebras that
embed into $\cQ(H)$ is not closed under tensor product nor countable inductive limit.
\end{abstract}

\section{Introduction}
Let $H$ be a separable, infinite-dimensional, complex Hilbert space. The Calkin algebra $\cQ(H)$ is the
quotient of $\cB(H)$, the algebra of all linear, bounded operators on $H$, over the ideal of compact
operators $\cK(H)$. Let $q: \cB(H) \to \cQ(H)$ be the quotient map.

Over the last 15 years, the study of automorphisms of the Calkin algebra has been the setting
for some of the most significant applications of set theory
to \cstar-algebras. The original
motivation behind these investigations is of \cstar-algebraic nature, and it dates back
to the seminal paper \cite{bdf}.
There, it was asked whether there exists a K-theory reverting
automorphism of $\cQ(H)$ or, more concretely, an automorphism  of $\cQ(H)$
sending the unilateral shift to its adjoint. Since all inner automorphisms act trivially on the K-theory
of a unital \cstar-algebra, a preliminary question posed in \cite{bdf} is whether the Calkin algebra
has outer automorphisms at all.

The answer turned out to be depending on set theoretic axioms.
Phillips and Weaver showed in \cite{outer} that outer automorphisms of $\cQ(H)$ exist if the \emph{Continuum
Hypothesis} CH is assumed, while in \cite{inner} Farah proved that the \emph{Open Coloring Axiom} 
OCA (see Definition \ref{oca})
implies that all the automorphisms of $\cQ(H)$ are inner.
In particular, it is consistent with ZFC that there is no automorphism of $\cQ(H)$
sending the unilateral shift to its adjoint.
It is still unknown whether
the existence of an automorphism of $\cQ(H)$
sending the unilateral shift to its adjoint
is consistent with ZFC, since all the automorphisms built in \cite{outer} act
like inner automorphisms on every separable subalgebra of $\cQ(H)$.

In this note we investigate the effects of OCA on the endomorphisms of the Calkin algebra.
The main consequence of OCA that we show is a complete classification of the endomorphisms
of $\cQ(H)$, up to unitary equivalence, by the Fredholm index of
the image of the unilateral shift.
Two endomorphisms $ \phi_1, \phi_2 : \cQ(H) \to \cQ(H)$ are \emph{unitarily equivalent} if
there is a unitary $v \in \cQ(H)$ such that $\text{Ad}(v) \circ \phi_1 = \phi_2$.
Let $\text{End}(\cQ(H))$ be the set of all endomorphisms of $\cQ(H)$ modulo
unitary equivalence. Let $\text{End}_u(\cQ(H))$ be the set of all the classes in
$\text{End}(\cQ(H))$ corresponding to
unital endomorphisms. The operation of direct sum $\oplus$ naturally
induces a structure of semigroup on both $\text{End}(\cQ(H))$ and $\text{End}_u(\cQ(H))$,
as well as the operation of composition $\circ$.
Fix an orthonormal basis $\set{\xi_k}_{k \in \N}$ of $H$ and let $S$ be the unilateral shift
sending $\xi_k$ to $\xi_{k+1}$.

\begin{theorem} \label{mt2}
Assume OCA.
Two endomorphisms $ \phi_1, \phi_2: \cQ(H) \to \cQ(H)$ are unitarily equivalent if and only
if the following two conditions are satisfied:
\begin{enumerate}
\item \label{mta} There is a unitary $w \in \cQ(H)$ such that $w \phi_1(1) w^* = \phi_2(1)$.
\item \label{mtb} The (finite) Fredholm indices of $\phi_1(q(S)) + (1 - \phi_1(1))$ and $\phi_2(q(S)) + (1- \phi_2(1))$ are equal.
\end{enumerate}
Moreover, the map sending $\phi \in \text{End}_u(\cQ(H))$ to
$- \text{ind}(\phi(q(S)))$ is a semigroup isomorphism between $(\text{End}_u(\cQ(H)), \oplus)$ and
$(\N \setminus \set{0}, +)$, as well as between $(\text{End}_u(\cQ(H)), \circ)$
and $(\N \setminus \set{0}, \cdot)$.
\end{theorem}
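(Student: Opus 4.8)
\emph{Strategy.} The plan is to leverage the abstract's lifting theorem --- under OCA every unital endomorphism of $\cQ(H)$ is unitarily equivalent to one induced by a unital $*$-endomorphism of $\cB(H)$ --- together with the classical structure of such endomorphisms. I would first settle the case where $\phi_1,\phi_2$ are unital, and then reduce the general statement and the two semigroup isomorphisms to it.

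\emph{The unital case.} Assume $\phi_1,\phi_2$ are unital and, by the lifting theorem, write $\phi_i\circ q=q\circ\Phi_i$ for unital $*$-endomorphisms $\Phi_i$ of $\cB(H)$. I would first note that each $\Phi_i$ is injective: $\ker\Phi_i$ is a proper closed ideal of $\cB(H)$, and it cannot be $\cK(H)$, since otherwise $\Phi_i$ would factor through a unital embedding of $\cQ(H)$ into $\cB(H)$, which is impossible because $\cQ(H)$ carries $2^{\aleph_0}$ pairwise orthogonal nonzero projections (from an almost disjoint family on $\N$) while $\cB(H)$ admits no uncountable such family. Moreover $\phi_i(q(K))=0$ for $K\in\cK(H)$ forces $\Phi_i(\cK(H))\subseteq\cK(H)$. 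I would then invoke the standard classification of $*$-endomorphisms of $\cB(H)$ leaving $\cK(H)$ invariant: the restriction to $\cK(H)$ is a faithful representation whose non-degenerate part is a multiple of the identity representation, whose orthogonal complement must vanish (otherwise $\cQ(H)$ would again embed into $\cB$ of a separable space), and $\Phi_i(p_{\xi_0})$ is finite rank; hence $\Phi_i$ is unitarily equivalent to the amplification $\Phi_{n_i}\colon T\mapsto T\otimes 1_{n_i}$ on $H\cong H\otimes\C^{n_i}$, with $n_i:=\operatorname{rank}\Phi_i(p_{\xi_0})\in\N\setminus\{0\}$. Since $\Phi_i(S)$ is then a Fredholm isometry with $n_i$-dimensional cokernel and $\phi_i(1)=1$, one gets
\[
\operatorname{ind}\bigl(\phi_i(q(S))+(1-\phi_i(1))\bigr)=\operatorname{ind}\bigl(q(\Phi_i(S))\bigr)=-n_i .
\]
If $n_1=n_2$ the $\Phi_i$ are conjugate by a unitary of $\cB(H)$, so the $\phi_i$ are unitarily equivalent; conversely, unitarily equivalent $\phi_i$ send $q(S)$ to unitarily conjugate, hence equal-index, elements of $\cQ(H)$, so $n_1=n_2$. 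This settles the theorem for unital endomorphisms and identifies $\phi\mapsto n(\phi):=-\operatorname{ind}(\phi(q(S)))$ as a bijection $\operatorname{End}_u(\cQ(H))\to\N\setminus\{0\}$, with $\phi$ unitarily equivalent to the endomorphism $\phi_{n(\phi)}$ induced by $\Phi_{n(\phi)}$.

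\emph{The general case.} Necessity of \eqref{mta} is immediate, and if $\phi_1=\operatorname{Ad}(w)\circ\phi_2$ then \eqref{mta} gives $w\bigl(\phi_2(q(S))+(1-\phi_2(1))\bigr)w^*=\phi_1(q(S))+(1-\phi_1(1))$, whence \eqref{mtb}. For the converse I would assume \eqref{mta} and \eqref{mtb}, replace $\phi_1$ by a unitarily equivalent copy so that $p:=\phi_1(1)=\phi_2(1)$ (this preserves \eqref{mtb}, conjugation being index-preserving), and dispose of $p=0$ (both endomorphisms vanish) and $p=1$ (the unital case). When $p\ne 0,1$, I would lift $p$ to a projection $P$ with $PH$ and $(1-P)H$ both infinite-dimensional, note that $p\cQ(H)p\cong\cQ(PH)$ and that direct-summing by the identity on $(1-P)H$ gives $\operatorname{ind}_{\cQ(H)}\bigl(u+(1-p)\bigr)=\operatorname{ind}_{\cQ(PH)}(u)$ for unitaries $u$ of the corner, and fix a $*$-isomorphism $\theta\colon p\cQ(H)p\to\cQ(H)$ implemented by a unitary $H\to PH$, so that $\theta$ preserves Fredholm index. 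Then $\theta\phi_1,\theta\phi_2$ are unital endomorphisms of $\cQ(H)$ with $n(\theta\phi_i)=-\operatorname{ind}\bigl(\phi_i(q(S))+(1-p)\bigr)$, so \eqref{mtb} and the unital case give $\theta\phi_1\simeq\theta\phi_2$; pulling the implementing unitary back through $\theta^{-1}$ and extending it by $1-p$ to a unitary of $\cQ(H)$ yields $\phi_1\simeq\phi_2$.

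\emph{Semigroup structure, and the main obstacle.} I would check that $\oplus$ and $\circ$ descend to $\operatorname{End}_u(\cQ(H))$ (for composition using $\operatorname{Ad}(v)\phi_1\circ\operatorname{Ad}(u)\phi_2=\operatorname{Ad}(v\phi_1(u))\circ(\phi_1\circ\phi_2)$). Under $\cQ(H\oplus H)\cong\cQ(H)$, $\phi_m\oplus\phi_n$ is induced by $\Phi_{m+n}$, and additivity of the Fredholm index over direct sums gives $n(\phi_m\oplus\phi_n)=m+n$, so $n$ is a semigroup isomorphism $(\operatorname{End}_u(\cQ(H)),\oplus)\to(\N\setminus\{0\},+)$; similarly $\Phi_m\circ\Phi_n$ is the amplification by $mn$ because $(H\otimes\C^n)\otimes\C^m\cong H\otimes\C^{mn}$, so $\phi_m\circ\phi_n\simeq\phi_{mn}$ and, together with $n(\mathrm{id})=1$, the map $n$ is also a semigroup isomorphism $(\operatorname{End}_u(\cQ(H)),\circ)\to(\N\setminus\{0\},\cdot)$. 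I expect the only serious obstacle to be the lifting theorem invoked at the very start: passing, under OCA, from the local/combinatorial data of an arbitrary unital endomorphism of $\cQ(H)$ to a genuine $*$-endomorphism of $\cB(H)$ lifting it is the set-theoretic heart of the matter, whereas everything above is bookkeeping with Fredholm indices and corners.
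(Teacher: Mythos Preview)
Your proposal is correct and follows essentially the same route as the paper: invoke the main lifting theorem (Theorem~\ref{thrm:main}) to replace each $\phi_i$ by an endomorphism induced by a $*$-endomorphism $\Phi_i$ of $\cB(H)$, classify $\Phi_i$ as an amplification $\Phi_{m_i}$, read off $m_i=-\operatorname{ind}(\phi_i(q(S)))$, and conclude. The reduction from the general to the unital case and the verification of the semigroup structure also match the paper's argument (the paper's versions are terser, relying on Remark~\ref{nonu} and the identities $\Phi_m\oplus\Phi_n=\Phi_{m+n}$, $\Phi_m\circ\Phi_n=\Phi_{mn}$).

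The one genuine difference is in how you obtain the amplification form of $\Phi_i$. The paper's lifting theorem provides a \emph{strongly continuous} $\Phi_i$, and Remark~\ref{remark:sc} then gives the classification immediately from the action on rank-one projections. You instead ignore strong continuity and argue algebraically: $\ker\Phi_i$ is a proper ideal which cannot be $\cK(H)$ (since $\cQ(H)$ does not embed into $\cB(H)$ via the almost-disjoint-family argument), $\Phi_i(\cK(H))\subseteq\cK(H)$, and then the representation theory of $\cK(H)$ together with the same non-embedding fact forces $\Phi_i$ to be a finite amplification. Both arguments are short and standard; yours is a bit more self-contained in that it does not use the strong continuity clause of the lifting theorem, while the paper's is slightly quicker given that clause is already available.
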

An explicit description of $(\text{End}(\cQ(H)), \oplus)$
and $(\text{End}(\cQ(H)), \circ)$ under OCA is given in Remark \ref{nonu}.

The second consequence of OCA we prove in this paper is related to the recent works \cite{fhv},
\cite{fkv} and \cite[Chapter 2]{phd}, where methods from set theory are employed in the study of
nonseparable subalgebras of $\cQ(H)$.

Let $\mathbb{E}$ be the class of all \cstar-algebras
that embed into the $\cQ(H)$.
Under CH the Calkin algebra has density character $\aleph_1$ (the first uncountable cardinal), hence
a \cstar-algebra belongs to $\mathbb{E}$ if and only
if its density character is at most $\aleph_1$ (\cite[Theorem A]{fhv}). Therefore,
when CH holds, the class $\mathbb{E}$ is closed under all operations whose output,
when receiving as input \cstar-algebras of density character at most $\aleph_1$, is a \cstar-algebra
whose density character is at most $\aleph_1$. This includes, for instance, minimal/maximal tensor products
and countable inductive limits.

With the following result, we show that some of these closure properties might fail if CH is not assumed, answering
\cite[Question 5.3]{fkv}.
\begin{theorem} \label{closure}
Assume OCA.
\begin{enumerate}
\item \label{oca1} The class $\mathbb{E}$ is not closed under minimal/maximal tensor product.
Moreover, there exists $\cA \in \mathbb{E}$ such that
$\cA \otimes_{\gamma} \cB \notin \mathbb{E}$ for every infinite-dimensional, unital
$\cB \in \mathbb{E}$ and for every tensor norm $\gamma$.
\item \label{oca2} The class $\mathbb{E}$ is not closed under countable inductive limits.
\end{enumerate}
In particular, both \eqref{oca1} and \eqref{oca2} are independent from ZFC.
\end{theorem}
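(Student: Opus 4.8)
The plan is to derive both statements from one structural consequence of Theorem~\ref{mt2}: \emph{if $\phi$ is a unital endomorphism of $\cQ(H)$, then the relative commutant $\phi(\cQ(H))'\cap\cQ(H)$ is finite-dimensional.} To see this, set $n=-\text{ind}(\phi(q(S)))\in\N\setminus\set{0}$ and consider the $n$-fold amplification $\alpha_n\colon\cQ(H)\to M_n(\cQ(H))$, $a\mapsto a\oplus\dots\oplus a$; since $M_n(\cQ(H))\cong\cB(H^{\oplus n})/\cK(H^{\oplus n})\cong\cQ(H)$, this is a unital endomorphism of $\cQ(H)$, and a Fredholm lift of $\alpha_n(q(S))$ is $S\oplus\dots\oplus S$, of index $-n$, so that $\alpha_n$ and $\phi$ take the same value under the invariant of Theorem~\ref{mt2}. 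Hence $\alpha_n$ and $\phi$ are unitarily equivalent, and since conjugation by a unitary preserves relative commutants it suffices to compute $\alpha_n(\cQ(H))'\cap M_n(\cQ(H))$. But $\alpha_n(\cQ(H))=\cQ(H)\otimes 1\subseteq\cQ(H)\otimes M_n$, and a routine matrix-entry argument shows $(A\otimes 1)'\cap(A\otimes M_n)=Z(A)\otimes M_n$ for any unital \cstar-algebra $A$; as $\cQ(H)$ is simple, $Z(\cQ(H))=\C$, so the relative commutant is $\cong M_n$, which is finite-dimensional.

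Granting this, \eqref{oca1} follows by taking $\cA=\cQ(H)\in\mathbb{E}$. Suppose toward a contradiction that $\cB\in\mathbb{E}$ is infinite-dimensional and unital, that $\gamma$ is a tensor norm, and that $\iota\colon\cQ(H)\otimes_\gamma\cB\hookrightarrow\cQ(H)$ is an embedding. For any \cstar-norm $\gamma$ one has $\gamma(a\otimes 1)=\|a\|$ and $\gamma(1\otimes b)=\|b\|$, so $\iota$ restricts to embeddings of $\cQ(H)$ and of $\cB$ whose ranges commute. Let $p=\iota(1\otimes 1)$, a nonzero projection; then $\iota$ maps $\cQ(H)\otimes_\gamma\cB$ unitally into the corner $p\cQ(H)p$, and since $p\cQ(H)p\cong\cQ(H)$ for every nonzero projection $p$, the map $\phi\colon a\mapsto\iota(a\otimes 1)$ becomes, after this identification, a unital endomorphism of $\cQ(H)$ whose relative commutant contains $\iota(1\otimes\cB)\cong\cB$. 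This contradicts the structural fact above. Hence $\cQ(H)\otimes_\gamma\cB\notin\mathbb{E}$ for every infinite-dimensional unital $\cB\in\mathbb{E}$ and every tensor norm $\gamma$.

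For \eqref{oca2}, let $\cB=\varinjlim_n M_{k_n}$ be a UHF algebra (with unital connecting maps), so $\cB$ is infinite-dimensional, unital and separable, whence $\cB\in\mathbb{E}$. Since the minimal tensor product commutes with inductive limits,
\[
\cQ(H)\otimes_{\min}\cB=\varinjlim_n\bigl(\cQ(H)\otimes_{\min}M_{k_n}\bigr)=\varinjlim_n M_{k_n}(\cQ(H)),
\]
a countable inductive limit, with unital injective connecting homomorphisms, each of whose terms $M_{k_n}(\cQ(H))\cong\cQ(H)$ lies in $\mathbb{E}$; yet $\cQ(H)\otimes_{\min}\cB\notin\mathbb{E}$ by \eqref{oca1}. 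Thus $\mathbb{E}$ is not closed under countable inductive limits. Finally, the independence assertion only requires recalling that under CH --- indeed under $2^{\aleph_0}=\aleph_1$ --- the class $\mathbb{E}$ is exactly that of \cstar-algebras of density character at most $\aleph_1$, and is therefore closed under tensor products and countable inductive limits; since CH and OCA are each consistent with ZFC, \eqref{oca1} and \eqref{oca2} are independent of ZFC.

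The one genuinely nontrivial ingredient is the structural fact of the first paragraph, and this is where OCA enters, through Theorem~\ref{mt2}; everything after it is soft. If one wished to avoid quoting Theorem~\ref{mt2}, one could instead use the liftability theorem of this paper to replace $\phi$, up to unitary equivalence, by the endomorphism of $\cQ(H)$ induced by a unital endomorphism $\Phi$ of $\cB(H)$, invoke Arveson's description $\Phi(a)=W(a\otimes 1_n)W^*$ of such maps (with $n<\infty$, forced since $\Phi$ must carry $\cK(H)$ into itself), and note that an operator on $H\otimes\C^n$ commuting modulo $\cK$ with every $a\otimes 1_n$ lies in $(1\otimes M_n)+\cK(H\otimes\C^n)$; the relative-commutant computation then proceeds as above. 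I expect the main obstacle in writing everything out carefully to be precisely the bookkeeping around this relative commutant and the reduction to the corner $p\cQ(H)p$; the tensor-product and inductive-limit manipulations are routine.
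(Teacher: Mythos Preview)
Your proof is correct and follows the same overall strategy as the paper: both establish the key structural fact that the relative commutant of the image of any unital endomorphism of $\cQ(H)$ is finite-dimensional, then take $\cA=\cQ(H)$ for part~\eqref{oca1} and an inductive limit of matrix algebras for part~\eqref{oca2}.

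The one noteworthy difference is in how that structural fact is obtained. The paper uses Theorem~\ref{thrm:main} to lift $\phi$ (up to a unitary) to $\Phi_m$ on $\cB(H)$, observes that $\Phi_m[\cB(H)]'\cap\cB(H\otimes\C^m)\cong M_m(\C)$, and then invokes \cite[Lemma~3.2]{jp} to transfer this to the quotient and conclude $\phi[\cQ(H)]'\cap\cQ(H)\cong M_m(\C)$. You instead use Theorem~\ref{mt2} to identify $\phi$ directly with the amplification $\alpha_n$ inside $M_n(\cQ(H))$ and compute the commutant there via the elementary identity $(A\otimes 1)'\cap(A\otimes M_n)=Z(A)\otimes M_n$, using simplicity of $\cQ(H)$. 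Your route is more self-contained, avoiding the Johnson--Parrott black box; the paper's route has the mild advantage of only needing the triviality statement (Theorem~\ref{thrm:main}) rather than the full classification. Your final paragraph in fact sketches exactly the paper's alternative.
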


Theorem \ref{mt2} and Theorem \ref{closure} are proved in \S \ref{Sclass} using
Theorem \ref{thrm:main}, for which we need to introduce
a definition.

We say that an endomorphism $\phi: \cQ(H) \to \cQ(H)$
is \emph{trivial} if there is a unitary $v \in \cQ(H)$ and a strongly continuous (i.e. strong-strong continuous) endomorphism $\Phi: \cB(H) \to \cB(H)$
such that the following diagram commutes. 
\[
\begin{tikzcd}
\cB(H) \arrow{r}{\Phi} \arrow[swap]{d}{q} & \cB(H) \arrow{d}{q} \\%
\cQ(H) \arrow{r}{\text{Ad}(v) \circ \phi}& \cQ(H)
\end{tikzcd}
\]
With this terminology, the main theorem
of \cite{inner} says that under OCA all automorphisms of $\cQ(H)$ are trivial (since, up to
unitary equivalence, they lift to the identity). We extend this result to
all endomorphisms of $\cQ(H)$.

\begin{theorem} \label{thrm:main}
Assume OCA. All endomorphisms of the Calkin algebra are trivial.
\end{theorem}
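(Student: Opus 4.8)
The plan is to adapt the proof of Farah's theorem \cite{inner} that OCA implies every automorphism of $\cQ(H)$ is inner, carrying out first a reduction to the unital case and a reformulation in terms of Cuntz families, and then modifying the set-theoretic core so that it produces, in place of a single implementing unitary, a finite family of isometries with orthogonal ranges. First I reduce to unital $\phi$. Given an endomorphism $\phi$, lift the projection $\phi(1)$ to a projection $P\in\cB(H)$. If $P$ has finite rank then $\phi(1)=q(P)=0$, so $\phi=0$, which lifts to the zero endomorphism of $\cB(H)$ and is trivial. If $1-P$ has finite rank then $\phi(1)=1$ and $\phi$ is already unital. Otherwise $P$ and $1-P$ both have infinite rank; fixing an isometry $W\in\cB(H)$ with $WW^*=P$, the map $\phi'(x):=q(W^*)\phi(x)q(W)$ is a unital endomorphism of $\cQ(H)$ and $\phi(x)=q(W)\phi'(x)q(W^*)$. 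Once $\phi'$ is known to be implemented by a Cuntz family in $\cQ(H)$ (see below), a direct computation exhibits $\phi$ as $x\mapsto\sum_{i\le n} d_i x d_i^*$ for isometries $d_i$ with orthogonal ranges summing to $\phi(1)$, and conjugating this family by a suitable unitary of $\cQ(H)$ (any two $n$-tuples of isometries with orthogonal ranges summing to unitarily equivalent projections differ by a unitary) moves it to a family lifting to $\cB(H)$; hence $\phi$ is trivial. So it suffices to treat unital $\phi$.

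For unital $\phi$ the key reformulation is: \emph{$\phi$ is trivial if and only if there are $b_1,\dots,b_n\in\cQ(H)$ with $b_i^*b_j=\delta_{ij}$ and $\sum_{i\le n}b_ib_i^*=1$ such that $\phi(x)=\sum_{i\le n}b_ixb_i^*$ for all $x$.} Indeed, every strong--strong continuous unital endomorphism of $\cB(H)$ has the form $T\mapsto\sum_i V_iTV_i^*$ for isometries $V_i$ with orthogonal ranges summing to $1$ (a classical fact), and the family must be finite whenever this endomorphism drops to $\cQ(H)$, since otherwise it would not preserve $\cK(H)$; unravelling the definition of triviality gives one direction. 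Conversely, any two such $n$-tuples in a unital \cstar-algebra differ by left multiplication by a unitary, so from an implementing family $(b_i)$ in $\cQ(H)$ one conjugates $\phi$ to $q(T)\mapsto q\bigl(\sum_{i\le n}U_iTU_i^*\bigr)$ for a genuine Cuntz family $(U_i)$ of isometries in $\cB(H)$, which lifts the strong--strong continuous endomorphism $\sum_i U_i(\cdot)U_i^*$. Note that for $\phi$ of this form $\phi(q(S))$ is invertible with Fredholm index $-n$; thus the implementing family is automatically non-empty ($n\ge1$) and $n=-\text{ind}(\phi(q(S)))$, which a posteriori yields the numerical part of Theorem \ref{mt2}.

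By the reformulation it remains to produce, under OCA, an implementing Cuntz family in $\cQ(H)$ for every unital $\phi$ — the case $n=1$ being exactly \cite{inner}. Following the scheme of \cite{inner}, the first and main step is to show that $\phi$ is \emph{topologically trivial}: choosing pairwise orthogonal finite-rank projections $(E_k)$ with $\sum_k E_k=1$ and $\dim E_kH\to\infty$, one shows that $\phi$ restricted to the block-diagonal subalgebra $q\bigl(\prod_k\cB(E_kH)\bigr)$ admits a lift that is continuous on the natural Polish pieces, and is there implemented by $n$ near-isometries with almost-orthogonal ranges almost-summing to $1$. As in \cite{inner} this is obtained by applying the Open Coloring Axiom to a colouring of pairs of finitary $(\varepsilon,k)$-approximations of $\phi$: the homogeneity alternative excludes ``non-definable'' behaviour, and the ensuing uniformisation and coherence argument produces the finitely many approximate isometries, with $n$ forced to be finite and equal to $-\text{ind}(\phi(q(S)))$ by a Fredholm-index count. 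The new feature relative to \cite{inner} is that $\phi(\cQ(H))$ is a proper subalgebra, so the colouring and the stabilisation argument must additionally keep track of the range projection $\sum_i b_ib_i^*$; one also uses that a unital self-embedding of $\cQ(H)$ cannot act faithfully on a separable Hilbert space (as $\cQ(H)$ contains uncountably many pairwise orthogonal projections), which rules out degenerate lifts. I expect this topological-triviality step to be the principal obstacle: it is the only part genuinely requiring OCA, and the only one where passing from automorphisms to endomorphisms alters the combinatorics.

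Finally, the passage from topological triviality to triviality proceeds as in the second part of \cite{inner}: since the block-diagonal subalgebra above is norming and the implementing family is essentially unique, the implementation extends from it to all of $\cQ(H)$ by a \cstar-algebraic argument in the spirit of Voiculescu's theorem, and the resulting near-isometries are then corrected to an exact Cuntz family $(b_i)$ in $\cQ(H)$ by a compact perturbation. By the reformulation, $\phi$ is trivial; combined with the reductions of the first paragraph, this proves the theorem.
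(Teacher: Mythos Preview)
Your reduction to the unital case and your Cuntz-family reformulation of triviality are both correct, and the latter is indeed equivalent to the paper's formulation via the amplification maps $\Phi_m$ (Remark~\ref{remark:sc}). The overall architecture---use OCA to get good lifts on block-diagonal subalgebras, then pass to a global lift---is also the paper's. However, your allocation of where OCA is needed is wrong, and this is a genuine gap. You assert that the topological-triviality step ``is the only part genuinely requiring OCA'' and that the passage from a lift on a single block-diagonal subalgebra $q(\cD[\vec E])$ to all of $\cQ(H)$ is ``a \cstar-algebraic argument in the spirit of Voiculescu's theorem''. This is false: an implementing Cuntz family on a single $q(\cD[\vec E])$ does \emph{not} determine $\phi$. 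For instance, if $u$ is a central unitary of $\cD[\vec E]$ that is not a scalar modulo compacts, then the identity and $\mathrm{Ad}(q(u))$ are distinct endomorphisms of $\cQ(H)$ agreeing on $q(\cD[\vec E])$; equivalently, both $1$ and $q(u)$ are Cuntz $1$-families implementing the identity there. No Voiculescu-type argument repairs this. In \cite{inner} (and in this paper) OCA is invoked \emph{twice}: once to obtain local lifts on each $\cD[\vec E]$ (Theorem~\ref{thrm:lctr} here, corresponding to \S5--7 of \cite{inner}), and once more to glue these local lifts coherently over all partitions (Theorem~\ref{thrm:main2} here, corresponding to \S3 of \cite{inner}). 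The second application is not optional.

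There is also a circularity in how you propose to determine $n$. You write that $n$ is ``forced to be finite and equal to $-\mathrm{ind}(\phi(q(S)))$ by a Fredholm-index count'', but a priori $\mathrm{ind}(\phi(q(S)))$ could be any integer (indeed under CH it can be zero; see Example~\ref{ex2}), so this cannot serve as the \emph{input} to your colouring. In the paper the multiplicity $m$ is extracted only \emph{after} local triviality is established, via the rank-sequence arguments of Lemmas~\ref{lemma:seq}--\ref{prop:const}; then the lifts are normalised so that each $\phi\restriction\cD[\vec E]_{\cQ}$ lifts to $\mathrm{Ad}(u_{\vec E})\circ\Phi_m$ with $u_{\vec E}\in\mathcal U(\ell^\infty(M_m(\C)))$ (Lemmas~\ref{lemma:shift}--\ref{lemma:uni}); and only then is the second OCA colouring, on $\N^\N\times\mathcal U(M_m(\C))^\N$, used to produce a single $u$ working for every partition. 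Your proposal also omits the compression/decompression trick with partial isometries (the end of the proof of Theorem~\ref{thrm:lctr}), which is what upgrades a lift on $\cD_X[\vec E]$ for some infinite $X$---all that the OCA machinery of \cite{inner} delivers directly---to a lift on all of $\cD[\vec E]$.
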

The proof of Theorem \ref{thrm:main} occupies both \S\ref{S3} and \S\ref{sctn:main}.
Similarly to Theorem \ref{closure}, Theorem \ref{mt2} and Theorem \ref{thrm:main}
cannot be proved in ZFC alone, in fact they fail under CH.
In \S \ref{Sclass} we
give two examples
of non-trivial endomorphisms of $\cQ(H)$ existing when CH is assumed (Example \ref{ex1} and Example \ref{ex2}),
which highlight different levels of failure of the classification of $\text{End}(\cQ(H))$
in Theorem \ref{mt2}.
In particular, it is possible to find uncountably many (more precisely $2^{\aleph_1}$) inequivalent automorphisms of $\cQ(H)$
which send
the unilateral shift to a unitary of index $-1$ (see Example \ref{ex1}).
Example \ref{ex1} and Example \ref{ex2}, along with Theorem \ref{thrm:main},
entail the following corollary.

\begin{corollary} \label{indep}
The existence non-trivial endomorphisms of $\cQ(H)$ is independent from ZFC.
\end{corollary}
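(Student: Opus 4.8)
The plan is to establish independence by exhibiting a model of ZFC in which non-trivial endomorphisms exist, and then invoking Theorem \ref{thrm:main} for the other direction. The forward direction is immediate: Theorem \ref{thrm:main} asserts that OCA implies every endomorphism of $\cQ(H)$ is trivial, and OCA is known to be relatively consistent with ZFC (it holds, for instance, in the standard model obtained by the Proper Forcing Axiom construction, or more directly in models where OCA has been forced over a model of ZFC). Hence ``every endomorphism of $\cQ(H)$ is trivial'' is consistent with ZFC, so the existence of a non-trivial endomorphism is \emph{not} a theorem of ZFC.

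For the other half I would work under CH and produce an explicit non-trivial endomorphism. This is exactly what Example \ref{ex1} and Example \ref{ex2} are for, as the excerpt announces: under CH one can construct automorphisms of $\cQ(H)$ (in particular endomorphisms) that are not liftable to strongly continuous endomorphisms of $\cB(H)$, even after conjugating by a unitary. The key point is that CH gives enough ``room'' — via a transfinite back-and-forth of length $\aleph_1$ along a filtration of $\cQ(H)$ into separable subalgebras — to build an automorphism whose behaviour on the lift cannot be made strongly continuous; one typically arranges the construction so that the putative lift would have to disagree with any genuine $\ast$-homomorphism $\cB(H) \to \cB(H)$ on a carefully chosen sequence of projections or partial isometries. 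Since CH is also relatively consistent with ZFC, this shows ``there exists a non-trivial endomorphism of $\cQ(H)$'' is consistent with ZFC.

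Putting the two halves together: the statement ``all endomorphisms of $\cQ(H)$ are trivial'' holds in some models of ZFC (those satisfying OCA, by Theorem \ref{thrm:main}) and fails in others (those satisfying CH, by Examples \ref{ex1} and \ref{ex2}). Therefore the existence of non-trivial endomorphisms is independent from ZFC, which is the assertion of the corollary.

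The only genuine obstacle here is the CH construction underlying Examples \ref{ex1} and \ref{ex2}; the corollary itself is then a one-line assembly. Since those examples are treated separately in \S\ref{Sclass} and may be assumed, the proof of the corollary reduces to citing Theorem \ref{thrm:main} together with the two examples and noting the relative consistency of both OCA and CH with ZFC.
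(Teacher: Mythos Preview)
Your proposal is correct and matches the paper's approach exactly: the corollary is simply the combination of Theorem \ref{thrm:main} (under OCA all endomorphisms are trivial) with Examples \ref{ex1} and \ref{ex2} (under CH non-trivial endomorphisms exist), together with the relative consistency of OCA and CH with ZFC. The paper does not give a separate proof but states the corollary as an immediate consequence of these ingredients, just as you conclude in your final paragraph.
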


We remark that, unlike the results concerning automorphisms of $\cQ(H)$ in \cite{inner}, the
commutative analogue of Theorem \ref{thrm:main} for $\mathcal{P}(\N) / \text{Fin}$ does not
hold. In \cite{dow} it is proved that there are non-trivial endomorphisms of
$\mathcal{P}(\N) / \text{Fin}$ in ZFC. In this scenario, the best one can hope for is the so called
\emph{weak Extension Principle}, a consequence of $\text{OCA} + \text{MA}$ introduced and
discussed in \cite[Chapter 4]{analy} (those ideas
can already be found in \cite{just}).

A crucial feature of the Calkin algebra, which allows to carry on the arguments
needed to prove Theorem \ref{thrm:main}, is
the presence of partial isometries, which allow to compress/decompress operators
into/from infinite-dimensional
subspaces of $H$. These objects play a key role in the proof of Theorem \ref{thrm:lctr},
and they were already extensively used in \cite{inner} (see in particular Lemma 4.1 of that paper
and \cite[\S17.5]{ilijasbook}).
The existence of these isometries is directly responsible
for the fact that one does not need to assume Martin's Axiom when
proving Theorem \ref{thrm:main} or the main result of \cite{inner}. Such assumption
is instead customary
for many of the known proofs of rigidity phenomena in the context
of Boolan algebras (\cite{veloca}), and
of more
general corona algebras (\cite{mckvignati}, \cite{rig}), where these partial isometries
(or some sort of substitute) do not exist.

The observations and results exposed in this note
are in continuity with the numerous studies investigating the
strong rigidity properties induced by the \emph{Proper Forcing Axiom} (of which OCA is a consequence) on the Calkin algebra and on other
nonseparable quotient algebras
(\cite{inner}, \cite{mckvignati}, \cite{rig}, \cite{roecoronas}; see also \cite{analy}, \cite{veloca}).
The Continuum Hypothesis, on the
other hand,
grants the opposite effect, allowing to prove
the existence of too many maps on these quotients for all of them to be `trivial'
(\cite{outer} \cite{cosfar}, \cite{fms}). Woodin's $\Sigma^2_1$-absoluteness Theorem
gives a deep mathematical motivation for the efficacy of CH in solving these problems (see \cite{sigma12}).

The paper is structured as follows. Section \ref{S2} contains
preliminaries and definitions. Sections \ref{S3} and \ref{sctn:main} are devoted to the proof
of Theorem \ref{thrm:main}.
In \S\ref{S3} we show that locally trivial (see Definition \ref{trivial})
endomorphisms of $\cQ(H)$ are, up to unitary transformation, locally liftable with `nice' unitaries
in $\cB(H)$. In \S\ref{sctn:main}, adapting the main arguments from \cite{inner} (see also
\cite[\S17]{ilijasbook}), we prove that under
OCA all endomorphisms of $\cQ(H)$ are locally trivial, and that all locally trivial endomorphisms are trivial. We remark that OCA is only needed in
\S\ref{sctn:main}, while the results in \S\ref{S3} require no additional set-theoretic axiom. Finally,
\S \ref{Sclass} contains the proof of Theorem \ref{mt2}, the proof of Theorem \ref{closure}, some observations about what
 $\text{End}(\cQ(H))$ looks like under CH, and some open questions.

\section{Notation and Preliminaries} \label{S2}

The only extra set-theoretic assumption required for our proofs is the Open Coloring Axiom
OCA, which is defined as follows.
\begin{definition} \label{oca}
Given a set $X$, let $[X]^2$ be the set of all unordered pairs of elements of $X$. For a topological
space $X$, a \emph{coloring} $[X]^2 = K_0 \sqcup K_1$ is \emph{open} if the set $K_0$,
when naturally identified with a symmetric subset of $X \times X$, is open in the product topology.
For a $K \subseteq [X]^2$, a subset $Y$ of $X$ is $K$-homogeneous if $[Y]^2 \subseteq K$.
\begin{OCA*}
Let $X$ be a separable, metric space, and let $[X]^2 = K_0 \sqcup K_1$ be an open coloring.
Then either $X$ has an uncountable $K_0$-homogeneous set, or it can be covered by countably
many $K_1$-homogeneous sets.
\end{OCA*}
\end{definition}

This statement, which contradicts CH,
is independent from ZFC and it was introduced by Todorcevic in \cite{parti}.

For the rest of the paper, fix $\set{\xi_k}_{k \in \N}$
an orthonormal basis of $H$ and identify $\ell^\infty$, the \cstar-algebra of all uniformly bounded
sequences of complex numbers,
with the atomic masa of all operators in $\cB(H)$ diagonalized by such basis.
With this identification, the algebra $c_0 = \ell^\infty \cap \cK(H)$
is the set of all sequences converging to zero.

Given a set $M \subseteq \N$, $P_M$ denotes the orthogonal
projection onto the closure of $\text{span}\set{\xi_k : k \in M}$. If $M = \set{k}$,
we simply write $P_k$.
Throughout this paper, we focus on partitions $\vec{E}$ of $\N$ 
composed by consecutive, finite intervals. We use the notation
$\text{Part}_\N$ to denote the set of such partitions.\footnote{Note
that in \cite[\S 9.7]{ilijasbook} the notation
$\text{Part}_\N$ is used to denote a slightly larger family,
including also all partitions of cofinal subsets of $\N$ into finite intervals. In this paper we will exclusively
consider partitions of $\N$.}

Given a partition $\vec{E} = \set{E_n}_{n \in \N} \in \text{Part}_\N$,
$\cD[\vec{E}]$ is the von Neumann algebra of all operators in $\cB(H)$ for which each $\text{span}
\set{\xi_k : k \in E_n}$ is invariant. Equivalently, $\cD[\vec{E}]$ is the set of all operators which commute
with $P_{E_n}$ for every $n \in \N$. It is straightforward to check that
$\cD[\vec E] \cong \prod_{n \in \N} M_{\lvert E_n \rvert}(\C)$.
Given a subset $X \subseteq \N$, $\cD_X[\vec{E}]$ denotes
the \cstar-algebra of all the operators in $\cD[\vec{E}]$ which map to zero all elements in $\bigcup_{n \in \N \setminus X}
\{\xi_k : k \in E_n \}$.

Given a unital $*$-homomorphism $\Phi: \cD[\vec{E}] \to \cB(H)$
such that $\Phi[\cD[\vec{E}] \cap \mathcal{K}(H)] \subseteq \mathcal{K}(H)$, $\vec{n}_\Phi$
denotes the sequence $\set{\text{rk}(\Phi(P_k))}_{k \in \N}$ of the (finite) ranks of the
projections $\Phi(P_k)$.
Notice that $\vec{n}_\Phi$ only depends on how $\Phi$ acts on $\ell^\infty$ and that
$n_i = n_j$ whenever $i, j \in E_n$ for some $n \in \N$. For a partition $\vec{E} = \set{E_n}_{n \in \N} \in \text{Part}_\N$, $\vec{E}^{\text{even}} \in \text{Part}_\N$ is the partition $\set{E_{2n} \cup E_{2n+1}}_{n \in \N}$ and
$\vec{E}^{\text{odd}} \in \text{Part}_\N$ is the partition $\set{E_0} \cup \set{E_{2n+1} \cup E_{2n+2}}_{n \in \N}$.
These notions of even and odd partitions were first introduced in \cite{inner}.

The \emph{strong topology} on $\cB(H)$ (and on any subalgebra of $\cB(H)$) is the topology induced
by the pointwise norm convergence on $H$, hence
a sequence $\set{T_n}_{n \in \N}$ of operators in $\cB(H)$ \emph{strongly converges} to $T$ if and only if
$T_n \xi \to T \xi$ in norm for every $\xi \in H$.

For every partial isometry $v$ in a unital \cstar-algebra $\cA$, $\text{Ad}(v): \cA \to \cA$ is the map
sending $a$ to $vav^*$ for every $a \in \cA$.

For every subalgebra $\cA$ of $\cB(H)$, let
$\cA_\cQ$ be the quotient $\cA /(\mathcal{K}(H)\cap \cA)$. Given a map
$\phi: \cA_\cQ \to \cQ(H)$, the function $\Phi: \cA \to \cB(H)$ \emph{lifts} (or \emph{is a lift of}) $\phi$
if the following diagram commutes:
\begin{equation*}
\begin{tikzcd}
\cA \arrow{r}{\Phi} \arrow[swap]{d}{q} & \cB(H) \arrow{d}{q} \\%
\cA_\cQ \arrow{r}{\phi}& \cQ(H)
\end{tikzcd}
\end{equation*}

\begin{definition} \label{trivial}
Given a \cstar-algebra $\cA \subseteq \cB(H)$, we say that an embedding (i.e. an injective
$*$-homomorphism)
$\phi:\cA_\cQ \to \cQ(H)$ is \emph{trivial} if there exists a unitary $v \in \cQ(H)$
and a strongly continuous (i.e. strong-strong continuous),
$*$-homomorphism $\Phi: \cA \to \cB(H)$ such that $\Phi$ lifts $\text{Ad}(v) \circ \phi$. An endomorphism $\phi: \cQ(H) \to \cQ(H)$ is
\emph{locally trivial} if,
for every $\vec{E} \in \text{Part}_\N$, the restriction $\phi \restriction \cD[\vec{E}]_\cQ$ is trivial.
\end{definition}

Given two operators $T, S \in \cB(H)$, we use the notation $T \sim_{\cK(H)} S$ to abbreviate
$T - S \in \cK(H)$. Analogously, for a \cstar-algebra $\cA$ and two functions
$\Phi_1, \Phi_2: \cA \to \cB(H)$, $\Phi_1 \sim_{\cK(H)} \Phi_2$ abbreviates $\Phi_1(a) \sim_{\cK(H)}
\Phi_2(a)$ for all $a \in \cA$.

Given a \cstar-algebra $\cA \subseteq \cB(H)$ ($\cA \subseteq \cQ(H)$), the \emph{commutant} $\cA' \cap \cB(H)$ ($\cA' \cap \cQ(H)$) is the set
of all the operators in $\cB(H)$ ($\cQ(H)$) commuting with all elements of $\cA$.

The (\emph{Fredholm}) \emph{index} of an operator $T \in \cB(H)$ is the integer
$\text{dim}(\text{ker}(T)) - \text{codim}(\text{ran}(T))$. We say that  an operator $T$ is \emph{Fredholm}
if both $\text{dim}(\text{ker}(T))$ and $\text{codim}(\text{ran}(T))$ are finite. By Atkinson's Theorem (\cite[Theorem 1.4.16]{murphy}),
an operator $T$ is Fredholm if and only if $q(T)$ is invertible in $\cQ(H)$. In particular
any unitary $u \in \cQ(H)$ lifts to a Fredholm operator which can be assumed to be a partial isometry.

\begin{remark}  \label{remark:sc}
Strongly continuous, unital endomorphisms of 
$\cB(H)$ have an extremely
rigid structure. Indeed, strong continuity implies that such maps are uniquely determined by how they
behave on the projections whose range is 1-dimensional.
Since all these projections are Murray-von Neumann
equivalent, the same is true for their images, which therefore all have the same rank.

For every $m \in \N$, let $\Phi_m: \cB(H) \to \cB(H \otimes \C^m)$ be the map
sending $T$ to $T \otimes 1_m$. As $\cB(H)$ and $\cB(H \otimes \C^m)$ are isomorphic,
with an abuse of notation we consider $\Phi_m$ as a map from $\cB(H)$ into $\cB(H)$.
If $\Phi: \cB(H) \to \cB(H)$ is a unital, strongly continuous
endomorphism sending compact operators into compact operators,
by the previous observation it is possible to find an $m \in \N \setminus  \set{0}$ and a unitary
$U \in \cB(H)$ such that $\Phi = \text{Ad}(U) \circ \Phi_m$.

Our classification of $\text{End}(\cQ(H))$ and $\text{End}_u(\cQ(H))$ in Theorem \ref{mt2}
will be based on this simple observation.
Since the commutant of the image of $\ell^\infty$ via $\Phi_m$ is isomorphic to $\ell^\infty(
M_m(\C))$ (the \cstar-algebra of all norm-bounded sequences of $m \times m$ matrices with complex
entries), the same is true for the commutant of the image of $\ell^\infty$ via $\Phi$.
\end{remark}

Every unital endomorphism $\phi: \cQ(H) \to \cQ(H)$ is uniquely determined by its
restrictions $\set{\phi \restriction \cD[\vec{E}]_\cQ}$ as $\vec{E}$ varies in $\text{Part}_\N$. This is a consequence
of the following standard fact (see \cite[Lemma 1.2]{inner} or the proof of \cite[Theorem 3.1]{matroid}).
\begin{proposition} \label{prop:2qd}
For every countable set $\set{T_n}_{n \in \N}$ in $\cB(H)$ there exists a partition $\vec{E}\in  \text{Part}_\N$
such that for every $n \in \N$ there are $T^0_n \in \cD[\vec{E}^{\text{even}}]$ and
$T^1_n \in \cD[\vec{E}^{\text{odd}}]$ such that $T_n \sim_{\cK(H)} T_n^0 + T_n^1$.
\end{proposition}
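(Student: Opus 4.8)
The plan is to produce a single partition $\vec E=\set{E_n}_{n\in\N}\in\text{Part}_\N$ with respect to which \emph{every} $T_n$ is tridiagonal modulo $\cK(H)$, and then to split the tridiagonal part according to the parity of the index. The starting observation is that for any $T\in\cB(H)$ and any $k\in\N$ the operators $TP_{[0,k]}$ and $T^*P_{[0,k]}$ have finite rank, so $\lVert P_{[m,\infty)}TP_{[0,k]}\rVert\to 0$ and $\lVert P_{[0,k]}TP_{[m,\infty)}\rVert\to 0$ as $m\to\infty$ (here I write $[0,k]$, $[m,\infty)$ for the corresponding subsets of $\N$). Using this I would build $0=k_0<k_1<k_2<\cdots$ recursively, choosing $k_{j+1}$ so large that
\[
\lVert P_{[k_{j+1},\infty)}T_nP_{[0,k_j]}\rVert<2^{-j}
\qquad\text{and}\qquad
\lVert P_{[0,k_j]}T_nP_{[k_{j+1},\infty)}\rVert<2^{-j}
\]
hold for every $n\le j$ — only finitely many requirements at each stage. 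Set $E_n=\set{k_n,k_n+1,\dots,k_{n+1}-1}$, so that $\vec E\in\text{Part}_\N$, and write $Q_i=P_{E_i}$.

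Next I would fix $n$ and decompose $T_n$ into diagonal bands. Let $T_n^{\mathrm{trid}}=\sum_iQ_iT_nQ_i+\sum_i(Q_iT_nQ_{i+1}+Q_{i+1}T_nQ_i)$ be the tridiagonal part (a strongly convergent sum, bounded by $3\lVert T_n\rVert$), and observe that $T_n-T_n^{\mathrm{trid}}=R_++R_-$, where $R_+=\sum_iQ_iT_nP_{[k_{i+2},\infty)}$ and $R_-=\sum_iP_{[k_{i+2},\infty)}T_nQ_i$ (check this on basis vectors and extend by density and boundedness). Since $Q_i\le P_{[0,k_{i+1}]}$ and $k_{i+2}=k_{(i+1)+1}$, the choice made at stage $i+1$ gives $\lVert Q_iT_nP_{[k_{i+2},\infty)}\rVert\le\lVert P_{[0,k_{i+1}]}T_nP_{[k_{i+2},\infty)}\rVert<2^{-(i+1)}$ for all $i\ge n-1$. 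Moreover the summands of $R_+$ have pairwise orthogonal ranges (contained in $\text{ran}(Q_i)$), so $\lVert\sum_{i\ge N}Q_iT_nP_{[k_{i+2},\infty)}\rVert\le\bigl(\sum_{i\ge N}\lVert Q_iT_nP_{[k_{i+2},\infty)}\rVert^2\bigr)^{1/2}\to 0$, while each partial sum $\sum_{i<N}Q_iT_nP_{[k_{i+2},\infty)}$ has finite rank; hence $R_+\in\cK(H)$. Applying the same argument to $(R_-)^*=\sum_iQ_iT_n^*P_{[k_{i+2},\infty)}$ (now using the first of the two estimates at each stage) shows $R_-\in\cK(H)$, so $T_n\sim_{\cK(H)}T_n^{\mathrm{trid}}$.

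Finally I would carry out the parity split of $T_n^{\mathrm{trid}}$. Put $T_n^0=\sum_mP_{E_{2m}\cup E_{2m+1}}T_nP_{E_{2m}\cup E_{2m+1}}$; this is a block-diagonal operator along the blocks of $\vec E^{\text{even}}$, hence a bounded element of $\cD[\vec E^{\text{even}}]$, and expanding $P_{E_{2m}\cup E_{2m+1}}=Q_{2m}+Q_{2m+1}$ shows that $T_n^0$ accounts for every $Q_iT_nQ_i$ together with the off-diagonal blocks $Q_{2m}T_nQ_{2m+1}$ and $Q_{2m+1}T_nQ_{2m}$. What remains of $T_n^{\mathrm{trid}}$ is $T_n^1=\sum_m(Q_{2m+1}T_nQ_{2m+2}+Q_{2m+2}T_nQ_{2m+1})$; the summand indexed by $m$ is supported on $E_{2m+1}\cup E_{2m+2}$ and the whole operator vanishes on $E_0$, so $T_n^1$ is a bounded element of $\cD[\vec E^{\text{odd}}]$. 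Since $T_n^0+T_n^1=T_n^{\mathrm{trid}}\sim_{\cK(H)}T_n$, this is the required decomposition.

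The only real content is in the construction of the partition: the subtlety is getting a \emph{single} $\vec E$ that works for all $T_n$ simultaneously, which forces the ``diagonal of requirements'' bookkeeping (at stage $j$ one imposes the conditions for every $n\le j$). This is precisely what produces the summable estimate $\lVert Q_iT_nP_{[k_{i+2},\infty)}\rVert<2^{-(i+1)}$ for $i$ large relative to $n$, and hence makes the far-off-diagonal remainders $R_\pm$ compact rather than merely bounded; the rest is the routine algebra of block-tridiagonal operators recorded above.
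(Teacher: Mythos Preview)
Your proof is correct. The paper does not supply its own argument for this proposition but defers to the literature (specifically Lemma~1.2 of \cite{inner}); your construction---a diagonal recursion producing a single partition that makes every $T_n$ block-tridiagonal modulo $\cK(H)$, followed by the even/odd split of the tridiagonal part---is precisely the standard proof recorded there.
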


\section{Locally Trivial Endomorphisms} \label{S3}

Given a unital, locally trivial endomorphism $\phi: \cQ(H) \to \cQ(H)$, throughout this section we fix,
for every partition $\vec{E} \in  \text{Part}_\N$, a partial isometry of finite index $v_{\vec{E}}$ and a strongly continuous,
$*$-homomorphism $\Phi_{\vec{E}} : \cD[\vec{E}] \to \cB(H)$ such that $\text{Ad}(v_{\vec{E}})
\circ \Phi_{\vec{E}}$ lifts the restriction of $\phi$ to $\cD[\vec{E}]_{\cQ}$.

In this section we show that,
up to considering $\text{Ad}(v) \circ \phi$ for some unitary $v \in \cQ(H)$, we can assume that
$\Phi_{\vec{E}}$ is $\Phi_m$\footnote{We denote the restriction of $\Phi_m$ to
$\cD[\vec{E}]$ by $\Phi_m$.} (as defined in Remark \ref{remark:sc}) and $v_{\vec{E}}$ is a unitary in the commutant of $\Phi_m[\ell^\infty]$,
for every partition $\vec{E} \in  \text{Part}_\N$.
We remark that no extra set-theoretic axiom is required in the present section.

\begin{remark} \label{remark:uni}
Notice that for a unital, locally trivial endomorphism $\phi: \cQ(H) \to \cQ(H)$,
for each partition $\vec{E} \in  \text{Part}_\N$ the projection $\Phi_{\vec{E}}(1)$ is a compact perturbation
of the identity, hence its range
has finite codimension $r$. Therefore, by multiplying $v_{\vec{E}}$
by a suitable partial isometry
of index $-r$, we can always assume that $\Phi_{\vec{E}}$ is unital, and we will always implicitly do
so.
\end{remark}

\begin{lemma} \label{lemma:seq}
Let $\Phi_1 : \ell^\infty \to \cB(H)$ and $\Phi_2 : \ell^\infty \to \cB(H)$ be two strongly continuous,
unital $*$-homomorpisms such that $\Phi_1[c_0], \Phi_2[c_0]
\subseteq \mathcal{K}(H)$. Suppose there exist two partial isometries of finite index
$v_1, v_2 $ such that $\text{Ad}(v_1) \circ \Phi_1 \sim_{\cK(H)}
\text{Ad}(v_2) \circ  \Phi_2$. Then the sequences $\vec{n}_{\Phi_1} = \set{\text{rk}(\Phi_1(P_k))}_{k \in \N}$ and $\vec{n}_{\Phi_2} = \set{\text{rk}(\Phi_2(P_k))}_{k \in \N}$ are eventually equal. 
\end{lemma}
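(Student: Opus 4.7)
The plan is to reduce the hypothesis to a single-sided compact equivalence and then, assuming the conclusion fails, extract via a diagonal thinning a weakly null orthonormal sequence whose image under an operator that is compact by hypothesis has norm tending to $1$.

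First I would reduce. Multiplying $v_1\Phi_1(a)v_1^* \sim_{\cK(H)} v_2\Phi_2(a)v_2^*$ on the left by $v_1^*$ and on the right by $v_1$, and absorbing the finite-rank corrections coming from $1 - v_1^*v_1$ into $\cK(H)$, yields $\Phi_1(a) \sim_{\cK(H)} W\Phi_2(a)W^*$ for every $a \in \ell^\infty$, with $W := v_1^*v_2$. Since $q(W) = q(v_1)^*q(v_2)$ is a product of unitaries in $\cQ(H)$, it is unitary; replacing $W$ by the partial isometry from its polar decomposition (a compact perturbation) I may assume $W$ is a Fredholm partial isometry. Write $V_k^i := \Phi_i(P_k)(H)$, so $\dim V_k^i = n_k^{\Phi_i}$. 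Since $\ker W$ is finite-dimensional there is $N_0$ with $V_k^2 \cap \ker W = 0$ for every $k \geq N_0$; for such $k$ the operator $WP_{V_k^2}W^*$ is positive with range $W(V_k^2)$ and rank $n_k^{\Phi_2}$.

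Assume toward contradiction that $\vec{n}_{\Phi_1}$ and $\vec{n}_{\Phi_2}$ are not eventually equal. By swapping $\Phi_1 \leftrightarrow \Phi_2$ (and $W \leftrightarrow W^*$) if necessary, I may assume $A := \{k \geq N_0 : n_k^{\Phi_1} > n_k^{\Phi_2}\}$ is infinite. For each $k \in A$, rank-nullity applied to $P_{W(V_k^2)}|_{V_k^1} \colon V_k^1 \to W(V_k^2)$ forces $\dim(V_k^1 \cap W(V_k^2)^\perp) \geq n_k^{\Phi_1} - n_k^{\Phi_2} \geq 1$, so I pick a unit $v_k$ in that intersection. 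The families $\{P_{V_k^1}\}_k$ and $\{P_{V_k^2}\}_k$ converge to the identity strongly, so for any fixed compact operator $B$ both $\|BP_{V_k^1}\|$ and $\|P_{V_k^2}B\|$ tend to $0$ as $k \to \infty$. Inductively I would extract an infinite thinning $A' = \{k_1 < k_2 < \cdots\} \subseteq A$, choosing at stage $n$ the index $k_n$ large enough and then the unit $v_{k_n} \in V_{k_n}^1 \cap W(V_{k_n}^2)^\perp$, so that
\[
\|P_{V_{k_n}^2} W^* v_{k_m}\| < 2^{-n} \quad\text{and}\quad \|P_{V_{k_m}^2} W^* P_{V_{k_n}^1}\| < 2^{-n} \quad \text{for every } m < n.
\]
Combining the two conditions, $\|P_{V_{k_m}^2} W^* v_{k_n}\| < 2^{-\max(m,n)}$ whenever $m \neq n$.

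Now the contradiction drops out. Setting $T := \Phi_1(p_{A'}) - W\Phi_2(p_{A'})W^* = \sum_{k \in A'}(P_{V_k^1} - WP_{V_k^2}W^*)$, which is compact by hypothesis, the condition $v_k \perp W(V_k^2)$ gives $P_{V_k^2}W^* v_k = 0$ by taking adjoints, so $(P_{V_k^1} - WP_{V_k^2}W^*)v_k = v_k$; for $k' \in A' \setminus \{k\}$, $v_k \in V_k^1 \perp V_{k'}^1$ gives $(P_{V_{k'}^1} - WP_{V_{k'}^2}W^*)v_k = -WP_{V_{k'}^2}W^* v_k$ of norm at most $\|P_{V_{k'}^2}W^* v_k\|$. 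Summing,
\[
\|Tv_{k_n} - v_{k_n}\| \leq \sum_{m \neq n} \|P_{V_{k_m}^2} W^* v_{k_n}\| \leq n\cdot 2^{-n} \to 0,
\]
whence $\|Tv_{k_n}\| \to 1$. But $\{v_{k_n}\}$ is orthonormal (lying in pairwise-orthogonal $V_{k_n}^1$), hence weakly null, and compactness of $T$ forces $\|Tv_{k_n}\| \to 0$, the sought contradiction. The main obstacle is the a priori absence of orthogonality between $V_k^1$ and $W(V_j^2)$ for $k \neq j$, which uncontrolled would let the off-diagonal terms $WP_{V_{k_m}^2}W^* v_{k_n}$ accumulate and mask the non-compactness I am trying to expose; this is exactly what the diagonal thinning neutralizes, by exploiting that orthogonal projection sequences going to $0$ strongly also tend to $0$ in norm when composed with any fixed compact operator.
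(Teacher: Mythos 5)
Your argument is correct and is essentially the same as the paper's: both reduce to a single Fredholm partial isometry $W$ with $\Phi_1 \sim_{\cK(H)} W\Phi_2 W^*$, exploit the rank discrepancy to pick a unit vector in the range of $\Phi_1(P_k)$ orthogonal to $W$ applied to the range of $\Phi_2(P_k)$, thin the index set using strong continuity to tame the off-diagonal terms, and then contradict compactness via the resulting orthonormal (hence weakly null) sequence; the paper keeps the cross-terms below a fixed $1/2$ while you push them below a geometric tail, but the logic is identical. One slip worth fixing: the projections $\Phi_i(P_k)$ converge strongly to $0$, not to the identity, which is precisely what makes your estimates $\|BP_{V_k^1}\| \to 0$ and $\|P_{V_k^2}B\| \to 0$ for compact $B$ true.
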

\begin{proof}
The proof goes by contradiction.
Since $q(v_1)$ and $q(v_2)$ are unitaries in $\cQ(H)$, 
we can assume that $v_1$ is the identity, and we denote $v_2$ by $v$. Let $\vec{n}_{\Phi_1} = \set{n_k}_{k \in \N}$, $\vec{n}_{\Phi_2} = \set{m_k}_{k \in \N}$ and
suppose there is an infinite $X \subseteq \N$ such that $n_k  > m_k$ for every $k \in X$.

We quickly sketch the idea of the proof first. Since
\[
{\text{rk}(\Phi_1(P_{n_k})) > \text{rk}(\Phi_2(P_{n_k}}))\ge
\text{rk}(\text{Ad}(v)  (\Phi_2(P_{n_k}))),
\]
it is always possible to find a norm-one vector 
$\zeta_k$ in the image of $\Phi_1(P_{n_k})$ which is also in the kernel of $\text{Ad}(v)  (\Phi_2(P_{n_k}))$. Moreover, for every $j \in \N$ big enough, the vector $\zeta_k$ is, up to a small error, in the complement
of the range of $\text{Ad}(v)  (\Phi_2(P_{n_j}))$.
Because of this,
by choosing an infinite subset $Y \subseteq X$ whose elements are spaced far enough apart,
we can find an infinite sequence of orthonormal vectors $\{ \zeta_k \}_{k \in \N}$
which all belong to $\Phi_1(P_Y)$, but such that $\lVert \text{Ad}(v)( \Phi_2(P_Y)) \zeta_k \rVert \le 1/2$
for every $k \in \N$. This contradicts the assumption $ \Phi_1 \sim_{\cK(H)}
\text{Ad}(v) \circ  \Phi_2$.

We inductively define the sets
$Y \subseteq X$ and $\{\zeta_k \}_{k \in \N}$ as follows.
Let $y_0$ be the minimum of $X$ and let $Y_0 = \set{y_0}$. Since
$\text{rk}(\Phi_1(P_{y_{0}})) > \text{rk}(\Phi_2(P_{y_{0}}))\ge
\text{rk}(\text{Ad}(v)  (\Phi_2(P_{y_{0}})))$, there is a norm-one vector
$\zeta_{0}$ in the image of $\Phi_1(P_{y_{0}})$ which also belongs to the kernel 
of $\text{Ad}(v) (\Phi_2(P_{y_{0}}))$. This is the case since the codimension of $\text{ker}
(\text{Ad}(v) ( \Phi_2(P_{y_{0}})))$ is strictly smaller than $\text{rk}(\Phi_1(P_{y_{0}}))$.

Suppose $Y_k = \set{y_0 < \dots < y_k}\subseteq X$ and that,
for every $h \le k$, there is a norm-one vector $\zeta_h$ such that $\Phi_1(P_{y_h})\zeta_h =
\zeta_h$ and $\lVert \text{Ad}(v)  (\Phi_2(P_{Y_k}))
\zeta_h \rVert < 1/2$. Let $y_{k+1}$ be the smallest element in $X$ greater than $y_k$ such that
\begin{enumerate}
\item \label{itema} $\lVert \text{Ad}(v) ( \Phi_2(P_{Y_k})) \Phi_1(P_{y_{k+1}}) \rVert < 1/2$,
\item \label{itemb} $\lVert \text{Ad}(v)  (\Phi_2(P_{Y_k \cup \set{y_{k+1}}})) \zeta_h \rVert < 1/2$
for every $h \le k$. 
\end{enumerate}
Such number
$y_{k+1}$ exists since all $\text{Ad}(v) (\Phi_2(P_{Y_k}))$ and the projection
onto $\text{span}\set{v^*\zeta_h : h \le k}$ have finite rank and, by strong continuity,
the sequences $\set{\Phi_1(P_k)}_{k \in \N}$ and $\set{\Phi_2(P_k)}_{k \in \N}$
strongly converge to zero. Define $Y_{k+1} = Y_k \cup \set{y_{k+1}}$.

We have to verify that $Y_{k+1}$ satisfies
the inductive hypothesis, namely
that for every $h \le k+1$ there is a norm-one vector $\zeta_h$ such that $\Phi_1(P_{y_h})\zeta_h =
\zeta_h$ and $\lVert  \text{Ad}(v) (\Phi_2(P_{Y_{k+1}}))\zeta_h
\rVert < 1/2$. For $h \le k$, pick the $\zeta_h$ given by
the inductive hypothesis, and the inequality follows
by item \eqref{itemb}.
Since $y_{k+1} \in X$, it follows that $\text{rk}(\Phi_1(P_{y_{k+1}})) > \text{rk}(\Phi_2(P_{y_{k+1}}))\ge
\text{rk}(\text{Ad}(v)( \Phi_2(P_{y_{k+1}})))$.
There exists thus a norm-one vector
$\zeta_{k+1}$ in the image of $\Phi_1(P_{y_{k+1}})$ which also belongs to the kernel 
of $	\text{Ad}(v)( \Phi_2(P_{y_{k+1}}))$.

Because of this and item \eqref{itema}:
\begin{multline*}
\lVert  \text{Ad}(v) ( \Phi_2(P_{Y_{k+1}}) )\zeta_{k+1} \rVert =
\lVert \text{Ad}(v)  (\Phi_2(P_{Y_k})) \zeta_{k+1} \rVert  \le \\ \le \lVert \text{Ad}(v)
 (\Phi_2(P_{Y_k}))  \Phi_1(P_{y_{k+1}}) \rVert  < 1/2.
\end{multline*}
Let $Y = \cup_{k \in \N} Y_k$. We show that, for every $k \in \N$, the following holds
\[
\lVert (\Phi_1 (P_Y) -  \text{Ad}(v) (\Phi_2 (P_Y))) \zeta_k \rVert \ge 1/2,
\]
which contradicts $\Phi_1(P_Y) \sim_{\mathcal{K}(H)} \text{Ad}(v)( \Phi_2(P_Y))$.
The previous
inequality follows since, for every $k \in \N$, by strong continuity of $\Phi_1$ we have that
$\Phi_1 (P_Y) \zeta_k = \zeta_k$,
and by strong continuity of $\Phi_2$ we have that
\[
\lVert \text{Ad}(v) ( \Phi_2(P_Y)) \zeta_k \rVert \le 1/2.
\]
\end{proof}

\begin{lemma} \label{prop:const}
Let $\phi: \cQ(H) \to \cQ(H)$ be a unital, locally trivial endomorphism. There exists $m \in \N$ such that,
for every $\vec{E}$, the sequence $\vec{n}_{\Phi_{\vec{E}}} = \set{\text{rk}(\Phi_{\vec{E}}
(P_k))}_{k \in \N}$ is, up to a finite number of entries, constantly equal to $m$.
\end{lemma}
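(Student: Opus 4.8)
The plan is to combine Lemma \ref{lemma:seq} with the fact that any two partitions $\vec E, \vec F \in \text{Part}_\N$ can be "interleaved" by a third one, so that the eventually-constant behaviour of $\vec n_{\Phi_{\vec E}}$ on one partition can be transported to another. First I would show that for a \emph{single} partition $\vec E$ the sequence $\vec n_{\Phi_{\vec E}}$ is eventually constant. Recall that $n_i = n_j$ whenever $i,j$ lie in the same block $E_n$, so $\vec n_{\Phi_{\vec E}}$ is constant on each block; write $m_n$ for its value on $E_n$. Consider the partition $\vec E^{\text{even}}$ and a strongly continuous unital lift $\text{Ad}(v_{\vec E^{\text{even}}}) \circ \Phi_{\vec E^{\text{even}}}$ of $\phi \restriction \cD[\vec E^{\text{even}}]_\cQ$. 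Since $\ell^\infty \subseteq \cD[\vec E] \cap \cD[\vec E^{\text{even}}]$ (the atomic masa sits inside every $\cD[\vec F]$), both $\text{Ad}(v_{\vec E}) \circ \Phi_{\vec E} \restriction \ell^\infty$ and $\text{Ad}(v_{\vec E^{\text{even}}}) \circ \Phi_{\vec E^{\text{even}}} \restriction \ell^\infty$ are lifts of $\phi \restriction \ell^\infty_\cQ$, hence they agree modulo $\cK(H)$. By Lemma \ref{lemma:seq}, $\vec n_{\Phi_{\vec E}}$ and $\vec n_{\Phi_{\vec E^{\text{even}}}}$ are eventually equal; the same holds with $\vec E^{\text{odd}}$. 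But the blocks of $\vec E^{\text{even}}$ and $\vec E^{\text{odd}}$ are shifted relative to those of $\vec E$: a block $E_n$ with $n \ge 1$ gets glued to $E_{n-1}$ in one of the two and to $E_{n+1}$ in the other. Since $\vec n_{\Phi_{\vec E}}$ is eventually equal to $\vec n_{\Phi_{\vec E^{\text{even}}}}$ (which is constant on $E_{2n}\cup E_{2n+1}$) and eventually equal to $\vec n_{\Phi_{\vec E^{\text{odd}}}}$ (which is constant on $E_{2n+1}\cup E_{2n+2}$), for large $n$ we get $m_n = m_{n+1}$, i.e. $\vec n_{\Phi_{\vec E}}$ is eventually constant, equal to some $m_{\vec E} \in \N$.

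Next I would show $m_{\vec E}$ does not depend on $\vec E$. Given $\vec E, \vec F \in \text{Part}_\N$, I would produce a common coarsening — or rather, it is enough to pass to a partition $\vec G$ such that $\cD[\vec E] \cup \cD[\vec F] \subseteq$ (something close to) $\cD[\vec G]$; concretely, one can build $\vec G \in \text{Part}_\N$ whose blocks each contain a full block of $\vec E$ and a full block of $\vec F$, so that infinitely many blocks of $\vec G$ contain a block of $\vec E$ and, separately, infinitely many contain a block of $\vec F$. Comparing the $\ell^\infty$-lifts coming from $\vec E$ and from $\vec G$ via Lemma \ref{lemma:seq} forces $\vec n_{\Phi_{\vec E}}$ and $\vec n_{\Phi_{\vec G}}$ to be eventually equal, so $m_{\vec E} = m_{\vec G}$; likewise $m_{\vec F} = m_{\vec G}$, whence $m_{\vec E} = m_{\vec F}$. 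Setting $m := m_{\vec E}$ (this common value) finishes the proof.

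The main obstacle, I expect, is the bookkeeping in the construction of the interleaving partition $\vec G$: one must ensure that $\vec G$ refines the relevant $\ell^\infty$-structure so that Lemma \ref{lemma:seq} genuinely applies (the hypothesis there is just about two strongly continuous $*$-homomorphisms on $\ell^\infty$ agreeing modulo $\cK(H)$ after conjugation, which is automatic here because both are lifts of $\phi \restriction \ell^\infty_\cQ$), and simultaneously that $\vec G$ has infinitely many blocks each containing a block of $\vec E$ (so the "eventually constant" value for $\vec E$ is visible along $\vec G$) and infinitely many containing a block of $\vec F$. A clean way to avoid coarsening subtleties is to note that one never actually needs $\cD[\vec E] \subseteq \cD[\vec G]$ as algebras — only the $\ell^\infty$ comparison matters — so $\vec G$ can be taken to be \emph{any} partition finer than the "max" of $\vec E$ and $\vec F$ in the obvious sense, and the rest is the same eventual-equality argument as in the single-partition case. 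The even/odd trick is exactly the device from \cite{inner} that converts "eventually equal along a shifted partition" into "eventually constant", and I would lean on it in both halves of the argument.
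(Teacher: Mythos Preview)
Your argument is correct and uses essentially the same idea as the paper (the even/odd shift combined with Lemma \ref{lemma:seq}), but your second half is an unnecessary detour. Since $\ell^\infty \subseteq \cD[\vec E]$ for \emph{every} $\vec E \in \text{Part}_\N$, Lemma \ref{lemma:seq} applies directly to any pair $\vec E, \vec F$: the maps $\text{Ad}(v_{\vec E})\circ\Phi_{\vec E}\restriction \ell^\infty$ and $\text{Ad}(v_{\vec F})\circ\Phi_{\vec F}\restriction \ell^\infty$ are both lifts of $\phi\restriction \ell^\infty/c_0$, so $\vec n_{\Phi_{\vec E}}$ and $\vec n_{\Phi_{\vec F}}$ are already eventually equal with no interleaving partition $\vec G$ needed. (You essentially say this yourself in the parenthetical remark, which makes the construction of $\vec G$ redundant.) The paper exploits this observation from the start: it first notes that it suffices to exhibit a \emph{single} partition with eventually constant $\vec n$-sequence, and then runs your parity argument on the two concrete partitions $\{[2k,2k+1]\}_k$ and $\{[2k+1,2k+2]\}_k$ rather than on $\vec E^{\text{even}}, \vec E^{\text{odd}}$ for a general $\vec E$.
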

\begin{proof}
By Lemma \ref{lemma:seq}, it is enough to show that there exists a partition $\vec{E} \in  \text{Part}_\N$ such that
$\vec{n}_{\Phi_{\vec{E}}}$ is eventually constant.
Let  $\vec{E}_1$ be the partition composed by the intervals $\set{2k,2k+1}$ and $\vec{E}_2$ be the partition
composed by $\set{2k+1, 2k+2}$, as $k$ varies in $\N$. Let $\vec{n}_{\Phi_{\vec{E_1}}} =
\set{n_k}_{k \in \N}$ and $\vec{n}_{\Phi_{\vec{E_2}}} = \set{m_k}_{k \in \N}$. On the one hand we have
that $n_{2k} = n_{2k+1}$ and $m_{2k+1} = m_{2k+2}$ for all $k \in \N$, since these couple of numbers
belong to the same intervals in $\vec{E}_1$ and $\vec{E}_2$ respectively.
On the other hand, since by assumption both $\text{Ad}(v_{\vec{E_1}}) \circ \Phi_{\vec{E_1}}$ and
$\text{Ad}(v_{\vec{E_2}}) \circ \Phi_{\vec{E_2}}$ lift $\phi$ on $\ell^\infty/c_0$, it follows that
$\text{Ad}(v_{\vec{E_1}}) \circ \Phi_{\vec{E_1}} \sim_{\cK(H)} \text{Ad}(v_{\vec{E_2}}) \circ \Phi_{\vec{E_2}}$ on $\ell^\infty$. Remember moreover that we assume
that $\Phi_{\vec{E_1}}$ and $\Phi_{\vec{E_2}}$ are unital (see Remark \ref{remark:uni}). Thus, by Lemma
\ref{lemma:seq}, there is $j \in \N$ such that $n_i = m_i$ for all $i \ge j$. It follows that
there is $m \in \N$ such that $n_i = m_i = m$ for all $i \ge j$.
 \end{proof}
 
 Let $\Phi: \cD[\vec{E}] \to \cB(H)$ be a strongly continuous, unital $*$-homomorphism such that
 $\vec{n}_\Phi$ is eventually constant with value $m$.
 This is not enough to infer that, even up to a unitary transformation of $H$,
 $\Phi$ is a compact perturbation of $\Phi_m$. For instance, the map $\Psi: \ell^\infty \to \cB(H)$
 sending $(a_0, a_1, a_2, \dots) \mapsto (a_0, a_0, a_1, a_2, \dots)$ is not a compact
 perturbation of the identity, since it is a compact perturbation of $\text{Ad}(S)$, being $S$ the unilateral
 shift. Nevertheless, by suitably `shifting' $\Phi$ it is possible to obtain a compact
 perturbation of $\Phi_m$, as shown in the following lemma.

\begin{lemma} \label{lemma:shift}
Let $\vec{E} \in  \text{Part}_\N$ and let $\Phi: \cD[\vec{E}] \to \cB(H)$ be a strongly continuous,
unital $*$-homomorphism such that $\Phi[\cD[\vec{E}] \cap \cK(H)] \subseteq \cK(H)$ and such that
$\vec{n}_\Phi$ is eventually constant with value $m \in \N \setminus \set{0}$.
There exists a partial isometry $w$ of finite index such that
$\text{Ad}(w) \circ \Phi \sim_{\mathcal{K}(H)} \Phi_m$.
\end{lemma}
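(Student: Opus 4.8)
The plan is to reduce $\Phi$ to $\Phi_m$ by exhibiting a partial isometry that, block by block along the partition $\vec E$, identifies the range of $\Phi(P_{E_n})$ with the appropriate $m|E_n|$-dimensional coordinate subspace, and then to absorb the finitely many ``wrong'' initial blocks into a finite-index correction. First I would fix $N$ so that $\mathrm{rk}(\Phi(P_k)) = m$ for all $k \ge N$; write $\vec E = \{E_n\}_{n\in\N}$ and let $n_0$ be least with $\min E_{n_0} \ge N$. For $n \ge n_0$ the projection $\Phi(P_{E_n})$ has rank exactly $m|E_n|$. The key observation is that, by strong continuity, $\Phi$ restricted to $\cD[\vec E]$ is (after identifying $\cD[\vec E]\cong\prod_n M_{|E_n|}(\C)$) determined by a sequence of unital $*$-homomorphisms $M_{|E_n|}(\C)\to \cB(\mathrm{ran}\,\Phi(P_{E_n}))$, and every unital $*$-homomorphism $M_{|E_n|}(\C)\to M_{m|E_n|}(\C)$ is, up to a unitary of $M_{m|E_n|}(\C)$, the map $x\mapsto x\otimes 1_m$. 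So on each block there is a unitary $u_n$ from $\mathrm{ran}\,\Phi(P_{E_n})$ onto $\mathrm{span}\{\xi_k\otimes e_j : k\in E_n,\ j<m\}$ (under the fixed isomorphism $H\cong H\otimes\C^m$) intertwining $\Phi\restriction M_{|E_n|}(\C)$ with $\Phi_m\restriction M_{|E_n|}(\C)$.

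Next I would assemble the $u_n$ for $n\ge n_0$ into a single partial isometry $w_0$ with initial space $\bigvee_{n\ge n_0}\mathrm{ran}\,\Phi(P_{E_n}) = \mathrm{ran}\,\Phi(P_{[\,\min E_{n_0},\,\infty)})$ and final space the corresponding coordinate subspace of $H\otimes\C^m$; then $\mathrm{Ad}(w_0)\circ\Phi$ agrees with $\Phi_m$ exactly on $\cD_{[n_0,\infty)}[\vec E]$. The complementary finite-dimensional piece is handled by hand: $\Phi(P_{E_n})$ for $n<n_0$ has some finite rank $r_n$, the total discrepancy $\sum_{n<n_0} r_n - \sum_{n<n_0} m|E_n|$ is a finite integer, and one extends $w_0$ to a partial isometry $w$ of finite index by choosing any unitary identification of $\mathrm{ran}\,\Phi(P_{[0,\min E_{n_0})})$ with a suitable coordinate subspace (splicing in or deleting finitely many basis vectors as needed). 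Since $w$ and $\Phi_m$ differ from $\Phi$ only on the finite-rank corner $P_{[0,\min E_{n_0})}$ and its image, $\mathrm{Ad}(w)\circ\Phi$ and $\Phi_m$ agree on $\cD_{[n_0,\infty)}[\vec E]$ and both send the complementary corner to operators of finite rank, so $\mathrm{Ad}(w)\circ\Phi(T) - \Phi_m(T) = \mathrm{Ad}(w)\circ\Phi(P_{<n_0}TP_{<n_0}) - \Phi_m(P_{<n_0}TP_{<n_0})$ has finite rank for every $T\in\cD[\vec E]$, uniformly bounded by $\|T\|$ times the rank of that corner; hence $\mathrm{Ad}(w)\circ\Phi\sim_{\cK(H)}\Phi_m$.

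The main obstacle is bookkeeping the block-diagonal unitaries $u_n$ so that their direct sum really is a single (possibly non-unital) partial isometry of $\cB(H)$ with the claimed initial and final spaces, and verifying that the resulting index is finite: one must check that $\sum_{n\ge n_0}\mathrm{rk}\,\Phi(P_{E_n})$ and $\sum_{n\ge n_0} m|E_n|$ agree co-$n_0$-finitely (which they do, being equal for every $n\ge n_0$), so that the only index contribution comes from the finite front corner. There is a minor subtlety that the uniqueness of unital $*$-homomorphisms $M_k(\C)\to M_{mk}(\C)$ up to unitary equivalence should be invoked with the same multiplicity $m$ on each block — this is exactly where the hypothesis that $\vec n_\Phi$ is \emph{eventually constant with value $m$} is used, together with the already-noted fact (Remark \ref{remark:sc}) that strong continuity forces $\Phi$ to be diagonal of constant multiplicity on the tail. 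Everything else is the routine diagonalization of a $*$-homomorphism between finite matrix algebras.
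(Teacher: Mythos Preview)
Your argument is correct and rests on the same core observation as the paper's: on each tail block $E_n$ with $n\ge n_0$, the restriction of $\Phi$ is a unital $*$-homomorphism $M_{|E_n|}(\C)\to M_{m|E_n|}(\C)$, hence unitarily conjugate to $x\mapsto x\otimes 1_m$, and the finitely many front blocks contribute only a finite-rank error. The organization differs slightly. The paper proceeds in two stages: it first applies a power $S^r$ of a unilateral shift (with $r$ equal to the dimension defect $m\cdot\min E_h - \sum_{k<\min E_h}\mathrm{rk}\,\Phi(P_k)$) to align the codimensions of the tail pieces, and then invokes the general fact that any two strongly continuous unital $*$-homomorphisms with identical rank sequence $\vec n$ are \emph{exactly} unitarily conjugate. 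You instead build the intertwining partial isometry in one step, as the direct sum $\bigoplus_{n\ge n_0} u_n$ of the blockwise unitaries, and absorb the front into the compact error rather than repairing it with a shift. Your route is marginally more direct and avoids the bookkeeping with $S^r$ and the auxiliary embedding $\Psi$; the paper's route has the mild advantage of making the Fredholm index of $w$ explicit (it is $r$), which is not needed for the lemma but clarifies where the ``shift'' in the lemma's name comes from.
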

\begin{proof}
If two strongly continuous, unital embeddings $\Phi_1, \Phi_2: \cD[\vec{E}] \to \cB(H)$ are such that
$\vec{n}_{\Phi_1} =  \vec{n}_{\Phi_2}$, then it is always possible to find a unitary $u \in \cB(H)$ sending
$\Phi_2(P_k)H$ to $\Phi_1(P_k)H$
for every $k \in \N$ such that $\text{Ad}(u) \circ  \Phi_1 = \Phi_2$. Because of this,
if a strongly continuous, unital $*$-homomorphism $\tilde \Phi :\cD[\vec{E}] \to \cB(H)$ is such
that $\vec{n}_{ \tilde{\Phi}}$ is the constant sequence with value $m$, then there is a unitary $u' \in
\cB(H)$ such that $\text{Ad}(u') \circ \tilde \Phi \sim_{\mathcal{K}(H)} \Phi_m$.
In view of this, to prove the lemma
it is enough to show that
there is a partial isometry $w$ of finite index and a strongly continuous, unital $*$-homomorphism
$\tilde{\Phi}: \cD[\vec{E}] \to \cB(H)$ such that $\tilde{\Phi} \sim_{\cK(H)}
\text{Ad}(w) \circ \Phi$ and such that $\vec{n}_{ \tilde{\Phi}}$ is constantly equal to $m$.

Let
$\vec{n}_\Phi = \set{n_k}_{k \in \N}$. By assumption, there is
$h \in \N$ such that for all $k \in E_j$ with $j \ge h$, we have
$n_k = m$.
Let $n = \sum_{ k < \min{E_h}} n_k$ and let $r = m\cdot \min{E_h} - n$.
The value $r$ tells us how much we need to `shift' $\Phi$ in order to make it, up to unitary equivalence, a compact perturbation of $\Phi_m$.

More concretely, set $Q:= \sum_{j < h} P_{E_j}$. The corner $Q\cB(H) Q$ is the portion
of $\cB(H)$ containing all projections $P_k$ such that $\text{rk}(\Phi(P_k)) \not = m$,
hence it is the corner where we want to modify $\Phi$.
Since $P_{E_j}$ belongs to the centre of $\cD[\vec E]$ for every $j \in \N$, we also obtain
that $Q$ is a central element of $\cD[\vec E]$.

Fix $\set{\zeta_i}_{i \in \N}$ an orthonormal basis
of $H$ such that
\begin{equation} \label{eq:basis}
\set{\zeta_i}_{i < n} \text{ is an orthonormal basis of } \Phi(Q)H.
\end{equation}
Let $P$ be the orthogonal projection onto the space spanned by $\set{\zeta_i}_{i < n + r}$,
and let $\Psi$ be a unital embedding of $Q \cB(H) Q \cong M_{\bar k}(\C)$ to
$P \cB(H) P \cong M_{m \bar k}(\C)$, where $\bar k = \min{E_h}$, so $n + r = \bar k m$. The idea at this point is to
define a $\tilde \Phi$ which behaves like $\Psi$ on $Q\cB(H) Q$, and like a
shift by $r$ of $\Phi$ on $(1- Q) \cB(H) (1 - Q)$, so that $\tilde \Phi$ is a strongly continuous,
unital $*$-homomorphism with $n_{\tilde \Phi}$ constantly equal to $m$.

This is concretely done as follows. Let $S$ be the unilateral
shift sending $\zeta_i$ to $\zeta_{i+1}$.
We have that
\begin{equation} \label{eq:proj}
S^{-r}S^r \ge \Phi(1-Q),
\end{equation}
since the right-hand side of the inequality is the projection onto the space spanned by $\{ \zeta_i \}_{i \ge n}$ by \eqref{eq:basis},
whereas the  left-hand side is either the identity (if $r \ge 0$) or
the orthogonal projection onto $\text{span}\set{\zeta_i}_{i \ge -r}$ (if $r \le 0$), and by definition
$-r = n - m \cdot \min{E_h} \le n$.

By \eqref{eq:proj} the operator $\text{Ad}(S^r \Phi(1-Q))$ is a projection, moreover it is exactly the projection onto the space spanned by $\set{\zeta_i}_{i \ge n + r}$,
that is $\text{Ad}(S^r \Phi(1-Q)) = 1- P$. This is the case since the range of $\Phi(1 -Q)$
is spanned by $\{ \zeta_i \}_{i \ge n}$ by \eqref{eq:basis}, and applying $\text{Ad}(S^r)$ to it shifts
its range forward by $r$.

As anticipated, let $\tilde{\Phi} := (\Psi \circ \text{Ad}(Q)) \oplus (\text{Ad}(S^{r} \Phi (1-Q)) \circ \Phi)$. The map
$\tilde{\Phi}$ is clearly strongly continuous. The fact that $\tilde \Phi$ is unital
follows from $\Psi(Q) = P$ and the observations in the previous paragraph:
\begin{align*}
\tilde \Phi (1) &= \Psi(Q) \oplus \text{Ad}(S^r \Phi(1-Q))(1) \\
&= \Psi(Q) \oplus
\text{Ad}(S^r \Phi(1-Q))
\\
&= P \oplus (1 - P) = 1.
\end{align*}
Multiplicativity of $\tilde \Phi$ follows since $S^{-r}S^r \ge \Phi(1 - Q)$ and $Q$ commutes with every element in
$\cD[\vec{E}]$.

Finally, for $k < \min{E_h}$ we have that $\tilde \Phi(P_k) = \Psi(P_k)$, which has rank $m$
by definition of $\Psi$. For $k \ge \min{E_h}$ we have $\tilde \Phi(P_k)= S^r \Phi(P_k) S^{-r}$, whose
rank is $m$ since $\text{rk}(\Phi(P_k)) = m$ and since applying to it
$\text{Ad}(S^r)$ does not change the rank, as $S^r$
is injective on the range of $\Phi(1-Q)$ (which contains the range of $\Phi(P_k)$ for $k \ge \min{E_h}$)
by \eqref{eq:proj}. This proves that
$\vec{n}_{\tilde{\Phi}}$ is constantly
equal to $m$.
The proof is thus concluded since we proved that
$\tilde{\Phi} \sim_{\cK(H)} \text{Ad}(S^r) \circ \Phi$.
\end{proof}

The following lemma, an analogue of \cite[Lemma 1.4]{inner}, shows
that unital, locally trivial embeddings which lift to $\Phi_m$ on $\ell^\infty/c_0$ have
nice and regular lifts also on the other $\cD[\vec{E}]_\cQ$'s.

\begin{lemma} \label{lemma:uni}
Let $\phi: \cQ(H) \to \cQ(H)$ be a unital, locally trivial endomorphism such that $\Phi_m$ lifts
$\phi$ on $\ell^\infty/ c_0$. Then, for every partition $\vec{E} \in  \text{Part}_\N$, there exists a unitary $u_{\vec{E}}$
in $\Phi_m[\ell^\infty]' \cap \cB(H) \cong \ell^\infty(M_m(\C))$
such that $\text{Ad}(u_{\vec{E}}) \circ \Phi_m$ lifts $\phi_{\vec{E}}$ on $\cD[\vec{E}]_\cQ$.
\end{lemma}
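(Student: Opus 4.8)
The plan is to fix a partition $\vec E \in \text{Part}_\N$ and build $u_{\vec E}$ from the data $v_{\vec E}, \Phi_{\vec E}$ attached to $\phi$ at the start of this section, feeding it through Lemma \ref{prop:const} and Lemma \ref{lemma:shift}. First note that, since $\text{Ad}(v_{\vec E}) \circ \Phi_{\vec E}$ lifts $\phi \restriction \cD[\vec E]_\cQ$ (hence kills $\cD[\vec E] \cap \cK(H)$ after applying $q$), a routine check shows $\Phi_{\vec E}$ itself sends $\cD[\vec E] \cap \cK(H)$ into $\cK(H)$; thus $\vec n_{\Phi_{\vec E}}$ is defined, and by Lemma \ref{prop:const} it is eventually constant with the same value $m$ occurring in the hypothesis (with $m \ge 1$, as $\Phi_m$ is unital). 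Lemma \ref{lemma:shift} then yields a finite-index partial isometry $w$ with $\text{Ad}(w) \circ \Phi_{\vec E} \sim_{\cK(H)} \Phi_m$ on $\cD[\vec E]$. Since $\Phi_{\vec E}$ is unital (Remark \ref{remark:uni}) and $q(v_{\vec E}), q(w)$ are unitaries, applying $q$ gives, on $\cD[\vec E]$,
\[
\phi \circ q = \text{Ad}(q(v_{\vec E})) \circ (q \circ \Phi_{\vec E}) = \text{Ad}(q(v_{\vec E})\, q(w)^*) \circ (q \circ \Phi_m),
\]
where the first equality says $\text{Ad}(v_{\vec E}) \circ \Phi_{\vec E}$ lifts $\phi \restriction \cD[\vec E]_\cQ$ and the second substitutes $q \circ \Phi_{\vec E} = \text{Ad}(q(w)^*) \circ (q \circ \Phi_m)$, read off from Lemma \ref{lemma:shift}.

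Set $z := q(v_{\vec E} w^*)$, a unitary of $\cQ(H)$. Restricting the displayed identity to $\ell^\infty \subseteq \cD[\vec E]$ and using the hypothesis that $\Phi_m$ lifts $\phi$ on $\ell^\infty/c_0$, i.e. $\phi \circ q = q \circ \Phi_m$ on $\ell^\infty$, we get that $\text{Ad}(z)$ fixes $q[\Phi_m[\ell^\infty]]$ pointwise; that is, $z$ lies in the relative commutant of $q[\Phi_m[\ell^\infty]]$ in $\cQ(H)$. The one nontrivial ingredient is now that this relative commutant equals $q[\Phi_m[\ell^\infty]' \cap \cB(H)]$. Since, after identification, $\Phi_m[\ell^\infty]$ is the centre of $\cD[\vec F]$ for $\vec F$ the partition of $\N$ into consecutive blocks of length $m$, this is a Johnson--Parrott-type fact --- an operator commuting modulo $\cK(H)$ with all the projections $\Phi_m(P_M)$ is a compact perturbation of a $\vec F$-block-diagonal operator --- provable by the standard spreading-out argument for atomic masas (cf. \cite{inner}). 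Granting it, $z = q(a)$ for some $a \in \Phi_m[\ell^\infty]' \cap \cB(H) \cong \ell^\infty(M_m(\C))$; as $z$ is unitary, $a$ is invertible in all but finitely many matrix coordinates, and replacing each such block by the unitary of its polar decomposition (and the finitely many remaining blocks by $1_m$) produces a unitary $u_{\vec E} \in \Phi_m[\ell^\infty]' \cap \cB(H)$ with $q(u_{\vec E}) = z$.

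It then remains to verify $u_{\vec E}$ works: on $\cD[\vec E]$ we have
\[
q \circ (\text{Ad}(u_{\vec E}) \circ \Phi_m) = \text{Ad}(q(u_{\vec E})) \circ (q \circ \Phi_m) = \text{Ad}(z) \circ (q \circ \Phi_m) = \phi \circ q,
\]
and $\text{Ad}(u_{\vec E}) \circ \Phi_m$ is strongly continuous and sends $\cD[\vec E] \cap \cK(H)$ into $\cK(H)$ (since $\Phi_m(T) = T \otimes 1_m$ and conjugation by a unitary preserves both strong continuity and compactness), so it lifts $\phi \restriction \cD[\vec E]_\cQ$, as wanted. I expect the only genuinely non-formal step to be the relative commutant identity for $q[\Phi_m[\ell^\infty]]$; the rest is bookkeeping with the lemmas of this section and Remark \ref{remark:sc}.
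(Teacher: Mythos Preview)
Your argument is correct and follows essentially the same route as the paper: obtain a finite-index $w$ via Lemma~\ref{lemma:shift}, deduce that the resulting unitary $z=q(v_{\vec E}w^*)$ essentially commutes with $\Phi_m[\ell^\infty]$, invoke Johnson--Parrott to find a representative in $\ell^\infty(M_m(\C))$, and correct it blockwise to a unitary. The only imprecision is your citation for the key relative-commutant step: $\Phi_m[\ell^\infty]$ is not an atomic masa when $m>1$, and the paper appeals to \cite[Theorem~2.1]{jp} rather than a spreading-out argument from \cite{inner}; you should cite Johnson--Parrott directly.
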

\begin{proof}
Fix a partition $\vec{E} \in  \text{Part}_\N$, let $v_{\vec{E}}$ be a partial isometry of finite index and let $\Phi_{\vec{E}}:
\cD[\vec{E}] \to \cB(H)$ be a strongly continuous, unital $*$-homomorphism such that
$\text{Ad}(v_{\vec{E}}) \circ \Phi_{\vec{E}}$ lifts $\phi$ on $\cD[\vec{E}]$,
which exist since $\phi$ is locally trivial.

By assumption we have that
$\text{Ad}(v_{\vec{E}}) \circ \Phi_{\vec{E}} \sim_{\mathcal{K}(H)} \Phi_m$ on $\ell^\infty$.
Thus, by Lemma \ref{lemma:seq}, the sequence $n_{\Phi_{\vec{E}}}$ is eventually constant with
value $m$. This in turn implies, by Lemma \ref{lemma:shift}, that
there is a finite index isometry $w$ such that $\text{Ad}(v_{\vec{E}})
\circ \Phi_{\vec{E}} \sim_{\mathcal{K}(H)}
\text{Ad}(w) \circ\Phi_m$ on $\cD[\vec{E}]$, hence the latter also lifts $\phi$ on $\cD[\vec{E}]_\cQ$.

Combining the fact that both
$\text{Ad}(v_{\vec{E}}) \circ \Phi_{\vec{E}} \sim_{\mathcal{K}(H)} \Phi_m$
and $\text{Ad}(v_{\vec{E}})
\circ \Phi_{\vec{E}} \sim_{\mathcal{K}(H)}
\text{Ad}(w) \circ\Phi_m$ on $\ell^\infty$, we obtain that $\text{Ad}(w)
\circ \Phi_m \sim_{\mathcal{K}(H)} \Phi_m$ on $\ell^\infty$. This entails that $w$ commutes,
up to compact operators, with the elements in $\Phi_m[\ell^\infty]$.
The commutant of $\Phi_m[\ell^\infty]$ is (isomorphic
to) $\ell^\infty(M_m(\C)) \subseteq \cB(H)$ and by \cite[Theorem 2.1]{jp} we have that $w$
is a compact perturbation of an element $u$ in $\ell^\infty(M_m(\C))$.

Let $u = v \lvert u \rvert$ be the polar decomposition of $u$, where $v$ is a partial isometry.
Notice that, since $\ell^\infty(M_m(\C))$
is a von Neumann algebra, $v$ belongs to $\ell^\infty(M_m(\C))$. Moreover,
as $q(u) = q(w) \in \cQ(H)$ is a unitary, it follows that $\lvert u \rvert \sim_{\cK(H)} 1$, thus
$v \sim_{\cK(H)} u \sim_{\cK(H)} w$. In particular $v$ is a partial isometry 
such that $q(v) = q(w)$ and
\begin{equation} \label{eq:pi}
v^*v \sim_{\cK(H)} vv^* \sim_{\cK(H)} 1.
\end{equation} 
Being $v =(v_n)_{n \in \N}$ a partial isometry, we have that $vv^*v = v$.
Thus $v_nv_n^*v_n = v_n$ for every $n \in \N$, that is each $v_n$ is a partial isometry in $M_m(\C)$.
As a consequence, both $v_n^*v_n$ and $v_nv_n^*$ are projections in $M_m(\C)$.
Formula \eqref{eq:pi} entails that for all but finitely many $n \in \N$ we have $v_n^* v_n = v_n
v_n^* = 1_{m}$, namely $v_n$ is a unitary in $M_m(\C)$. Thus, up to changing finitely
many entries of $v = (v_n)_{n \in \N}$, we obtain a unitary $u_{\vec{E}}$ in the commutant of $\Phi_m[\ell^\infty]$ such that
$u_{\vec{E}} \sim_{\cK(H)} w$. We have therefore that
\[
\text{Ad}(u_{\vec E}) \circ\Phi_m \sim_{\mathcal{K}(H)} \text{Ad}(w) \circ\Phi_m \sim_{\mathcal{K}(H)} \text{Ad}(v_{\vec{E}})
\circ \Phi_{\vec{E}}  ,
\]
on $\cD[\vec{E}]$, hence $\text{Ad}(u_{\vec E}) \circ\Phi_m$ lifts $\phi$ on $\cD[\vec E]_\cQ$.
\end{proof}

\section{All Endomorphisms are Trivial} \label{sctn:main}
We split the proof of Theorem \ref{thrm:main} in two steps. We first prove that all unital, locally
trivial endomorphisms of $\cQ(H)$ are trivial, then we show that all unital endomorphisms
of $\cQ(H)$ are locally trivial. We use OCA in both proofs. The non-unital case follows
from the unital one, since every endomorphism $\phi: \cQ(H) \to \cQ(H)$ can be thought as a unital
endomorphism with codomain $\cQ(\phi(1)H)$.

\subsection{Locally trivial endomorphisms are trivial} \label{4bis}
\begin{theorem} \label{thrm:main2}
Assume $OCA$.
Every unital, locally trivial endomorphism $\phi: \cQ(H) \to \cQ(H)$ is trivial.
\end{theorem}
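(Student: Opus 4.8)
The reductions of \S\ref{S3} bring $\phi$ into a normal form that reduces the statement to a single amalgamation problem. Since composing $\phi$ with an inner automorphism of $\cQ(H)$ affects neither local triviality nor triviality, we may assume, by Lemma \ref{prop:const} and Lemma \ref{lemma:shift}, that $\Phi_m$ lifts $\phi$ on $\ell^\infty/c_0$ for a fixed $m \in \N \setminus \set{0}$; Lemma \ref{lemma:uni} then supplies, for every $\vec{E} \in \text{Part}_\N$, a unitary $u_{\vec{E}}$ in $\Phi_m[\ell^\infty]' \cap \cB(H) \cong \ell^\infty(M_m(\C))$ with $\text{Ad}(u_{\vec{E}}) \circ \Phi_m$ lifting $\phi$ on $\cD[\vec{E}]_\cQ$. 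It now suffices to produce one unitary $u \in \cB(H)$ such that $\text{Ad}(u) \circ \Phi_m$ lifts $\phi$ on $\cD[\vec{E}]_\cQ$ for \emph{every} $\vec{E}$: given such a $u$, the map $\Phi := \text{Ad}(u) \circ \Phi_m \colon \cB(H) \to \cB(H)$ is a strongly continuous unital $*$-homomorphism with $\Phi[\cK(H)] \subseteq \cK(H)$, and for arbitrary $T \in \cB(H)$ Proposition \ref{prop:2qd} yields $\vec{E}$ and $T_0 \in \cD[\vec{E}^{\text{even}}]$, $T_1 \in \cD[\vec{E}^{\text{odd}}]$ with $T \sim_{\cK(H)} T_0 + T_1$, whence $q(\Phi(T)) = q(\Phi(T_0)) + q(\Phi(T_1)) = \phi(q(T_0)) + \phi(q(T_1)) = \phi(q(T))$; so $\Phi$ lifts $\phi$ and $\phi$ is trivial.

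The unitaries $\set{u_{\vec{E}}}$ form a coherent family: for any two partitions $\vec{E}, \vec{F}$ both $\text{Ad}(u_{\vec{E}}) \circ \Phi_m$ and $\text{Ad}(u_{\vec{F}}) \circ \Phi_m$ lift $\phi$ on $\cD[\vec{E}]_\cQ \cap \cD[\vec{F}]_\cQ$, which contains $\cD[\vec{G}]_\cQ$ for the common refinement $\vec{G} \in \text{Part}_\N$ of $\vec{E}$ and $\vec{F}$, so $q(u_{\vec{F}}^* u_{\vec{E}})$ commutes with $q(\Phi_m[\cD[\vec{G}]])$. Extracting a single global unitary from such a coherent family is exactly the obstruction that OCA eliminates in \cite{inner}, and the plan is to rerun that argument (see also \cite[\S17]{ilijasbook}) with $\Phi_m$ in place of the identity of $\cB(H)$ and $\ell^\infty(M_m(\C))$ in place of $\ell^\infty$. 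This has two phases. In the first (\emph{continuity}) one considers the separable metric space $X$ of finite approximations to the data $(\vec{E}, u_{\vec{E}})$ — an initial segment of a partition together with a finite string of $m \times m$ unitaries approximating $u_{\vec{E}}$ on it — with the open colouring whose part $K_0$ records the pairs of approximations exhibiting divergence beyond a fixed tolerance; one shows there is no uncountable $K_0$-homogeneous set, since it would produce uncountably many pairwise-far points in a separable space of $\cQ(H)$-valued data and contradict single-valuedness of $\phi$, the compression of operators into infinite-dimensional subspaces by partial isometries entering here just as in \cite[Lemma 4.1]{inner} and \cite[\S 17.5]{ilijasbook} (this is the point at which Martin's Axiom is avoided). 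OCA then covers $X$ by countably many $K_1$-homogeneous sets, so the local lifts are, on each of countably many pieces, uniformly determined. In the second phase (\emph{amalgamation}) the reflection argument of \cite{inner} patches the coherent family across these pieces into a bounded operator $w \in \cB(H)$ with $\text{Ad}(w) \circ \Phi_m$ lifting $\phi$ on every $\cD[\vec{E}]_\cQ$; restricting to $\ell^\infty$ forces $w$ to commute with $\Phi_m[\ell^\infty]$ modulo $\cK(H)$, so by \cite[Theorem 2.1]{jp} $w$ is a compact perturbation of an element of $\ell^\infty(M_m(\C))$, and taking the unitary part of its polar decomposition inside the von Neumann algebra $\ell^\infty(M_m(\C))$ and repairing finitely many coordinates — exactly as in the proof of Lemma \ref{lemma:uni} — yields the required unitary $u$.

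The main difficulty is the continuity phase. The algebraic machinery of \cite{inner} carries over essentially unchanged: the compression-by-partial-isometry device transfers because $\Phi_m$ intertwines compressions of $\cB(H)$ by subprojections with compressions of $\cB(H \otimes \C^m)$, and the operator produced by the amalgamation step is repaired to a unitary inside $\ell^\infty(M_m(\C))$ rather than $\ell^\infty$, which is harmless. What requires care is verifying that the colouring estimates of \cite{inner} forbidding an uncountable $K_0$-homogeneous set survive the replacement of scalar-valued finite approximations by $M_m(\C)$-valued ones; since $M_m(\C)$ is finite-dimensional the relevant metric-entropy and separability bounds degrade only by a factor depending on $m$, but this has to be checked at each step where the original argument counts, subdivides, or approximates along the diagonal masa.
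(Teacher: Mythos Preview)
Your reductions in the first paragraph and the overall plan are correct, and you rightly isolate the matrix-valued adaptation of the $\Delta$-calculus of \cite[\S3]{inner} as the point that needs checking (this is exactly what Lemmas \ref{lemma:delta}--\ref{lemma:double} do). The genuine gap is in your account of why there is no uncountable $K_0$-homogeneous set. The partial-isometry compression of \cite[Lemma~4.1]{inner} and \cite[\S17.5]{ilijasbook} belongs to the proof of Theorem \ref{thrm:lctr} (local triviality), not to this step; it plays no role in the ``locally trivial $\Rightarrow$ trivial'' argument, and the remark about avoiding Martin's Axiom applies to that other proof as well. The actual mechanism (Claim \ref{main:claim1}) is quite different: OCA implies $\mathfrak b\ge\aleph_2$, so an $\aleph_1$-sized $K_0^\epsilon$-homogeneous family $\mathcal H$ has its associated set of functions $\le^*$-bounded by a single $f$; one then fixes any $(f,u)\in\mathcal X$, observes that $\Phi^u$ lifts $\phi$ on every $\cD[\vec F^g]$ with $(g,v)\in\mathcal H$ because $\cD[\vec F^g]$ sits inside the algebra generated by $\cD[\vec F^{f,\text{even}}]\cup\cD[\vec F^{f,\text{odd}}]$, and Lemma \ref{lemma:cmpct} forces $\Delta_{F^g_n}(u,v)\to 0$. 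A pigeonhole plus separability then produces two elements of $\mathcal H$ that are $K_1^\epsilon$-related, a contradiction. Your proposed reason --- ``uncountably many pairwise-far points contradict single-valuedness of $\phi$'' --- is not an argument: the $u_{\vec E}$'s are lifts, not values of $\phi$, and there is no a priori obstruction to having uncountably many pairwise-far lifts.

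A minor point: the final repair via \cite[Theorem~2.1]{jp} and polar decomposition is unnecessary. In the paper the amalgamation (Claim \ref{main:claim2} and the construction of $v$ that follows it) is carried out entirely inside $\mathcal U(M_m(\C))^\N$, so the global unitary already lies in $\ell^\infty(M_m(\C))$ by construction.
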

Fix a unital endomorphism $\phi: \cQ(H) \to \cQ(H)$. The endomorphism $\phi$ is
trivial if and only if $\text{Ad}(v) \circ \phi$ is trivial
for some unitary $v \in \cQ(H)$. Because of this and by Lemma \ref{prop:const}, Lemma \ref{lemma:shift} and Lemma \ref{lemma:uni},
we can assume that there is $m \in \N$ such that $\phi$ lifts to $\Phi_m$ when
restricted to $\ell^\infty /c_0$, and that for every partition $\vec{E} \in  \text{Part}_\N$ there is a unitary $u_{\vec{E}}$
in $\ell^\infty(M_m(\C))$ such that $\text{Ad}(u_{\vec{E}}) \circ \Phi_m$ lifts $\phi$ on $\cD[\vec{E}]_\cQ$. Given a unitary $u \in \ell^\infty(M_m(\C))$ we denote $\text{Ad}(u) \circ \Phi_m$
by $\Phi^u$ (the endomorphism $\phi$, and therefore the integer $m$,
will be always fixed through this section, hence we omit $m$ in this notation).

The proof of Theorem \ref{thrm:main2} is inspired to \cite[Section 3]{inner},
where Theorem \ref{thrm:main2} is proved for an automorphism, hence in case $m = 1$.
The idea is to glue together the various
$u_{\vec{E}}$ in a coherent way in order to define a unitary $u \in \ell^\infty(M_m(\C))$
such that $\Phi^u$ lifts $\phi$ globally. We identify the unitaries in
$\ell^\infty(M_m(\C))$
with elements in $(\mathcal{U}(M_m(\C)))^\N$, being $\mathcal{U}(M_m(\C))$ the unitary
group of $M_m(\C)$.

For $u =(u(i))_{i \in \N}, v= (v(i))_{i \in \N} \in \mathcal{U}(\ell^\infty(M_m(\C)))$ and $I \subseteq \N$, define
\[
\Delta_I(u,v) : = \sup_{i,j \in I} \lVert u(i) u^*(j) - v(i) v^*(j) \rVert.
\]
\begin{lemma} \label{lemma:delta}
For all $I \subseteq \N$ and $u, v \in \mathcal{U}(\ell^\infty(M_m(\C)))$:
\begin{enumerate}
\item \label{item:delta1} $\Delta_I(u,v) \le 2 \sup_{i \in I} \lVert u(i) - v(i) \rVert$.
\item \label{item:delta2} $\Delta_I(u,v) \ge \sup_{j \in I} \lVert u(j)- v(j) \rVert - \inf_{i \in I} \lVert u(i) - v(i)\rVert$. In particular,
if $u(k) = v(k)$ for some $k \in I$, then $\Delta_I(u,v) \ge \sup_{j \in I} \lVert u(j) - v(j) \rVert$.
\item \label{item:delta3} If $w \in \mathcal{U}(M_m(\C))$ then $\Delta_I(u,v) = \Delta_I(u,vw)$.
\item \label{item:delta4} If $I \cap J \not = \emptyset$, then $\Delta_{I \cup J}(u,v) \le \Delta_I(u,v) + \Delta_J(u,v)$.
\item \label{item:delta5} $\inf_{w \in \mathcal{U}(M_m(\C))} \sup_{i \in I} \lVert u(i) - v(i) w \rVert \le \Delta_I(u,v)
\le 2 \inf_{w \in \mathcal{U}(M_m(\C))} \linebreak \sup_{i \in I}  \lVert u(i) - v(i) w \rVert$.
\end{enumerate}
\end{lemma}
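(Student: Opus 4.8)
The plan is to reduce everything to an elementary statement about diameters of sets of unitaries. The key observation is that, since each $u(i)$ and $v(i)$ is a unitary of $M_m(\C)$, left multiplication by $u^*(i)$ and right multiplication by $v(j)$ are isometries of $M_m(\C)$, so
\[
\lVert u(i) u^*(j) - v(i) v^*(j) \rVert = \lVert u^*(i) v(i) - u^*(j) v(j) \rVert .
\]
Writing $a(i) := u^*(i) v(i) \in \mathcal{U}(M_m(\C))$, this gives $\Delta_I(u,v) = \sup_{i,j \in I} \lVert a(i) - a(j) \rVert$, i.e. $\Delta_I(u,v)$ is the diameter of $\set{ a(i) : i \in I }$ inside the normed space $M_m(\C)$; moreover, since $u^*(i)$ is unitary, $\lVert u(i) - v(i) \rVert = \lVert 1 - a(i) \rVert$. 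I expect these two identities to trivialise items \eqref{item:delta1}--\eqref{item:delta5}.

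Concretely: for \eqref{item:delta1} I would split $u(i)u^*(j) - v(i)v^*(j) = (u(i) - v(i))u^*(j) + v(i)(u(j) - v(j))^*$ and apply the triangle inequality, using $\lVert u^*(j) \rVert = \lVert v(i) \rVert = 1$ and $\lVert (u(j) - v(j))^* \rVert = \lVert u(j) - v(j) \rVert$. For \eqref{item:delta2}, the triangle inequality $\lVert a(i) - a(j) \rVert \ge \lVert 1 - a(j) \rVert - \lVert 1 - a(i) \rVert$ yields, after taking the supremum over $j$ and then the supremum over $i$ (which on the right-hand side amounts to an infimum of $\lVert 1 - a(i) \rVert = \lVert u(i) - v(i) \rVert$), exactly the claimed bound; the ``in particular'' clause is immediate, because $u(k) = v(k)$ forces $a(k) = 1$, whence $\lVert a(k) - a(j) \rVert = \lVert u(j) - v(j) \rVert$ for every $j \in I$. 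Item \eqref{item:delta3} is direct from the definition, since $(v(i)w)(v(j)w)^* = v(i) w w^* v^*(j) = v(i)v^*(j)$. For \eqref{item:delta4}, given $i \in I$, $j \in J$ and any $k \in I \cap J$, the triangle inequality gives $\lVert a(i) - a(j) \rVert \le \lVert a(i) - a(k) \rVert + \lVert a(k) - a(j) \rVert \le \Delta_I(u,v) + \Delta_J(u,v)$, while the cases $i,j \in I$ and $i,j \in J$ are trivial. Finally, in \eqref{item:delta5} the upper bound follows from \eqref{item:delta3} and \eqref{item:delta1}, since $\Delta_I(u,v) = \Delta_I(u,vw) \le 2 \sup_{i \in I} \lVert u(i) - v(i)w \rVert$ for every $w \in \mathcal{U}(M_m(\C))$; for the lower bound I would simply fix any $i_0 \in I$ and take $w := v^*(i_0) u(i_0)$, so that $\lVert u(i) - v(i)w \rVert = \lVert v^*(i) u(i) - v^*(i_0) u(i_0) \rVert = \lVert a(i)^* - a(i_0)^* \rVert \le \Delta_I(u,v)$ for all $i \in I$.

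I do not anticipate a real obstacle here: the lemma is a bookkeeping tool whose content is the list of metric-space inequalities above. The only point that requires a moment's care is the lower bound in \eqref{item:delta5}, where one must check that the (near-)optimal ``rotation'' $w$ can be taken to be an honest unitary rather than, say, an average of unitaries; choosing $w = v^*(i_0) u(i_0)$ for a fixed index $i_0$ sidesteps this entirely. It is also worth recording the symmetric formula $\Delta_I(u,v) = \sup_{i,j \in I} \lVert u^*(i)v(i) - u^*(j)v(j) \rVert$ explicitly, since it is the form in which $\Delta_I$ will be used in the coherence and gluing argument that follows.
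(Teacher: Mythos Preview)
Your proposal is correct and follows essentially the same approach as the paper: both reduce $\Delta_I(u,v)$ to the diameter of $\set{v^*(i)u(i) : i \in I}$ (you use the adjoints $u^*(i)v(i)$, which is equivalent), and derive items \eqref{item:delta1}--\eqref{item:delta5} from elementary triangle inequalities. Your explicit choice $w = v^*(i_0)u(i_0)$ for the lower bound in \eqref{item:delta5} is exactly what underlies the paper's terse ``follows by item \eqref{item:delta3} plus items \eqref{item:delta1} and \eqref{item:delta2}''.
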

\begin{proof}
The proof of this lemma can be easily inferred from the proof of \cite[Lemma 1.5]{inner}. We have that
\begin{align*}
\lVert u(i) u^*(j) - v(i) v^*(j) \rVert &=  \lVert v^*(i)(u(i) u^*(j) - v(i) v^*(j)) u(j) \rVert 
 \\ &=
\lVert v^*(i) u(i) - 1 + 1- v^*(j) u(j) \rVert \\
&= \lVert v^*(i)(u(i) - v(i)) - v^*(j)(u(j) - v(j)) \rVert.
\end{align*}
This entails
\begin{multline*}
\lvert \lVert u(i) - v(i) \rVert - \lVert u(j) - v(j) \rVert \rvert \le \lVert u(i)u^*(j) - v(i)v^*(j) \rVert \le \\ 
\le \lVert u(i) - v(i) \rVert + \lVert u(j) - v(j) \rVert,
\end{multline*}
from which both item \eqref{item:delta1} and \eqref{item:delta2} follow. Item \eqref{item:delta3} is straightforward
to check since $v(i) ww^* v^*(j) = v(i) v^*(j)$. Notice that, unlikely the 1-dimensional case, it is important
to consider $vw$ rather than $wv$. Item \eqref{item:delta4} follows by the triangular inequality, since
$\lVert u(i) u^*(j) - v(i) v^*(j) \rVert$ is equal to $\lVert v^*(i) u(i) - v^*(j) u(j) \rVert$. Item \eqref{item:delta5} follows by item \eqref{item:delta3} plus
items \eqref{item:delta1} and \eqref{item:delta2}.
\end{proof}

\begin{lemma} \label{lemma:cmpct}
Let $u = (u(i))_{i \in \N},v= (v(i))_{i \in \N} \in \mathcal{U}(\ell^\infty(M_m(\C)))$.
\begin{enumerate}
\item \label{item:cmpct1} If $\lim_{i \to \infty} \lVert u(i) - v(i) \rVert = 0$ then $\Phi^u \sim_{\mathcal{K}(H)} \Phi^v$.
\item \label{item:cmpct2} $\Phi^u \sim_{\mathcal{K}(H)} \Phi^v$ on $\cD[\vec{E}]$ if and only if
$\lim \sup_n \Delta_{E_n} (u,v) = 0$.
\end{enumerate}
\end{lemma}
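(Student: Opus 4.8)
The plan is to do all computations inside the explicit block picture of $\Phi^u=\text{Ad}(u)\circ\Phi_m$. Identify $H$ with $H\otimes\C^m=\bigoplus_{i\in\N}H_i$, where $H_i:=\Phi_m(P_i)H$ is identified with $\C^m$; then $\Phi_m(T)$ has $(i,j)$-block $\langle T\xi_j,\xi_i\rangle\,1_m$, while $u=(u(i))_{i\in\N}$ acts diagonally, by $u(i)$ on $H_i$. I would first record two elementary facts: for all $T\in\cB(H)$,
\[
\Phi^u(T)-\Phi^v(T)=u\,[\Phi_m(T),u^*v]\,v^*,
\]
and that a block-diagonal operator with finite-rank blocks is compact if and only if the norms of its blocks tend to $0$.

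\emph{Part (1).} If $\lVert u(i)-v(i)\rVert\to 0$ then $u-v=\bigoplus_i(u(i)-v(i))$ is a norm-limit of finite-rank operators, hence compact, and $\Phi^u(T)-\Phi^v(T)=(u-v)\Phi_m(T)u^*+v\Phi_m(T)(u-v)^*\in\cK(H)$ for every $T$; this is exactly $\Phi^u\sim_{\cK(H)}\Phi^v$.

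\emph{Part (2).} Fix $\vec E=(E_n)_n\in\text{Part}_\N$ and $T\in\cD[\vec E]$, and write $T_n\in M_{\lvert E_n\rvert}(\C)$ for the $n$-th diagonal block of $T$, so $\lVert T_n\rVert\le\lVert T\rVert$. Since $u,v$ lie in the commutant of $\Phi_m[\ell^\infty]$ and $T$ commutes with every $P_{E_n}$, the operator $\Phi^u(T)-\Phi^v(T)$ is block-diagonal along $\{\Phi_m(P_{E_n})H\}_n$, and a one-line computation shows that its $n$-th block, written over $k,l\in E_n$, has $(k,l)$-block $(T_n)_{kl}\big(u(k)u(l)^*-v(k)v(l)^*\big)$; in particular these blocks have finite rank, so $\Phi^u(T)-\Phi^v(T)$ is compact if and only if the $n$-th block norms go to $0$. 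For the implication $\limsup_n\Delta_{E_n}(u,v)=0\Rightarrow\Phi^u\sim_{\cK(H)}\Phi^v$ on $\cD[\vec E]$: using the commutator identity and the unitarity of $u,v$, the $n$-th block of $\Phi^u(T)-\Phi^v(T)$ has the same norm as the $n$-th block of $[\Phi_m(T),u^*v]$; writing $w=u^*v$ and subtracting the central element $\bigoplus_{k\in E_n}w(k_0)$ (for any fixed $k_0\in E_n$), which commutes with $T_n\otimes 1_m$, this block equals $[\,T_n\otimes 1_m,\ \bigoplus_{k\in E_n}(w(k)-w(k_0))\,]$, whose norm is at most $2\lVert T_n\rVert\max_{k\in E_n}\lVert w(k)-w(k_0)\rVert$. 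Since $\lVert w(k)-w(k_0)\rVert=\lVert u(k)^*v(k)-u(k_0)^*v(k_0)\rVert=\lVert u(k)u(k_0)^*-v(k)v(k_0)^*\rVert\le\Delta_{E_n}(u,v)$, this is at most $2\lVert T\rVert\,\Delta_{E_n}(u,v)\to 0$, giving compactness for every $T$. For the reverse implication I would argue contrapositively: if $\Delta_{E_n}(u,v)>\delta>0$ for all $n$ in some infinite $N\subseteq\N$, then by finiteness of $E_n$ there are $i_n,j_n\in E_n$ with $\lVert u(i_n)u(j_n)^*-v(i_n)v(j_n)^*\rVert>\delta$; taking $T\in\cD[\vec E]$ with $T_n$ the matrix unit $e_{i_nj_n}$ for $n\in N$ and $T_n=0$ otherwise, the block formula shows the $n$-th block of $\Phi^u(T)-\Phi^v(T)$ has norm $\lVert u(i_n)u(j_n)^*-v(i_n)v(j_n)^*\rVert>\delta$ for infinitely many $n$, so $\Phi^u(T)-\Phi^v(T)$ is non-compact and $\Phi^u\not\sim_{\cK(H)}\Phi^v$ on $\cD[\vec E]$.

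The only step that is not a mechanical unwinding of definitions is the norm estimate on the $n$-th block in the easy direction of (2): a naive entrywise bound carries a factor $\lvert E_n\rvert$, which is fatal since the intervals $E_n$ grow, so one must instead realize that block as a commutator $[A,B]$ with $\lVert A\rVert\le\lVert T\rVert$ and $\lVert B\rVert\le\Delta_{E_n}(u,v)$ — which is precisely the point of the quantity $\Delta$ and of Lemma \ref{lemma:delta} (alternatively one may center $v$ by a single unitary using Lemma \ref{lemma:delta}\eqref{item:delta5} and bound $\sup_{k\in E_n}\lVert u(k)-v(k)w\rVert$ directly). No set-theoretic hypothesis enters this argument.
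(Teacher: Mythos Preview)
Your proof is correct and follows essentially the same approach as the paper: the reverse direction of (2) is identical (the paper's partial isometry $V$ is precisely your matrix-unit operator), and for the forward direction your ``subtract the central element $w(k_0)$'' step is exactly the paper's trick of replacing $v$ by $w(i)=v(i)v^*(k_n)u(k_n)$ on each $E_n$, only phrased as a direct commutator estimate rather than as a reduction to part (1).
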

\begin{proof}
This lemma (and its proof) is an adapted version of \cite[Lemma 1.6]{inner} for endomorphisms.
If $\lim_{i \to \infty} \lVert u(i) - v(i) \rVert$ is zero, it means that $u \sim_{\mathcal{K}(H)} v$,
hence $\Phi^u \sim_{\mathcal{K}(H)} \Phi^v$.

In order to prove item \eqref{item:cmpct2}, suppose first that $\lim \sup_n \Delta_{E_n} (u,v) = 0$.
For every $n \in \N$, let $k_n$ be $\min(E_n)$. Let $w =(w(i))_{i \in \N} \in \ell_{\infty}(M_m(\C))$
be the unitary defined, for $i \in E_n$, as
\[
w(i):= v(i) v^*(k_n) u(k_n).
\]
The unitary $\sum  P_{E_n} \otimes v^*(k_n) u(k_n) $ belongs to the commutant of
$\Phi_m[\cD[\vec{E}]]$, hence $\Phi^w = \Phi^v$ on $\cD[\vec{E}]$. On the other hand,
by items \eqref{item:delta2}-\eqref{item:delta3} of Lemma \ref{lemma:delta} we have that, for $i \in E_n$, $\lVert w(i) - u(i) \rVert
\le \Delta_{E_n}(u,w) = \Delta_{E_n}(u,v)$. Thus $\lim_{i \to \infty} \lVert w(i) - u(i) \rVert = 0$ and,
by item \eqref{item:cmpct1} of this lemma, $\Phi^u \sim_{\mathcal{K}(H)} \Phi^w = \Phi^v$.

To prove the other direction, suppose there is $\epsilon > 0$ and a subsequence $\set{n_k}_{k \in \N}$ such that
$\Delta_{E_{n_k}}(u,v) > \epsilon$. Fix two sequences $i_k,j_k \in E_{n_k}$ such that
$\lVert u(j_k)u^*(i_k) - v(j_k)v^*(i_k) \rVert > \epsilon$ for every $k \in \N$.
Let $\eta_k \in \C^m$ be a norm-one vector witnessing the previous inequality. Let $V$ be the partial
isometry in $\cD[\vec{E}]$ moving $\xi_{i_k}$ to $\xi_{j_k}$ (from the orthonormal
basis of $H$ we fixed at the beginning of \S\ref{S2}) for every $k \in \N$ and sending
all other vectors in $\set{\xi_n}_{n \in \N}$ to zero. We have that, if $\zeta \in \Phi_m(P_{i_k})$,
\[
\Phi^u(V) (\zeta) = u\Phi_m(V) u^*(i_k) (\zeta) = u(j_k) u^*(i_k)(\zeta),
\]
\[
\Phi^v(V) (\zeta) = v\Phi_m(V) v^*(i_k) (\zeta) = v(j_k) v^*(i_k)(\zeta).
\]
Thus, for the vector $\eta_k$ we fixed before (or rather for $0 \oplus \dots \oplus 0 \oplus \eta_k \oplus 0 \dots$,
where the non-zero coordinate appears in the $i_k$-th position), we have
\[
\lVert (\Phi^u(V) - \Phi^v(V)) (\eta_k) \rVert = \lVert (u(j_k)u^*(i_k) - v(j_k)v^*(i_k))(\eta_k) \rVert > \epsilon.
\]
Since this holds for every $k \in \N$, it follows that the difference $\Phi^u(V) - \Phi^v(V)$ is not compact.
\end{proof}

Given a function $f \in \N^\N$ we recursively define, as in \cite[\S 3.1]{inner}, the map $f^+$
by $f^+(0) = f(0)$ and $f^+(n +1) = f(f^+(n))$ for every $n \ge 1$.
Moreover we define, for $f \in \N^\N$ and $n \in \N$
\[
E^f_n:= [f(n), f(n+1)),
\]
\[
F^f_n:=[f^+(n), f^+(n+1)),
\]
\[
E^{f, \text{even}}_n := [f(2n), f(2n+2)),
\]
\[
E^{f, \text{odd}}_n := [f(2n+1), f(2n+3)).
\]

The corresponding partitions are $\vec{E}^f$, $\vec{F}^f$, $\vec{E}^{f, \text{even}}$ and $\vec{E}^{f, \text{odd}}$ respectively. We shall denote $\vec{E}^{f^+, \text{even}}$ and
$\vec{E}^{f^+, \text{odd}}$ by $\vec{F}^{f, \text{even}}$ and $\vec{F}^{f, \text{odd}}$.
\begin{lemma} \label{lemma:double}
Let $\phi: \cQ(H) \to \cQ(H)$ be a unital, locally trivial endomorphism which can be lifted to
$\Phi_m$ on $\ell_{\infty}/c_0$ for some $m \in \N$. For every $f \in \N^{\N}$ there is a unitary
$w \in \ell^\infty(M_m(\C))$ such that $\Phi^w$ lifts $\phi$ on both $\cD[\vec{E}^{f, \text{even}}]$
and $\cD[\vec{E}^{f, \text{odd}}]$.
\end{lemma}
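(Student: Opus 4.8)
The plan is to glue the two local lifts on $\cD[\vec{E}^{f,\text{even}}]$ and $\cD[\vec{E}^{f,\text{odd}}]$ into a single unitary $w$ using the observation that these two partitions agree modulo a ``shift by one block''. Concretely, by Lemma \ref{lemma:uni} fix unitaries $a, b \in \ell^\infty(M_m(\C))$ such that $\Phi^a = \text{Ad}(a)\circ\Phi_m$ lifts $\phi$ on $\cD[\vec{E}^{f,\text{even}}]_\cQ$ and $\Phi^b$ lifts $\phi$ on $\cD[\vec{E}^{f,\text{odd}}]_\cQ$. Since both $\Phi^a$ and $\Phi^b$ lift $\phi$ on the common coarsening $\ell^\infty/c_0$ (indeed both refine it, as both $\vec{E}^{f,\text{even}}$ and $\vec{E}^{f,\text{odd}}$ contain $\ell^\infty$'s diagonal masa up to compacts), we have $\Phi^a \sim_{\cK(H)} \Phi^b$ on $\ell^\infty$, and hence, by Lemma \ref{lemma:cmpct}\eqref{item:cmpct2} applied to any partition refining both — or more directly since $a b^* $ commutes with $\Phi_m[\ell^\infty]$ modulo compacts — we may massage $a$ and $b$ to agree on the overlaps of the blocks of the two partitions.

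The key combinatorial point is that the blocks $E^{f,\text{even}}_n = [f(2n), f(2n+2))$ and $E^{f,\text{odd}}_n = [f(2n+1), f(2n+3))$ interlock: each even block overlaps exactly two consecutive odd blocks and vice versa, and this overlap graph is a single infinite path. So I would define $w$ block-by-block along this path. Start by setting $w = a$ on the first even block $E^{f,\text{even}}_0$. Having defined $w$ on $E^{f,\text{even}}_n$ so that $\Phi^w = \phi$ on that block (i.e. $\text{Ad}(\text{the corresponding corner of }w)$ agrees with the lift coming from $a$, which amounts to $w$ and $a$ differing by a fixed unitary on that block), extend $w$ to the next odd block $E^{f,\text{odd}}_n$ by declaring $w(i) = b(i) \cdot c_n$ on $i \in E^{f,\text{odd}}_n$, where the single unitary $c_n \in \mathcal{U}(M_m(\C))$ is chosen so that $w$ matches its already-defined values on the overlap $E^{f,\text{even}}_n \cap E^{f,\text{odd}}_n$ in the $\Delta$-sense; then extend to $E^{f,\text{even}}_{n+1}$ by a similar correction of $a$, chosen to match $w$ on $E^{f,\text{odd}}_n \cap E^{f,\text{even}}_{n+1}$. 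Right-multiplying by a constant unitary on a block does not change which map $\Phi^{(\cdot)}$ induces on that block's corner (by Lemma \ref{lemma:delta}\eqref{item:delta3} together with the fact that $\sum P_{E_n}\otimes c_n^*$ lies in the commutant of $\Phi_m[\cD[\vec{E}]]$), so the resulting $w$ still lifts $\phi$ on each individual block of each of the two partitions.

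To conclude that $\Phi^w$ lifts $\phi$ globally on $\cD[\vec{E}^{f,\text{even}}]$ (and symmetrically on $\cD[\vec{E}^{f,\text{odd}}]$), I invoke Lemma \ref{lemma:cmpct}\eqref{item:cmpct2}: since $\Phi^w$ and $\Phi^a$ agree, up to a block-constant unitary, on every block $E^{f,\text{even}}_n$, we get $\Delta_{E^{f,\text{even}}_n}(w,a) \to 0$; indeed on each such block $w$ and $a$ differ only by the constant $c$-factor inherited from the construction, so $\Delta_{E^{f,\text{even}}_n}(w,a) = 0$ for every $n$. Hence $\Phi^w \sim_{\cK(H)} \Phi^a$ on $\cD[\vec{E}^{f,\text{even}}]$, and since $\Phi^a$ lifts $\phi$ there, so does $\Phi^w$; the same argument with $b$ and $\vec{E}^{f,\text{odd}}$ finishes the proof.

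The main obstacle I expect is \emph{verifying that the overlap corrections can actually be solved block by block without accumulating error}. In the one-dimensional case of \cite{inner} this is automatic because $\mathcal{U}(\C) = \mathbb{T}$ is abelian and ``agreeing up to a phase'' propagates trivially; in the $M_m(\C)$ case one must be careful that the constant unitary $c_n$ on an odd block is genuinely determined (not just up to its own ambiguity) by the requirement of matching on the \emph{lower} overlap, and then that this forced choice is automatically consistent with — or at least compatible modulo compacts with — the matching condition on the \emph{upper} overlap with the next even block, which is where we again have freedom to correct. The non-commutativity means one has to track on which side ($vw$ versus $wv$) the correction unitaries are applied, exactly as flagged in the remark after Lemma \ref{lemma:delta}\eqref{item:delta3}; getting the sides right is what makes the inductive extension well-defined. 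Once the bookkeeping is set up correctly, each step only ever adjusts finitely many coordinates' worth of data by a single matrix unitary, so there is no limiting/convergence issue, and $\Delta_{E_n}(w,a)$ is identically zero rather than merely tending to zero.
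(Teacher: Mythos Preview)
Your outline has the right shape (interlocking blocks, right-multiplication by block-constant unitaries) and is close to the paper's proof, but the central claim that $\Delta_{E^{f,\text{even}}_n}(w,a)=0$ for every $n$ is false, and this is exactly the obstacle you flagged and then dismissed. The constraints ``$w$ differs from $a$ by a single unitary on each even block'' and ``$w$ differs from $b$ by a single unitary on each odd block'' are incompatible: each refined interval $E^f_{2n+1}=[f(2n+1),f(2n+2))$ lies in both $E^{f,\text{even}}_n$ and $E^{f,\text{odd}}_n$, so you would need $a(i)d_n=b(i)c_n$ for all $i$ there, i.e.\ $a^*(i)b(i)$ constant on that interval, which fails in general. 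Your inductive step ``choose $c_n$ so that $w$ matches its already-defined values on the overlap'' therefore has no solution, and your final assertion that $\Delta_{E_n}(w,a)$ is ``identically zero rather than merely tending to zero'' is precisely backwards.

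The missing ingredient is the one you alluded to in your first paragraph and then never used: both $\Phi^a$ and $\Phi^b$ lift $\phi$ on the common \emph{refinement} $\cD[\vec{E}^f]$ (not just on $\ell^\infty$), so Lemma~\ref{lemma:cmpct}\eqref{item:cmpct2} gives $\Delta_{E^f_n}(a,b)\to 0$. The paper exploits this by building \emph{two} sequences: $u'$ obtained from $a$ by right-multiplying by a constant on each even block, and $v'$ obtained from $b$ by right-multiplying by a constant on each odd block, with the constants chosen so that $u'(f(n))=v'(f(n))$ for all $n$. Then $\Delta_{E^f_n}(u',v')=\Delta_{E^f_n}(a,b)\to 0$, and since $u',v'$ agree at the point $f(n)\in E^f_n$, Lemma~\ref{lemma:delta}\eqref{item:delta2} forces $\sup_{i\in E^f_n}\lVert u'(i)-v'(i)\rVert\to 0$, whence $u'\sim_{\cK(H)}v'$ and $w:=u'$ works. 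So the argument is genuinely asymptotic, and the convergence input $\Delta_{E^f_n}(a,b)\to 0$ is indispensable.
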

\begin{proof}
This proof follows the one of \cite[Lemma 3.5]{inner}.
By assumption there are two unitaries $u, v \in \ell^\infty(M_m(\C))$ such that $\Phi^u$ and
$\Phi^v$ lift $\phi$ on $\cD[\vec{E}^{f, \text{even}}]$ and $\cD[\vec{E}^{f, \text{odd}}]$ respectively.
We define inductively two unitaries $u', v' \in \ell^\infty(M_m(\C))$ as follows. For $i \in [f(0), f(2))$,
let $u'(i) = u(i)$. If $u'(i)$ has been defined for $i < f(2n)$, for $i \in [f(2n - 1), f(2n + 1))$ let
\[
v'(i) = v(i) v^*(f(2n - 1)) u'(f(2n-1)).
\]
If $v'(i)$ has been defined for $i < f(2n +1)$, for $i \in [f(2n), f(2n + 2))$ let
\[
u'(i) = u(i) u^*(f(2n )) v'(f(2n)).
\]
We have that $v'(f(n)) = u'(f(n))$, that $\Phi^u= \Phi^{u'}$ on $\cD[\vec{E}^{f, \text{even}}]$ and that
$\Phi^v= \Phi^{v'}$ on $\cD[\vec{E}^{f, \text{odd}}]$. This implies that, by item
\eqref{item:delta2} of Lemma \ref{lemma:delta},
\[
\sup_{i \in E^f_n} \lVert u'(i) - v'(i) \rVert \le \Delta_{E^f_n}(u', v').
\]

 On the other hand we have that
 $\Phi^{u'} = \Phi^u \sim_{\mathcal{K}(H)} \Psi^v = \Psi^{v'}$ on $\cD[\vec{E}^f]$
 by hypothesis (remember that $\cD[\vec{E}^f] \subseteq \cD[\vec{E}^{f, \text{even}}] \cap \cD[\vec{E}^{f, \text{odd}}]$), therefore by item \eqref{item:cmpct2}
 of Lemma \ref{lemma:cmpct} it follows that $\lim_{n \to \infty} \Delta_{E^f_n}(u', v') = 0$.
 By item \eqref{item:cmpct1} of Lemma \ref{lemma:cmpct}, we infer that $\Phi^{u'}$ and
 $\Psi^{v'}$ agree on $\cB(H)$
 up to compact operator. In conclusion, $\Phi^{u'}$ lifts $\phi$ on both $\cD[\vec{E}^{f, \text{even}}]$
and $\cD[\vec{E}^{f, \text{odd}}]$.
\end{proof}

Given $f, g \in \N^\N$, we write $g \le^* f$ if $g(n) \le f(n)$ for all but finitely many $n \in \N$. A subset
$\mathcal{F} \subseteq \N^\N$ is \emph{$\le^*$-cofinal} if for every $g \in \N^\N$ there is $f \in \mathcal{F}$
such that $g\le^* f$.

The next two lemmas are taken from \cite{inner}, but these ideas and arguments
can be traced back to \cite{analy} (see e.g. \cite[Lemma 2.2.2]{analy}) and \cite{parti}.

\begin{lemma}[{\cite[Lemma 3.3]{inner}}] \label{lemma:cof0}
Assume $\mathcal{F} \subseteq \N^\N$ is $\le^*$-cofinal.
\begin{enumerate}
\item \label{item:cof01} If $\mathcal{F}$ is partitioned into countably many pieces, then at least one
is $\le^*$-cofinal.
\item \label{item:cof02} $(\exists^\infty n) (\exists i) (\forall k \ge n)(\exists f \in \mathcal{F})
(f(i) \le n \text{ and } f(i+1) \ge k)$.
\item \label{item:cof03} $\set{f^+ : f \in \mathcal{F}}$ is $\le^*$-cofinal.
\end{enumerate}
\end{lemma}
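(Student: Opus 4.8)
The plan is to establish the three items in order, with \eqref{item:cof01} used inside the proof of \eqref{item:cof02}. Item \eqref{item:cof01} is a routine diagonalization: writing $\mathcal{F} = \bigsqcup_{n \in \N} \mathcal{F}_n$, suppose no $\mathcal{F}_n$ is $\le^*$-cofinal and fix, for each $n$, a function $g_n \in \N^\N$ with $g_n \not\le^* f$ for every $f \in \mathcal{F}_n$. Put $g(k) = \max\{g_n(k) : n \le k\}$, so that $g_n \le^* g$ for every $n$. By $\le^*$-cofinality of $\mathcal{F}$ there is $f \in \mathcal{F}$ with $g \le^* f$, and if $f \in \mathcal{F}_n$ then $g_n \le^* g \le^* f$, a contradiction.

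For item \eqref{item:cof02} I would argue by contradiction, negating the statement to obtain an $N$ such that
\[
S(n,i) := \sup\{\, f(i+1) : f \in \mathcal{F},\ f(i) \le n \,\}
\]
is finite for every $n \ge N$ and every $i$ (interpreting $\sup \emptyset$ as $0$); indeed, for fixed $n,i$ the displayed inner condition says exactly that this supremum is infinite. A direct attempt to contradict cofinality by building a single increasing $g$ whose growth is dictated by the numbers $S(n,i)$ fails, since an $f \in \mathcal{F}$ that dominates such a $g$ \emph{pointwise} never falls under the control of $S$. To get around this, first apply \eqref{item:cof01} to the partition of $\mathcal{F}$ by the value $f(0) \in \N$, obtaining a $\le^*$-cofinal subfamily $\mathcal{F}'$ on which $f(0)$ takes a constant value $m_0$. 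Now set $N^* = \max(N, m_0)$ and define recursively $c_0 = N^*$ and $c_{i+1} = \max(c_i + 1,\, S(c_i, i))$; each $c_i$ is finite because $c_i \ge N^* \ge N$, and $(c_i)_i$ is strictly increasing. An easy induction on $i$ shows that $f(i) \le c_i$ for every $f \in \mathcal{F}'$: the base case is $f(0) = m_0 \le c_0$, and if $f(i) \le c_i$ then $f(i+1) \le S(c_i,i) \le c_{i+1}$ by definition of $S$. Hence the function $i \mapsto c_i + 1$ is $\le^*$-dominated by no element of $\mathcal{F}'$, contradicting the $\le^*$-cofinality of $\mathcal{F}'$.

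Finally, for item \eqref{item:cof03} one reduces to the case — present in every application — where the members of $\mathcal{F}$ are strictly increasing with $f(0) \ge 1$, so that $f(k) \ge k+1$ for all $k$; an induction then gives $f^+(n) \ge n$, whence $f^+(n+1) = f(f^+(n)) \ge f(n)$, so $f^+$ dominates $f$ past the first coordinate. Given $g \in \N^\N$, which we may take strictly increasing, choose $f \in \mathcal{F}$ with $g \le^* f$; then $g \le^* f \le^* f^+$, and $\{f^+ : f \in \mathcal{F}\}$ is $\le^*$-cofinal. The only genuinely non-obvious point in the whole lemma is the need, in \eqref{item:cof02}, to trim $\mathcal{F}$ down to a cofinal subfamily with constant first coordinate before running the recursion — without that the recursion is worthless, and with it everything else is bookkeeping.
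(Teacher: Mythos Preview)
The paper does not supply its own proof of this lemma; it is quoted from \cite[Lemma~3.3]{inner} and used as a black box, so there is no in-paper argument to compare against. Your proofs of items \eqref{item:cof01} and \eqref{item:cof02} are correct and standard. In particular, the device in \eqref{item:cof02} of first passing, via \eqref{item:cof01}, to a $\le^*$-cofinal subfamily on which $f(0)$ is constant --- so that the recursion $c_{i+1} = \max(c_i+1, S(c_i,i))$ genuinely bounds every member of the subfamily --- is exactly the right move.

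Two remarks on item \eqref{item:cof03}. First, your instinct that an extra hypothesis is required is correct: taken literally, the item is false. If $\mathcal{F} = \{f \in \N^\N : f(0) = 0\}$ then $\mathcal{F}$ is $\le^*$-cofinal, yet $f^+(0) = 0$ and $f^+(n+1) = f(f^+(n)) = f(0) = 0$ for every such $f$, so $\{f^+ : f \in \mathcal{F}\}$ contains only the zero function. Some growth condition on the members of $\mathcal{F}$ is therefore implicit; what you assume (strictly increasing with $f(0)\ge 1$, hence $f(k) > k$) is sufficient and is what is tacitly in force in the applications. State it as an added hypothesis rather than a ``reduction'', since no reduction is available. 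Second, a small slip in your chain of inequalities: from $f(k)\ge k+1$ the induction actually yields $f^+(n)\ge n+1$, not merely $f^+(n)\ge n$. Using only the weaker bound gives $f^+(n+1)\ge f(n)$, which is domination of the \emph{shift} of $f$ and does not by itself give $f\le^* f^+$. With the sharper bound one gets $f^+(n+1)=f(f^+(n))\ge f(n+1)$, and then $f\le^* f^+$ follows as you claim.
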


\begin{lemma}[{\cite[Lemma 3.4]{inner}}] \label{lemma:cof}
Let $f,g \in \N^\N$ be such that $g \le^* f$. For all but finitely many $n \in \N$ there is $i$ such that
$f^+(i) \le g(n) < g(n+1) \le f^{+}(i+2)$. If $f(m) \ge g(m)$ for all $m \in \N$, then the previous statement holds for every $n \in \N$.
\end{lemma}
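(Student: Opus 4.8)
The plan is to argue directly from the recursive definition of $f^+$, using only that $f$ and $g$ are increasing --- the standing convention for the functions parametrizing partitions in $\text{Part}_\N$ --- so that $f^+$ is strictly increasing and unbounded, the intervals $[f^+(i),f^+(i+1))$ partition a tail of $\N$, and $g(n)\ge n$ for every $n$. First I would fix $N\in\N$ large enough that $g(n)\le f(n)$ for all $n\ge N$ (possible since $g\le^* f$) and that $g(N)\ge f^+(0)$ (possible since $g$ is increasing and unbounded). Fix $n\ge N$. Since $g(n)\ge g(N)\ge f^+(0)$ and the sets $[f^+(i),f^+(i+1))$ cover $[f^+(0),\infty)$, there is a unique $i$ with $f^+(i)\le g(n)<f^+(i+1)$; together with the strict monotonicity of $g$ this already gives the left half of the claim, $f^+(i)\le g(n)<g(n+1)$.

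The heart of the argument is the inequality $g(n+1)\le f^+(i+2)$, which hinges on the single observation that $n+1\le f^+(i+1)$: indeed $g(n)\ge n$, so $n\le g(n)<f^+(i+1)$, i.e.\ $n$ lies strictly below $f^+(i+1)$. Applying the increasing function $f$ to $n+1\le f^+(i+1)$ and using $g(n+1)\le f(n+1)$ (valid because $n+1\ge N$) yields
\[
g(n+1)\le f(n+1)\le f\bigl(f^+(i+1)\bigr)=f^+(i+2).
\]
Hence $f^+(i)\le g(n)<g(n+1)\le f^+(i+2)$ for every $n\ge N$, i.e.\ for all but finitely many $n$. This is exactly the place where the gap of \emph{two} iterates ($i+2$ rather than $i+1$) is needed: one iteration of $f$ is consumed in absorbing the jump from $g(n)$ to $g(n+1)$.

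For the final assertion I would revisit the only two points at which finitely many $n$ were discarded, namely the bounds $g(n)\le f(n)$ and $g(n)\ge f^+(0)$. Under the hypothesis $f(m)\ge g(m)$ for all $m$ the first holds for every $n$, and with the normalization $f^+(0)=f(0)=g(0)$ in force for partitions of $\N$ (equivalently, once $g(n)\ge f^+(0)$, which is then automatic from $n=0$ on) the second holds for every $n$ as well, so the argument above goes through for all $n\in\N$. I do not expect any genuine obstacle here: the whole content is the bookkeeping inequality $n+1\le f^+(i+1)$, and the only thing demanding a little care is keeping the monotonicity and normalization conventions on $f$ and $g$ straight (the statement is \cite[Lemma 3.4]{inner}).
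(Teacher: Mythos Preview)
The paper does not supply its own proof of this lemma; it is quoted verbatim from \cite[Lemma 3.4]{inner} and used as a black box. Your argument is the standard one and is correct: the decisive observation is exactly the bookkeeping inequality $n\le g(n)<f^+(i+1)$, whence $n+1\le f^+(i+1)$ and $g(n+1)\le f(n+1)\le f(f^+(i+1))=f^+(i+2)$. The only point that deserves a word of care is the last sentence: under the bare hypothesis $f(m)\ge g(m)$ for all $m$ you get $f^+(0)=f(0)\ge g(0)$, not equality, so the existence of $i$ with $f^+(i)\le g(0)$ is not automatic unless one also assumes the normalization $f(0)=g(0)$ (e.g.\ both equal to $0$); this is indeed the convention in \cite{inner} and in the applications here, so your appeal to it is legitimate, but it is an extra standing assumption rather than a consequence of $f\ge g$.
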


Lemma \ref{lemma:cof} entails that if $g \le^* f$ then for all but finitely many $n \in \N$ there
is $i_n \in \N$ such that $E^g_n \subseteq F^f_{i_n} \cup F^f_{i_n+1}$. In particular, if $f(m) \ge g(m)$ for
all $m \in \N$, then $\cD[\vec{E}^g]$ is contained in the algebra generated by
$\cD[\vec{F}^{f, \text{even}}] \cup \cD[\vec{F}^{f, \text{odd}}]$.
\begin{proof}[Proof of Theorem \ref{thrm:main2}]
We can assume that $\phi: \cQ(H) \to \cQ(H)$ is locally
represented on $\cD[\vec{E}]$ by $\Phi^{u_{\vec{E}}}$, where $u_{\vec{E}}$ is a unitary in
$\ell^\infty(M_m(\C))$ (see the paragraph after the statement of Theorem \ref{thrm:main2}). Let $\mathcal{X}
\subset \N^\N \times \mathcal{U}(M_m(\C))^\N$ be the set of all pairs $(f, u)$ such that
$\Phi^u$ lifts $\phi$ on both $\cD[\vec{F}^{f, \text{even}}]$ and $\cD[\vec{F}^{f, \text{odd}}]$.
By Lemma \ref{lemma:double}, for every $f \in \N^\N$ there is $u$ such that
$(f,u) \in \mathcal{X}$.

Fix $\epsilon > 0$ and consider the coloring of $[\mathcal{X}]^2 = K_0^\epsilon \sqcup K_1^\epsilon$, where
the pair $(f, u)$, $(g, v)$ has color $K_0^\epsilon$ if there are $m,n \in \N$ such that
$\Delta_{F^f_n \cap F^g_m}(u,v) > \epsilon$. We consider $\N^\N$ with the Baire space topology,
induced by the metric
\[
d(f,g) = 2^{-\min \set{n: f(n) \not=g(n)}}.
\]
This is a complete separable metric. We consider $\mathcal{U}(M_m(\C))^\N$ with the product
of the norm topology on $\mathcal{U}(M_m(\C))$ and $\mathcal{X}$ with the product topology. In this setting, it is straightforward
to check that $K_0^\epsilon$ is open.
\begin{claim} \label{main:claim1}
Assume OCA. For every $\epsilon > 0$ there are no uncountable $K_0^\epsilon$-homogeneous
subsets of $\mathcal{X}$.
\end{claim}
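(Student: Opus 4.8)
The plan is to argue by contradiction: suppose $\mathcal{Y} \subseteq \mathcal{X}$ is an uncountable $K_0^\epsilon$-homogeneous set for some $\epsilon > 0$. The first reduction I would make is to shrink $\mathcal{Y}$ to a still-uncountable subset on which the second coordinate $u$ is well controlled: since $\mathcal{U}(M_m(\C))^\N$ with the product-of-norm topology is separable, and in fact the relevant finite-dimensional data live in a compact space, I can pass to an uncountable subset on which the functions $n \mapsto u_{(f,u)}(n)$ agree up to small error on longer and longer initial segments — more precisely, for each $k$ all but countably many pairs have their first $k$ coordinates within $\epsilon/100$ of a fixed reference tuple. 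Simultaneously, using that the Baire-space metric on $\N^\N$ is separable, I pass to an uncountable subset on which the functions $f$ are pairwise very close in the Baire metric, i.e. they agree on a long common initial segment. The point of both reductions is standard in the \cite{inner}-style arguments: homogeneity in color $K_0^\epsilon$ forces the "difference" $\Delta$ to be large on some overlap $F^f_n \cap F^g_m$, but if $f,g$ agree on a long initial segment and $u,v$ agree (approximately) on the corresponding coordinates, then no such large discrepancy can occur at small indices, so the witnessing $n,m$ must be pushed out to infinity.

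Next I would exploit the cofinality machinery. The projection of $\mathcal{Y}$ to the first coordinate need not be cofinal, but I can use Lemma \ref{lemma:cof0}\eqref{item:cof01}: partition a given $\le^*$-cofinal family according to which "type" of initial behavior it exhibits relative to members of $\mathcal{Y}$, or alternatively observe that an uncountable subset of $\N^\N$ must contain, for each $g$, uncountably many $f$ with $f \not\le^* g$ failing in a controlled way. The cleaner route, following \cite[Lemma 3.5]{inner} and the surrounding discussion, is: fix two distinct pairs $(f,u), (g,v) \in \mathcal{Y}$ with $f$ and $g$ agreeing on a long initial segment but eventually differing; by $K_0^\epsilon$-homogeneity there exist $n,m$ with $\Delta_{F^f_n \cap F^g_m}(u,v) > \epsilon$. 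Since the intervals $F^f_n$ and $F^g_m$ are built from the iterates $f^+, g^+$, and since $f$ and $g$ coincide up to a large index, the overlapping block $F^f_n \cap F^g_m$ can only be nonempty and "new" past that index; combined with the approximate agreement of $u$ and $v$ on early coordinates (from the first reduction), this forces the discrepancy to be witnessed by coordinates $i,j$ both beyond where $u,v$ were pinned down. Iterating this for a sequence of pairs from $\mathcal{Y}$ whose initial-agreement lengths tend to infinity produces, along a single fixed partition $\vec{E} \in \text{Part}_\N$ (obtained by diagonalizing the blocks, using Lemma \ref{lemma:cof} to absorb each $F^f_n \cap F^g_m$ into a union of two consecutive blocks of a common refinement), infinitely many $n$ with $\Delta_{E_n}(u^*, v^*) > \epsilon$ for the relevant tails — but both $\Phi^{u}$ and $\Phi^{v}$ lift $\phi$ on $\cD[\vec{E}]$, so by item \eqref{item:cmpct2} of Lemma \ref{lemma:cmpct} we must have $\lim\sup_n \Delta_{E_n}(u,v) = 0$, a contradiction.

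The technical heart — and the step I expect to be the main obstacle — is the bookkeeping that glues a \emph{single} partition $\vec{E} \in \text{Part}_\N$ out of the various $\vec{F}^{f,\text{even}}, \vec{F}^{f,\text{odd}}$ coming from different members of $\mathcal{Y}$, in such a way that each witnessing overlap $F^f_n \cap F^g_m$ that carries a $>\epsilon$ discrepancy ends up contained in $E_\ell \cup E_{\ell+1}$ for some $\ell$, so that Lemma \ref{lemma:delta}\eqref{item:delta4} lets us conclude $\Delta_{E_\ell}(u,v) > \epsilon/2$ (or $\Delta_{E_{\ell+1}}(u,v) > \epsilon/2$) for infinitely many $\ell$. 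This requires carefully choosing, by the usual closing-off/diagonalization, an increasing sequence $(f_k, u_k) \in \mathcal{Y}$ so that $f_{k+1}$ dominates $f_k^+$ pointwise past a large index (possible because $\mathcal{Y}$ is uncountable, hence unbounded enough in the relevant sense — here one invokes that an uncountable subset of $\N^\N$ cannot be $\le^*$-bounded, via Lemma \ref{lemma:cof0}), then applying Lemma \ref{lemma:cof} to each consecutive pair. Once the combinatorial skeleton is in place, the estimates themselves are routine applications of Lemma \ref{lemma:delta}, and the final contradiction is immediate from Lemma \ref{lemma:cmpct}\eqref{item:cmpct2}. The role of OCA is not yet used inside this claim — it produces the homogeneous set $\mathcal{Y}$ whose existence we are refuting — so the claim itself is really a ZFC statement about the coloring, and OCA enters only when the claim is combined with the (forthcoming) analysis of countable $K_1^\epsilon$-coverings.
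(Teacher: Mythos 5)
Your proposal has the right flavor in places (separability of $\N^\N\times\mathcal{U}(M_m(\C))^\N$ to find nearby pairs, Lemma \ref{lemma:cof} to absorb blocks into two consecutive pieces of a refinement, appeal to Lemma \ref{lemma:cmpct}\eqref{item:cmpct2} for the endgame), but the overall strategy contains genuine gaps, and the remark at the end is wrong in a way that would derail the argument.

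First, you assert that \emph{OCA is not used inside the claim} and that the claim is ``really a ZFC statement.'' This is false: the paper's proof relies crucially on the fact, due to Todorcevic, that OCA forces $\mathfrak b\ge\aleph_2$, i.e.\ every $\aleph_1$-sized subset of $\N^\N$ is $\le^*$-bounded. It is exactly this boundedness that lets one pick a single $f\in\N^\N$ dominating the family $\mathcal F=\{g^+:\exists u\,(g,u)\in\mathcal H\}$. Worse, your sketch runs in the \emph{opposite} direction: you want to build an increasing sequence $(f_k,u_k)$ with $f_{k+1}$ eventually dominating $f_k^+$, ``because $\mathcal Y$ is uncountable, hence unbounded enough ... an uncountable subset of $\N^\N$ cannot be $\le^*$-bounded.'' That statement is false in ZFC (take any uncountable set below a fixed $f$), and it is the \emph{negation} of what OCA provides for $\aleph_1$-sized families. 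Lemma \ref{lemma:cof0} says nothing of the kind; it is a statement about $\le^*$-cofinal families.

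Second, there is a structural gap in how the contradiction is supposed to materialize. $K_0^\epsilon$-homogeneity gives, for each unordered pair $\{(f,u),(g,v)\}\subseteq\mathcal Y$, \emph{one} overlapping block $F^f_n\cap F^g_m$ with $\Delta>\epsilon$ --- not infinitely many for a fixed pair. Your plan to iterate over a sequence of pairs and thereby extract ``infinitely many $n$ with $\Delta_{E_n}(u,v)>\epsilon$'' conflates discrepancies between \emph{different} pairs with infinitely many discrepancies for a \emph{single} pair; only the latter would contradict $\limsup_n\Delta_{E_n}(u,v)=0$ from Lemma \ref{lemma:cmpct}\eqref{item:cmpct2}. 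What you are missing is the paper's triangulation through a ``master'' unitary: once $f$ dominates all of $\mathcal F$, one takes $u$ such that $\Phi^u$ lifts $\phi$ on $\cD[\vec F^{f,\text{even}}]$ and $\cD[\vec F^{f,\text{odd}}]$; since each $\cD[\vec F^g]$ is then contained in the algebra they generate, $\Phi^u$ lifts $\phi$ on $\cD[\vec F^g]$ as well, so $\lim_n\Delta_{F^g_n}(u,v)=0$ \emph{uniformly} along an uncountable subfamily. The contradiction then comes from choosing two members $(g,v),(h,w)$ agreeing on a long initial segment: any nonempty $F^g_n\cap F^h_m$ has both indices small (where $v\approx w$ by the separability reduction) or both large (where both are close to the master $u$), so $\Delta_{F^g_n\cap F^h_m}(v,w)<\epsilon$ in either case, against homogeneity. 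Your proposal identifies the separability reduction and the use of Lemma \ref{lemma:cof}, but without the OCA-bounding and the master-$u$ triangulation it does not close.
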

\begin{proof}
Fix $\epsilon > 0$ and let $\mathcal{H}$ be an uncountable $K_0^\epsilon$-homogeneous
subset of $\mathcal{X}$. Let
\[
\mathcal{F} = \set{g^+: \exists u (g,u) \in \mathcal{H}}.
\]
We can assume that $\mathcal{H}$, and thus $\mathcal{F}$, has size $\aleph_1$.
Under OCA, all $\le^*$-unbounded subsets of $\N^\N$
have size at least $\aleph_2$ (\cite[Theorems 3.4 and 8.5]{parti}).
Thus, there is $f \in \N^\N$ which is an upper bound for $\mathcal{F}$.
Using the pigeonhole principle, we can assume that there is $\overline{n} \in \N$
such that $f(m) \ge g^+(m)$ for all $g^+ \in \mathcal{F}$ and all $m \ge \overline{n}$, and moreover that
$g^+(i) = h^+(i)$ for all $g^+, h^+ \in \mathcal{F}$ and all $i \le \overline{n}$.
By increasing $f$ by $f(\overline{n})$
we can also assume that $f(m) \ge g^+(m)$ for all $g^+ \in \mathcal{F}$ and $m \in \N$.
By Lemma
\ref{lemma:cof} this entails that for every $n \in \N$ there is $i \in \N$ such that $E^{g^+}_n = F^g_{n}
\subseteq F^f_i \cup F^f_{i+1}$.

Let $u$ be a unitary in $\ell_{\infty}(M_m(\C))$ such that $\Phi^u$ lifts
$\phi$ on $\cD[\vec{F}^{f, \text{even}}]$ and $\cD[\vec{F}^{f, \text{odd}}]$.
Since,
by the previous observations, for every $g^+ \in \mathcal{F}$ we have that $\cD[\vec{F}^g]$
is contained in the algebra generated by $\cD[\vec{F}^{f, \text{even}}] \cup \cD[\vec{F}^{f, \text{odd}}]$,
it follows that $\Phi^u$ lifts $\phi$ also on $\cD[\vec{F}^g]$ and therefore, by item \eqref{item:cmpct2} of
Lemma \ref{lemma:cmpct}, we have that $\lim_{n \to \infty}
\Delta_{F^g_n} (u,v) = 0$ for every $(g,v) \in \mathcal{H}$. By taking an uncountable subset of $\mathcal{F}$ if necessary, we can assume that there is $\overline{k}$
such that $\Delta_{F^g_m}(u,v) < \epsilon/2$ for all $m \ge \overline{k}$ and all $(g, v) \in \mathcal{H}$.
By separability of $\mathcal{U}(M_m(\C))^\N$ there are $(g,v)$, $(h,w) \in \mathcal{H}$ such that
$g^+(i) = h^+(i)$ for all $i  \le \overline{k}$ and $\lVert w_i - v_i \rVert < \epsilon /2$
for all $i \le g^+(\overline{k})$. This entails that if $n,m \in \N$ are such that
$F_n^g \cap F_m^h \not = \emptyset$, then either both $m,n \le \overline{k}$ or $m,n \ge \overline{k}$.
In the former case it follows that $\Delta_{F_n^g \cap F_m^h}(v,w) < \epsilon$ by item \eqref{item:delta1}
of Lemma \ref{lemma:delta}. If $m,n \ge \overline{k}$ then we have
\[
\Delta_{F_n^g \cap F_m^h}(w,v) \le \Delta_{F_n^g \cap F_m^h}(u,v) + \Delta_{F_n^g \cap F_m^h}(w,u)
< \epsilon.
\]
This is a contradiction since $(g,v)$, $(h,w) \in \mathcal{H}$.
\end{proof}
By OCA, for every $\epsilon > 0$ there is a partition of $\mathcal{X}$ into countably many $K^\epsilon_1$-homogeneous sets.
Let $\epsilon_n = 2^{-n}$. Repeatedly using item \eqref{item:cof01} of Lemma \ref{lemma:cof0},
find sequences $\mathcal{X} \supseteq \mathcal{X}_0 \supseteq \dots \supseteq \mathcal{X}_n
\supseteq \dots$ and $0 = m(0) < m(1) < \dots < m(n) < \dots$ such that $\mathcal{X}_n$ is
$K^{\epsilon_n}_1$-homogeneous and such that the set $\set{f : (\exists u) (f,u) \in \mathcal{X}_n}$ is $\le^*$-cofinal.
Let $m(n)$ be the natural number given by item \eqref{item:cof02} of Lemma \ref{lemma:cof0} for
$\mathcal{X}_n$. For each $n \in \N$ fix a sequence $\set{(f_{n,i}, u_{n,i})}_{i \in \N}$ in $\mathcal{X}_n$
such that, for some $j_i \in \N$
\begin{equation} \label{ineq}
f_{n,i}^+(j_i) \le m(n) < m(n+i) \le f_{n,i}^+(j_i +1).
\end{equation}
By compactness of $\mathcal{U}(M_m(\C))^\N$, we can assume that each sequence $\set{u_{n,i}}_{
i \in \N}$ converges to some $u_n$.
\begin{claim} \label{main:claim2}
There is a subsequence $\set{u_{n_k}}_{k \in \N}$ such that
\[
\sup_{i \in [m(n_h), \infty)} \lVert
u_{n_k}(i) - u_{n_h}(i) \rVert \le \epsilon_k
\]
for all $ h \ge k$.
\end{claim}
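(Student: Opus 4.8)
The plan is to proceed in two steps. First I would establish, for every pair of integers $a<b$, the comparison estimate
\[
\Delta_{[m(b),\infty)}(u_a,u_b)\le\epsilon_a ,
\]
and then extract the subsequence by a recursion that at each stage right-multiplies the newly picked unitary by a constant in $\mathcal{U}(M_m(\C))$, so as to make it agree with the previous one at a single coordinate. What makes this renormalisation legitimate is that a constant unitary $c\in\mathcal{U}(M_m(\C))$, regarded as the constant sequence $1_H\otimes c$ inside $\ell^\infty(M_m(\C))\cong\Phi_m[\ell^\infty]'\cap\cB(H)$, actually commutes with all of $\Phi_m[\cB(H)]$, whence $\Phi^{uc}=\Phi^u$; thus each $u_n$ only matters up to right multiplication by such a constant (this is also why item \eqref{item:delta3} of Lemma \ref{lemma:delta} involves $vw$ rather than $wv$; cf.\ Remark \ref{remark:sc}), and we may freely replace the $u_{n_k}$ by renormalised representatives.

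For the first step, fix $a<b$. Since $\mathcal{X}_b\subseteq\mathcal{X}_a$ and $\mathcal{X}_a$ is $K_1^{\epsilon_a}$-homogeneous, we have $\Delta_{F^{f_{a,i}}_p\cap F^{f_{b,i'}}_q}(u_{a,i},u_{b,i'})\le\epsilon_a$ for all $i,i',p,q$. By \eqref{ineq} and the monotonicity of $m$, as soon as $i\ge b+i'-a$ the interval $[m(b),m(b+i'))$ is contained in a single block of $\vec{F}^{f_{a,i}}$ and in a single block of $\vec{F}^{f_{b,i'}}$, so $\lVert u_{a,i}(l)u_{a,i}(l')^*-u_{b,i'}(l)u_{b,i'}(l')^*\rVert\le\epsilon_a$ whenever $l,l'\in[m(b),m(b+i'))$. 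Now I would fix $l,l'\ge m(b)$, pick $i'$ with $m(b+i')>\max(l,l')$, and let first $i\to\infty$ and then $i'\to\infty$; since $u_{a,i}\to u_a$ and $u_{b,i'}\to u_b$ coordinatewise, this yields $\lVert u_a(l)u_a(l')^*-u_b(l)u_b(l')^*\rVert\le\epsilon_a$, and taking the supremum over $l,l'\ge m(b)$ gives the estimate.

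For the second step, set $n_0=1$ and $n_k=n_{k-1}+1$, and build unitaries $\tilde{u}_{n_k}=u_{n_k}c_k$ with $c_k\in\mathcal{U}(M_m(\C))$ (and $c_0=1$), recursively, maintaining, for every $j<k$, the invariant
\[
\sup_{i\ge m(n_k)}\lVert\tilde{u}_{n_j}(i)-\tilde{u}_{n_k}(i)\rVert\le\sum_{l=j}^{k-1}\epsilon_l/2 .
\]
Note $n_{k-1}\ge k$, so $\epsilon_{n_{k-1}}\le\epsilon_{k-1}/2$. Given $\tilde{u}_{n_0},\dots,\tilde{u}_{n_{k-1}}$, I would put $c_k:=u_{n_k}(m(n_k))^*\,\tilde{u}_{n_{k-1}}(m(n_k))$, so that $\tilde{u}_{n_k}$ and $\tilde{u}_{n_{k-1}}$ coincide at the coordinate $m(n_k)$; then item \eqref{item:delta2} of Lemma \ref{lemma:delta}, followed by item \eqref{item:delta3} and the first step, give
\[
\sup_{i\ge m(n_k)}\lVert\tilde{u}_{n_{k-1}}(i)-\tilde{u}_{n_k}(i)\rVert\le\Delta_{[m(n_k),\infty)}(\tilde{u}_{n_{k-1}},\tilde{u}_{n_k})=\Delta_{[m(n_k),\infty)}(u_{n_{k-1}},u_{n_k})\le\epsilon_{k-1}/2 ,
\]
and for $j<k-1$ the triangle inequality together with the invariant at stage $k-1$ (restricted to $[m(n_k),\infty)\subseteq[m(n_{k-1}),\infty)$) closes the induction.

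This establishes the invariant, so for every $h\ge k$
\[
\sup_{i\ge m(n_h)}\lVert\tilde{u}_{n_k}(i)-\tilde{u}_{n_h}(i)\rVert\le\sum_{l=k}^{h-1}\epsilon_l/2\le\sum_{l\ge k}2^{-l-1}=2^{-k}=\epsilon_k ,
\]
and relabelling $\tilde{u}_{n_k}$ as $u_{n_k}$ — harmless, since it does not change $\Phi^{u_{n_k}}$ — gives the claim. The main obstacle is the first step: one must fix $l,l'$ \emph{before} letting $i,i'$ grow, so that the relevant blocks of $\vec{F}^{f_{a,i}}$ and $\vec{F}^{f_{b,i'}}$ eventually contain the fixed pair $\{l,l'\}$; and one must keep in mind that the $\Delta$-estimate only controls $u_a,u_b$ up to a constant, so that the invariance of $u\mapsto\Phi^u$ under right multiplication by constants is exactly what allows it to be upgraded, via item \eqref{item:delta2} of Lemma \ref{lemma:delta}, to the genuine norm estimate the claim demands.
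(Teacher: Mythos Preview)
Your proof is correct and takes a genuinely different, more elementary route than the paper's. The first step --- establishing $\Delta_{[m(b),\infty)}(u_a,u_b)\le\epsilon_a$ --- is essentially the paper's (the paper argues by contradiction, you by a direct double limit; both rely on the same containment of $[m(b),m(b+i'))$ inside single blocks of the two $\vec F$-partitions). The divergence is in the second step. The paper invokes item \eqref{item:delta5} of Lemma \ref{lemma:delta} to extract constants $w_{h,n}\in\mathcal{U}(M_m(\C))$ realising the infimum, proves an approximate cocycle relation $\lVert w_{h,n}-w_{h,k}w_{n,k}^*\rVert\le 3\epsilon_h$, and then runs a Ramsey argument on triples to find an infinite set along which the $w_{n_i,n_j}$ are close to $1$. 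You instead observe that a constant $c\in\mathcal{U}(M_m(\C))$, identified with $1_H\otimes c$, commutes with all of $\Phi_m[\cB(H)]$ (not merely $\Phi_m[\ell^\infty]$), so that $\Phi^{uc}=\Phi^u$; this licenses a recursive right-renormalisation forcing $\tilde u_{n_k}(m(n_k))=\tilde u_{n_{k-1}}(m(n_k))$, after which item \eqref{item:delta2} upgrades the $\Delta$-bound to a sup-norm bound and a telescoping sum finishes. Your route dispenses with the Ramsey step and makes the ``subsequence'' trivial ($n_k=k+1$, chosen so that $\epsilon_{n_{k-1}}=\epsilon_{k-1}/2$ and the geometric tail sums to $\epsilon_k$). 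The only cost is that strictly speaking $\{\tilde u_{n_k}\}$ is not a subsequence of $\{u_n\}$; your justification that the relabelling is harmless is correct, since downstream the $u_{n_j}$'s enter only through the definition of $v$ and through $\Delta(\,\cdot\,,u_{n_j})$, and the latter is invariant under right multiplication by constants (item \eqref{item:delta3}), so that the continuity argument giving $\Delta_{F^g_i\setminus m_{n_j}}(w,u_{n_j})\le\epsilon_{n_j}$ goes through unchanged for $\tilde u_{n_j}$.
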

\begin{proof}
We start by showing that $\Delta_{[m(n), \infty)} (u_h, u_n) \le \epsilon_h$ for all $h < n$. Suppose this
is not the case and let $m(n) \le i_1 < i_2$ be such that $\lVert u_h(i_1)u^*_h(i_2) - u_n(i_1)u^*_n(i_2)
\rVert > \epsilon_h$. There is $j \in \N$ such that
\[
\lVert u_{h,j}(i_1)u^*_{h,j}(i_2) -
u_{n,j}(i_1)u^*_{n,j}(i_2) \rVert > \epsilon_h
\]
and by \eqref{ineq} there are $k_1, k_2 \in \N$ such that
\[
f^+_{n,j}(k_1) \le m(n) < i_2 < f^+_{n,j}(k_1+1),
\]
\[
f^+_{h,j}(k_2) \le m(h) < m(n) < i_2 < f^+_{h,j}(k_2+1).
\]
In particular, this entails that $\Delta_{F^{f_{h,j}}_{k_1} \cap F^{f_{n,j}}_{k_2}} (u_{h,j}, u_{n,j}) > \epsilon_h$,
which is a contradiction since $(f_{h,j}, u_{h,j})$ and $(f_{n,j}, u_{n,j})$ both belong to $\mathcal{X}_h$,
which is $K_1^{\epsilon_h}$-homogeneous.

By
item \eqref{item:delta5} of Lemma \ref{lemma:delta}, for every $h < n$
there is $w_{h,n} \in \mathcal{U}(M_m(\C))$ such that
\[
\sup_{i \ge m(n)} \lVert u_n(i) - u_h(i)w_{h,n} \rVert \le \epsilon_h.
\]
The unitary $w_{h,n}$ exists by compactness of $\mathcal{U}(M_m(\C))$.
Given $h < n < k \in \N$ we have that, for $i \ge
m(k)$
\[
u_h(i) w_{h,n} \approx_{\epsilon_h} u_n(i) \approx_{\epsilon_h}  u_k(i) w^*_{n,k} \approx_{\epsilon_h}
u_h(i) w_{h,k} w^*_{n,k},
\]
hence
\begin{equation} \label{unit}
\lVert w_{h,n} - w_{h,k} w^*_{n,k} \rVert \le 3 \epsilon_h.
\end{equation} This can be used to show that
there is an infinite $Y = \set{n_k}_{k \in \N} \subseteq \N$ such that, for $i < j \in \N$, then
$\lVert 1 - w_{n_i,n_j} \rVert \le 4 \epsilon_{n_i}$. To see this, define a coloring $M_0 \sqcup M_1$
on the triples of elements in $\N$, by saying that the triple $i < j < k$ is in $M_0$ if and only if
\[
\lVert 1- w_{j,k} \rVert \le 4 \epsilon_i.
\]

Suppose there is an infinite $M_1$-homogeneous set $Y$. Let $h$ be the minimum of $Y$. By
compactness of the unit ball of $M_m(\C)$ there is $n \in Y$ big enough so that, for some
$j < k < n$ all in $Y$ we have that $\lVert w_{j,n} - w_{k,n} \rVert < \epsilon_h$. It follows that
\[
\lVert w_{j,k} - 1 \rVert \le \lVert w_{j,k} - w_{j,n}w^*_{k,n} \rVert + \lVert 1 - w_{j,n}w^*_{k,n} \rVert
\stackrel{\mathclap{\eqref{unit}}}{\le} 4\epsilon_h,
\]
which is a contradiction, since the triple $(h,j,k)$ is supposed to be in $M_1$.

By Ramsey's theorem there
is an infinite $M_0$-homogeneous set $Y = \set{n_k}_{k \in \N}$. We have therefore, for $j > i \ge 1$
\[
\lVert 1 - w_{n_i, n_j} \rVert \le 4 \epsilon_{n_{i-1}}.
\]
Without loss of generality we can assume that $n_0 \ge 4$, hence that, for every $i \ge 1$
the following holds
\[
4 \epsilon_{n_{i-1}} \le \epsilon_{i-1}/4 = \epsilon_{i+1}.
\]
Summarizing, we have that for every $k < h \in \N$
\begin{align*}
\sup_{i \ge m(n_h)} \lVert u_{n_k}(i) - u_{n_h}(i) \rVert &\le \sup_{i \ge m(n_h)} \lVert u_{n_k}(i) w_{n_k,n_h}
- u_{n_h}(i) \rVert + \lVert 1 - w_{n_k,n_h} \rVert \\ &\le \epsilon_{n_k} + \epsilon_{k+1} \le \epsilon_k.
\end{align*}
\end{proof}
Let $\set{u_{n_k}}_{k \in \N}$ be the subsequence given by the previous claim and let $v \in 
\mathcal{U}(M_m(\C))^\N$ be defined as $v(i) = u_{n_k}(i)$ for all $i \in [m(n_k), m(n_{k+1}))$
and $v(i) = u_{n_0}(i)$ for all $i \le m(n_0)$. It follows then that $\lVert v(i) - u_{n_k}(i) \rVert
< \epsilon_k$ for all $i \ge m(n_k)$. Given $j \in \N$ and $(g,w) \in \mathcal{X}_{n_j}$, we claim
that for all $i \in \N$ we have $\Delta_{F^g_i \setminus m_{n_j}} (v,w) \le 3\epsilon_j$. This
is the case since for every $i \in \N$ there is $h\in \N$ such that
\[
[f_{n_j,h}^+(j_h), f_{n_j,h}^+(j_h + 1)) \supseteq F^g_i \setminus m_{n_j}.
\]
Hence, since both $g$ and $f_{n_j,h}$ belong to $\mathcal{X}_{n_j}$, we have that
$\Delta_{F^g_i \setminus m_{n_j}} (w, u_{n_j,h}) < \epsilon_{n_j}$. By continuity, we also have
$\Delta_{F^g_i \setminus m_{n_j}} (w, u_{n_j}) \le \epsilon_{n_j}$. Thus, in conclusion
\begin{align} \label{eq1}
\begin{split}
\Delta_{F^g_i \setminus m_{n_j}} (v,w) &\le \Delta_{F^g_i \setminus m_{n_j}} (v,u_{n_j}) + \Delta_{F^g_i \setminus m_{n_j}} (u_{n_j},w) \\ &\le 2 \sup_{h \in F^g_i \setminus m_{n_j}} \lVert v(h) - u_{n_k}(h) \rVert
+ \epsilon_{n_j} \\ &\le 3 \epsilon_j.
\end{split}
\end{align}

We conclude by showing that $\Phi^v$ lifts $\phi_{\vec{E}}$ for every partition $\vec{E} \in  \text{Part}_\N$. Let $g \in \N^\N$
be such that $\vec{E} = \vec{E}^g$ and find $u \in \mathcal{U}(M_m(\C))^\N$ such that $\Phi^u$
lifts $\phi$ on $\cD[\vec{E}^g]$. By item \eqref{item:cmpct2} of Lemma \ref{lemma:cmpct}, it
is enough to show that $\lim_{n \to \infty} \Delta_{E^g_n} (u,v) = 0$. Fix $k \in \N$, and let
$(f,w) \in \mathcal{X}_{n_k}$ be such that $f \ge^* g$. For all but finitely many $n\in \N$
there is $i \in \N$ such that $E^g_n \subseteq F^f_{i_n} \cup F^f_{i_n+1}$. This implies that
$\lim_{n \to \infty} \Delta_{E^g_n}(w,u) = 0$, which, by item \eqref{item:delta4} of Lemma \ref{lemma:delta}
in turn entails
\begin{align*}
\lim_{n \to \infty} \Delta_{E^g_n} (u,v) &\le \lim_{n \to \infty} \Delta_{E^g_n} (u,w) + \Delta_{E^g_n} (w,v) =
\lim_{n \to \infty} \Delta_{E^g_n} (w,v)  \\ &\le \lim_{n \to \infty} \Delta_{F^f_{i_n} \cup F^f_{i_n+1}} (w,v)
\\ &\le  \lim_{n \to \infty} \Delta_{F^f_{i_n} } (w,v) + \Delta_{F^f_{i_n+1}} (w,v) \\ &\stackrel{\mathclap{\eqref{eq1}}}{\le} 6 \epsilon_k.
\end{align*}
The inequality above holds for every $k \in \N$, thus $\lim_{n \to \infty} \Delta_{E^g_n} (u,v)$ is zero.
\end{proof}

\subsection{All endomorphisms are locally trivial}
\begin{theorem} \label{thrm:lctr}
Assume OCA.
Every unital endomorphism $\phi: \cQ(H) \to \cQ(H)$ is locally trivial.
\end{theorem}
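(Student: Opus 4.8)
Fix a partition $\vec{E} = \set{E_n}_{n \in \N} \in \text{Part}_\N$; the goal is to produce a unitary $v \in \cQ(H)$ and a strongly continuous $*$-homomorphism $\Psi : \cD[\vec{E}] \to \cB(H)$ lifting $\text{Ad}(v) \circ (\phi \restriction \cD[\vec{E}]_\cQ)$. Since $\cQ(H)$ is simple, $\phi$ is injective, so $\phi \restriction \cD[\vec{E}]_\cQ$ is a unital embedding of the reduced product $\prod_n M_{\lvert E_n \rvert}(\C) / \bigoplus_n M_{\lvert E_n \rvert}(\C)$ into $\cQ(H)$, and it is such embeddings that must be lifted. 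The plan is to adapt the main argument of \cite[\S 2]{inner} (see also \cite[\S 17]{ilijasbook}): first lift the restriction of $\phi$ to the diagonal masa $\ell^\infty / c_0 \subseteq \cD[\vec{E}]_\cQ$ to a strongly continuous $*$-homomorphism $\Phi : \ell^\infty \to \cB(H)$ (up to conjugation by a unitary), and then extend the lift across the matrix units of the blocks $M_{\lvert E_n \rvert}(\C)$.

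The first stage is the heart of the matter and the only place OCA is used; it runs through an OCA dichotomy exactly parallel to the one in \S\ref{4bis}. One builds a separable metric space $\mathcal{X}$ whose points are pairs $(f, \Theta)$ with $f \in \N^\N$ and $\Theta$ a system of finite-rank data encoding a lift of $\phi \restriction \ell^\infty/c_0$ which is coherent when read along the super-blocks $E^f_n = [f(n), f(n+1))$ of $\vec{E}^f$; such $\Theta$ exist for every $f$ because the relevant approximations of $\phi$ on finite families of the projections $q(P_k)$ can always be corrected to genuine finite-dimensional data, using the standard perturbation lemmas for approximate $*$-homomorphisms (\cite[\S 17]{ilijasbook}). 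As in the proof of Theorem \ref{thrm:main2}, $\N^\N$ carries the Baire-space topology and the unitary/partial-isometry coordinates the norm topology. For $\epsilon > 0$ one colours $[\mathcal{X}]^2 = K_0^\epsilon \sqcup K_1^\epsilon$ by declaring $\set{(f, \Theta), (g, \Xi)} \in K_0^\epsilon$ when the two encoded lifts differ by more than $\epsilon$ on some common sub-block, measured by a $\Delta_I$-quantity as in Lemma \ref{lemma:delta} and Lemma \ref{lemma:cmpct}; this set is open. An uncountable $K_0^\epsilon$-homogeneous set would produce, via Lemma \ref{lemma:cof0} and Lemma \ref{lemma:cof}, a $\le^*$-unbounded family of size $\aleph_1$, contradicting that under OCA every $\le^*$-unbounded subset of $\N^\N$ has size at least $\aleph_2$ (\cite[Theorems 3.4 and 8.5]{parti}); this is the exact analogue of Claim \ref{main:claim1}. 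So OCA partitions $\mathcal{X}$ into countably many $K_1^\epsilon$-homogeneous pieces, and, running $\epsilon$ through $2^{-n}$ and repeatedly extracting $\le^*$-cofinal subfamilies (Lemma \ref{lemma:cof0}\eqref{item:cof01}), a diagonalization identical to the one concluding the proof of Theorem \ref{thrm:main2} glues these into a single strongly continuous $*$-homomorphism $\Phi : \ell^\infty \to \cB(H)$ with $q \circ \Phi = \text{Ad}(v) \circ \phi \circ q$ on $\ell^\infty$. Being a limit of finite-dimensional data, $\Phi$ is finite-rank valued, so $\Phi[c_0] \subseteq \cK(H)$.

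For the second stage, by Lemma \ref{lemma:seq} applied to lifts arising from the two partitions used in the proof of Lemma \ref{prop:const} the sequence $\vec{n}_\Phi$ is eventually constant with some value $m$, so by Lemma \ref{lemma:shift} and a unitary conjugation we may take $\Phi = \Phi_m$ on $\ell^\infty/c_0$; by Remark \ref{remark:sc} the commutant of $\Phi_m[\ell^\infty]$ is $\ell^\infty(M_m(\C))$. It remains to lift the matrix units of each block $M_{\lvert E_n \rvert}(\C)$ coherently; relative to the fixed lift of the masa these lift block-by-block to $\mathcal{U}(M_m(\C))$-valued data, and assembling a global, strongly continuous choice is a second OCA argument of exactly the shape used in \S\ref{4bis}, now with the unitaries valued in $\mathcal{U}(M_m(\C))$, that is, precisely the setting of Lemma \ref{lemma:delta} and Lemma \ref{lemma:cmpct}. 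This yields the desired strongly continuous $\Psi : \cD[\vec{E}] \to \cB(H)$ lifting $\text{Ad}(v') \circ \phi$ on $\cD[\vec{E}]_\cQ$, which is what local triviality at $\vec{E}$ requires.

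I expect the main obstacle to be the first stage, and within it the choice of a coding $\mathcal{X}$ that simultaneously makes the colouring open, supports the $\le^*$-unbounded-family argument, and has the property that a coherent ($K_1^\epsilon$-homogeneous, cofinal) limit of the approximate data is a genuine rather than merely approximate strongly continuous $*$-homomorphism, with ranks controlled well enough that the $\Delta_I$-estimates of Lemma \ref{lemma:delta} apply throughout the assembly. The perturbation machinery for approximate $*$-homomorphisms (\cite[\S 17]{ilijasbook}, \cite{inner}) is designed for exactly this, but its interplay with the unbounded-family dichotomy is the delicate point.
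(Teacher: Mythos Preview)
Your approach diverges substantially from the paper's, and there is a genuine gap. The paper does not run a fresh OCA colouring argument in the style of \S\ref{4bis}; instead it quotes the existing OCA lifting machinery from \cite[\S6--7]{inner} and \cite[\S17]{ilijasbook}, which for any $\vec{E}$ with strictly increasing block sizes produces an infinite $X \subseteq \N$ and a strongly continuous $*$-homomorphism $\Psi: \cD[\vec{E}] \to \cB(H)$ lifting $\phi$ only on the subalgebra $\cD_X[\vec{E}]$. The new content of the paper's proof is a compression/decompression trick with partial isometries: one chooses $v \in \cB(H)$ with $v^*v = 1$ and $v\cD[\vec{E}]v^* \subseteq \cD_X[\vec{E}]$, lifts $\phi(q(v))$ carefully to some $w$, and defines $\Phi_{\vec{E}}(a)$ essentially as $\overline{w}^{\,*} S^r \Psi(vav^*) S^{-r} \overline{w}$, with an index correction $S^r$ to make everything unital. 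This is the Calkin-specific idea flagged in the introduction, and it is entirely absent from your sketch.

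Your plan has two concrete problems. First, the analogy with \S\ref{4bis} is misplaced: that argument takes as \emph{input} lifts $\Phi^{u_{\vec{E}}}$ on every $\cD[\vec{E}]$ (i.e.\ it assumes local triviality) and glues them; it does not manufacture a lift from approximate data. The OCA arguments that do the latter---the ones the paper cites---have a different shape and, crucially, only yield a lift on $\cD_X[\vec{E}]$ for some infinite $X$, not on all of $\ell^\infty$. Getting from there to a full lift is exactly where the partial-isometry step is needed. Second, your second stage invokes Lemma~\ref{prop:const} to conclude that $\vec{n}_\Phi$ is eventually constant, but that lemma has local triviality as a hypothesis and uses lifts on $\cD[\vec{E}_1]$ and $\cD[\vec{E}_2]$ (where the block structure forces rank constancy within blocks), not merely a lift on $\ell^\infty$; invoking it here is circular. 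The same circularity infects your proposal to lift matrix units via $\mathcal{U}(M_m(\C))$-valued data, since that is precisely the content of Lemma~\ref{lemma:uni}, which again assumes local triviality.
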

\begin{proof}
Given a partition $\vec{E} \in  \text{Part}_\N$, we want to find a finite index isometry $v_{\vec{E}}$ and a strongly
continuous, unital $*$-homomorphism $\Phi_{\vec{E}}: \cD[\vec{E}] \to \cB(H)$ such that
$\text{Ad}(v_{\vec{E}}) \circ \Phi_{\vec{E}}$ lifts $\phi$ on $\cD[\vec{E}]_\cQ$.
Without loss of generality, we take a partition $\vec{E} \in \text{Part}_\N$ composed
by intervals whose length is strictly increasing.

First, we need a fact following from OCA which is proved in \cite[\S6, \S7]{inner} and \cite[\S17]{ilijasbook}.
What OCA entails is the existence of a strongly continuous $*$-homomorphism $\Psi: \cD[\vec{E}] \to \cB(H)$
which lifts $\phi$ on $\cD_X[\vec{E}]$, for some infinite $X \subseteq \N$.  The paper \cite{inner} focuses on automorphisms $\cQ(H)$, but these
proofs also work for unital endomorphisms, as shown in \cite[\S17]{ilijasbook}.
More specifically, \cite[Lemma 17.6.3]{ilijasbook} (see the errata corrige \cite[Lemma 17.6.3]{corrige}), \cite[Lemma 17.7.1]{ilijasbook}, \cite[Lemma 17.7.2]{ilijasbook} and \cite[Lemma 17.4.5]{ilijasbook},
by successive refinements on liftings of $\phi$, allow to
find an infinite $Y \subseteq \N$ and a strongly continuous map $\Psi':\cD[\vec E] \to \cB(H)$
which lifts $\phi$ on $\cD_Y[\vec{E}]$. From there, the
proof of \cite[Theorem 6.3]{inner} (which does not require OCA and works verbatim for endomorphisms) shows how to find an infinite
$X \subseteq Y$ and refine $\Psi'$ to a strongly continuous $*$-homomorphism
$\Psi: \cD[\vec{E}] \to \cB(H)$ such that $\Psi$ lifts $\phi$ on $\cD_X[\vec{E}]$.

Alternatively, it is possible to use \cite[Theorem 8.4]{mckvignati} to directly obtain $X$
and $\Psi$. This result however, being a more general statement about corona algebras, requires the stronger assumption $\text{OCA}_\infty + \text{MA}_{\aleph_1}$ (see \cite[\S2.2]{mckvignati}). 

Given such lift $\Psi$ and $X \subseteq \N$, the idea now is to exploit
the abundance of partial isometries in $\cQ(H)$ to obtain a global unital lift in $\cD[\vec{E}]$.\footnote{This final step of the proof has also been proved independently in \cite[Proposition 17.5.5]{ilijasbook}.
Since at the time of the first drafting of this paper, the book \cite{ilijasbook} was in a preliminary stage
without such result, we carried out the rest of the proof in full detail. For the reader's convenience,
we decided to include it also in this final version of the paper.} Concretely, we `compress' elements of $\cD[\vec{E}]$ into $\cD_X[\vec{E}]$, apply $\Psi$
and finally we `decompress' their image in $\cB(H)$
(see also \cite[Lemma 4.1]{inner}).

Let $v \in \cB(H)$ be a partial isometry such that $v^*v = 1$, $P := vv^* \le P_X$ 
belongs to $\cD_X[\vec{E}]$ and such that
$v \cD[\vec{E}] v^* \subseteq \cD_X[\vec{E}]$. Such partial isometry exists since, by
assumption, the length of the intervals in $\vec{E}$ is strictly increasing.
Let $Q$ be the image of $P$ via $\Psi$. Since $P \in \cD_X[\vec{E}]$ we have that
\begin{equation} \label{QP}
q(Q) = \phi(q(P)).
\end{equation}
Let $w$ be a partial isometry lifting $\phi(q(v))$, hence we have
\begin{equation} \label{w}
q(w) = \phi(q(v)), \
1 \sim_{\cK(H)} w^*w \text{ and } Q
\sim_{\cK(H)} ww^*.
\end{equation}
\begin{claim} \label{cc}
Up to a compact perturbation, we can assume that $w$ satisfies the following porperties.
\begin{enumerate}
\item \label{ic1} $ww^* \le Q$.
\item \label{ic2} $w^*w \ge Q$.
\end{enumerate}
\end{claim}
\begin{proof}
We start by proving item \eqref{ic1}. Let $w'$ be a lift of $\phi(q(v^*))$.
Then $w'Q$ is also a lift of $\phi(q(v^*))$, since
\[
q(w'Q) = \phi(q(v^*)) \phi(q(P)) = \phi (q(v^*P)) = \phi(q(v^*)).
\]
Let $w' = u \lvert w'Q \rvert$ be the polar
decomposition of $w'$. Since $\lvert w'Q \rvert$ is a compact perturbation of the identity, we have
that $u$, whose kernel is equal to $\text{ker}(w'Q) \subseteq \text{ker}(Q)$, is also a lift of $\phi(q(v^*))$
such that $u^* u \le Q$. Let $w$ be $u^*$. Summarizing, we can assume that both
$1 - w^*w$ and $Q - ww^*$ are finite rank projections.

In order to prove item \eqref{ic2}, first notice that the space
$K: = (1-Q) H \cap w^*w H$ is infinite dimensional, as $(1-Q) H$ is infinite dimensional and
$w^*wH$ has finite codimension. Let $n$ be the
rank of $1 - w^*w$ and fix a set of linearly independent vectors $\set{\zeta_k}_{k < n}$ in $K$.
Let $\set{\eta_k}_{k < n}$ be a basis of $(1 - w^*w) H$ and modify $w$ to be the operator sending
all vectors in $\set{\zeta_k}_{k < n}$ to zero, sending $\eta_k$ to $w(\zeta_k)$ for every $k < n$, and
acting as $w$ everywhere else. With these (compact) modifications, we have that $w^*w \ge Q$.
\end{proof}

Let $\set{\zeta_k}_{k < n}$ be an orthonormal basis of $(1 - w^*w) H$, and let 
$\set{\zeta_k}_{k \in \N}$ be an orthonormal basis of $H$ extending it. Denote the shift operator
sending $\zeta_k$ to $\zeta_{k+1}$ for all $k \in \N$ by $S$ and
let $r \in \mathbb{Z}$ be the difference
\[
r: = \text{rk}(Q - ww^*) - \text{rk}(1 - w^*w).
\]
The operator $S^{-r} S^r$ is either the identity (if $r$ is either zero or positive)
or a projection greater than $w^*w$, therefore, by Claim \ref{cc}
\begin{equation} \label{great}
S^{-r} S^r \ge w^*w \ge Q \ge ww^*.
\end{equation}
Consequently, $\tilde{w} := S^r w S^{-r}$ is a partial isometry such that
$\tilde{w}^* \tilde{w} = S^r w^* w S^{-r}$ and $\tilde{w} \tilde{w}^* = S^r ww^* S^{-r}$.
Moreover $\text{rk}(S^r(Q - ww^*)S^{-r}) = \text{rk}(Q - ww^*)$, since
$S^r$ acts as an isometry on $(Q - ww^*)H$ and $S^{-r}$ acts as an isometry on
$S^r(Q - ww^*)H$, which is the case since $S^{-r}S^r \ge Q \ge Q - ww^*$.

The operator $1 - S^r w^*w S^{-r}$ is the orthogonal projection onto the span of
$\set{\zeta_k}_{k < r+n}$, therefore $\text{rk}(1 - S^rw^*wS^{-r}) = \text{rk}(1 - w^*w) + r$,
thus
\[
\text{rk}(S^r(Q - ww^*)S^{-r}) = \text{rk}(1 - S^rw^*wS^{-r}).
\]
Because of this, there is a partial isometry $\overline{w}$ such that
\begin{equation} \label{w1}
\overline{w}^*\overline{w} = 1, \ \overline{w}\overline{w}^* = S^r Q S^{-r}
\end{equation}
and
\begin{equation} \label{w2}
\overline{w} \sim_{\cK(H)} S^r w S^{-r}.
\end{equation}
We claim that the map
\begin{align*}
\Phi_{\vec{E}} : \cD[\vec{E}] &\to \cB(H) \\
a &\mapsto \overline{w}^*S^r \Psi(vav^*)S^{-r} \overline{w}
\end{align*}
is a strongly continuous, unital $*$-homomorphism lifting $\text{Ad}(q(S^r)) \circ \phi$ on $\cD[\vec{E}]_\cQ$.
Strong continuity, linearity and preservation of the adjoint operation follow since $\Psi$ has these
properties.
Unitality is a consequence of the definition of $\overline{w}$:
\[
\overline{w}^*S^r \Psi(vv^*)S^{-r} \overline{w} = \overline{w}^*S^r QS^{-r} \overline{w}
\stackrel{\eqref{w1}}{=}
\overline{w}^* \overline{w} \overline{w}^* \overline{w} \stackrel{\eqref{w1}}{=} 1.
\]
Given $a,b \in \cD[\vec{E}]$ we have that
\begin{align*}
\overline{w}^*S^r \Psi(vabv^*)S^{-r} \overline{w} \ &=
\ \overline{w}^*S^r \Psi(vav^*Pvbv^*)S^{-r} \overline{w}  \\  &=
\ \overline{w}^*S^r \Psi(vav^*) Q \Phi(vbv^*)S^{-r} \overline{w}  \\ &\stackrel{\mathclap{\eqref{great}}}{=}
\ \overline{w}^*S^r \Psi(vav^*) S^{-r} S^r Q S^{-r} S^r \Psi(vbv^*)S^{-r} \overline{w}  \\ & \stackrel{\mathclap{\eqref{w1}}}{=}
\ \overline{w}^*S^r \Psi(vav^*) S^{-r} \overline{w}\overline{w}^* S^r \Psi(vbv^*)S^{-r} \overline{w}.
\end{align*}
Finally, for $a \in \cD[\vec{E}]$, the following holds
\begin{align*}
q(\overline{w}^*S^r \Psi(vav^*)S^{-r} \overline{w}) \ & \stackrel{\mathclap{\eqref{w2}}}{=}
\ q(S^r w^* S^{-r}S^r)\phi(q(vav^*))q(S^{-r}S^r w S^{-r})  \\ & \stackrel{\mathclap{\eqref{QP}}}{=}
\ q(S^r w^*) q( S^{-r}S^r Q)\phi(q(vav^*))q(Q S^{-r}S^r) q( w S^{-r})  \\  &\stackrel{\mathclap{\eqref{great}}}{=}
\ q(S^r w^*) q( Q)\phi(q(vav^*))q(Q ) q( w S^{-r}) \\ &\stackrel{\mathclap{\eqref{QP}}}{=}
\ q(S^r w^*) \phi(q(vav^*))q( w S^{-r})  \\ & \stackrel{\mathclap{\eqref{w}}}{=}
\ q(S^r) \phi(q(a)) q(S^{-r}).
\end{align*}
\end{proof}

\section{Classification and Closure Properties} \label{Sclass}
We are finally ready to prove Theorem \ref{mt2} and Theorem \ref{closure}.

\begin{proof}[Proof of Theorem \ref{mt2}]

The forward direction of the equivalence is straightforward.

Suppose thus that $\phi_1$, $\phi_2$ are two
endomorphisms of $\cQ(H)$ that satisfy conditions \eqref{mta} and \eqref{mtb}
of the statement. By condition \eqref{mta}
we can assume that $\phi_1(1) = \phi_2(1) = p$. Notice that if there is a unitary $v \in \cQ(H)$ such that
$\text{Ad}(v) \circ \phi_1 = \phi_2$, then $vpv^* = p$, hence $v$ and $p$ commute.
This means that $v$ is a direct sum of
a unitary in $p\cQ(H) p$ and a unitary in $(1-p) \cQ(H) (1-p)$.
The only part of $v$ acting non-trivially on $\phi_1[\cQ(H)]$ is the one in $p \cQ(H) p$.
Hence, without loss of generality,
we can assume that $p= \phi_1(1) = \phi_2(1) = 1$.

By Theorem \ref{thrm:main} both $\phi_1$ and $\phi_2$ are trivial.
By definition of trivial, unital endomorphism, for $i=1,2$
there is a unitary $u_i \in \cQ(H)$ such that $\text{Ad}(u_i) \circ \phi_i$
lifts to strongly continuous endomorphism $\Phi_i$ of
$\cB(H)$, which can assumed to be unital (see Remark \ref{remark:uni}).
This means that there exists
$m_i \in \N \setminus \set{0}$ such that, up to unitary transformation, $\phi_i$
lifts to the map $\Phi_{m_i}: \cB(H) \to \cB(H \otimes \C^{m_i})$ sending $T$ to $T \otimes 1_{m_i}$
(see Remark \ref{remark:sc}). In particular, the index of $\phi_i(q(S))$
is $-m_i$. By condition \eqref{mtb} we have that $m_1 = m_2$, hence $\phi_1$ and $\phi_2$ are
unitarily equivalent.

The final sentence of the theorem follows from the previous paragraph of the proof,
since $\Phi_m \oplus \Phi_n = \Phi_{m+n}$ and $\Phi_m \circ \Phi_n = \Phi_{mn}$ for
every $m, n \in \N \setminus \set{0}$.
\end{proof}
\begin{remark} \label{nonu}
Every non-unital $\phi \in \text{End}(\cQ(H))$ can be written as a direct sum
$\phi_1 \oplus 0$ where $\phi_1$ is unital and $0$ is the zero endomorphism of $\cQ(H)$.
Therefore, by Theorem \ref{thrm:main}, every non-unital endomorphism $\phi$ is, up to
unitary equivalence, equal to $\Phi_m \oplus 0$ for some $m \in \N$.
Consider the set $\mathcal{N} := (\N \times \set{0,1}) \setminus \set{(0,1)}$ and, for
$\phi \in \text{End}(\cQ(H))$,
define $\text{ind}(\phi) := -\text{ind}(\phi(q(S)) + (1 - \phi(1)))$, where $S$ is the unilateral shift.
Consider the map:
\begin{align*}
\Theta: \text{End}(\cQ(H)) &\to \mathcal{N} \\
\phi &\mapsto
\begin{cases}
(0,0) \text{ if } \phi(1) = 0  \\
 (\text{ind}(\phi),1) \text{ if } \phi(1) = 1 \\
(\text{ind}(\phi),0) \text{ if } 0 < \phi(1) < 1
\end{cases}
\end{align*}
By Theorem \ref{thrm:main} and the previous observation,
the map $\Theta$ is a bijection, since all projections $p \in \cQ(H)$ such
that $0 < p < 1$ are unitarily equivalent in $\cQ(H)$. For $\phi_1, \phi_2 \in \text{End}(\cQ(H))$
we have that $\phi_1 \oplus \phi_2$ (and $\phi_1 \circ \phi_2$) is non-unital if and only if at least one
between $\phi_1$ and $\phi_2$ is non-unital.
Therefore, the map $\Theta$ is a semigroup isomorphism
between $(\text{End}(\cQ(H)), \oplus)$ and $(\mathcal{N}, +)$, where
the addition on $\mathcal{N}$ is defined as $(n,i) + (m,j) = (n+m,i\cdot j)$.
Analogously, $\Theta$ is an isomorphism between $(\text{End}(\cQ(H)), \circ)$ and $(\mathcal{N}, \cdot)$, where $(n,i) \cdot (m,j) = (n\cdot m,i\cdot j)$.
\end{remark}

\begin{proof}[Proof of Theorem \ref{closure}]
\eqref{oca1} Let $\phi: \cQ(H) \to \cQ(H)$ be a trivial, unital endomorphism. Up to unitary transformation there is
$m \in \N$ such that $\phi$ is induced by the map $\Phi_m: \cB(H) \to \cB(H \otimes \C^m)$ sending
$T$ to $T \otimes 1_m$ (see Remark \ref{remark:sc}). The commutant of $\Phi_m[\cB(H)]$ in $\cB(H \otimes \C^m)$ is isomorphic to $M_m(\C)$. By \cite[Lemma 3.2]{jp} also the commutant of $\phi[\cQ(H)]$
in the codomain $\cQ(H)$ is isomorphic to $M_m(\C)$. Thus, the commutant of
the image of a trivial, unital endomorphism of $\cQ(H)$ is always finite dimensional.

Let
$\cB \in \mathbb{E}$ be unital, infinite-dimensional \cstar-algebra. Consider the algebraic tensor product
$\cQ(H) \otimes_{\text{alg}} \cB$ and suppose $\psi: \cQ(H) \otimes_{\text{alg}} \cB \to \cQ(H)$ is
an embedding. The element $\psi(1)$ is a non-zero projection. Since $\psi(1) \cQ(H) \psi(1) \linebreak
\cong \cQ(H)$, we can assume that $\psi$ is unital. On the one hand, by Theorem \ref{thrm:main}
 the restriction of $\psi$ to $\cQ(H)$ is unital and trivial, on the other hand $\psi$ sends injectively
 $\cB$ into the commutant
 of $\psi[\cQ(H)]$, which is finite-dimensional. This is a contradiction, since $\cB$ is assumed to be
 infinite-dimensional.

\eqref{oca2} Let $\set{\cA_n}_{n \in \N}$ be an increasing sequence of finite-dimensional, unital \cstar-algebra
such that $\cA := \overline{\bigcup_{n \in \N} \cA_n}$ is an infinite-dimensional, unital AF-algebra.
We have
that $\cQ(H) \otimes \cA_n \in \mathbb{E}$ for all $n \in \N$, but, by the proof of item \eqref{oca1}, $\cQ(H)
\otimes \cA \notin \mathbb{E}$.
\end{proof}

By the results in \cite{inner} we know that it is consistent with ZFC that there is
no automorphism of $\cQ(H)$ sending the unilateral shift $S$ to its adjoint.
We can generalize this statement to unital endomorphisms of $\cQ(H)$.
\begin{corollary} \label{corollary}
Assume OCA. There is no unital endomorphism $\phi: \cQ(H) \to \cQ(H)$ sending
the unilateral shift to its adjoint or to any unitary of index zero.
\end{corollary}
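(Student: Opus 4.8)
The plan is to read off the statement from the classification already established. Under OCA, Theorem \ref{thrm:main} says every unital endomorphism $\phi : \cQ(H) \to \cQ(H)$ is trivial, so there are a unitary $v \in \cQ(H)$ and a strongly continuous, unital endomorphism $\Phi$ of $\cB(H)$ (unital after the harmless compression of Remark \ref{remark:uni}) with $q \circ \Phi = \text{Ad}(v) \circ \phi \circ q$. Since $q(T) = 0$ forces $q(\Phi(T)) = 0$, the map $\Phi$ carries $\cK(H)$ into $\cK(H)$, so Remark \ref{remark:sc} gives $m \in \N \setminus \set{0}$ and a unitary $U \in \cB(H)$ with $\Phi = \text{Ad}(U) \circ \Phi_m$, where $\Phi_m(T) = T \otimes 1_m$.

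Next I would compute the Fredholm index of $\phi(q(S))$. The unilateral shift satisfies $S^*S = 1$ and $SS^* = 1 - P_0$, so $\text{ind}(q(S)) = -1$ and hence $\Phi_m(S) = S \otimes 1_m$ is Fredholm of index $-m$. The index is a homomorphism from the group of invertibles of $\cQ(H)$ to $\mathbb{Z}$, and unitaries of $\cB(H)$ have index $0$; conjugating by $q(U)$ therefore preserves the index, giving $\text{ind}(q(\Phi(S))) = -m$. On the other hand $q(\Phi(S)) = v\,\phi(q(S))\,v^*$, and conjugation by the unitary $v \in \cQ(H)$ also preserves the index (because $\text{ind}(v^*) = -\text{ind}(v)$). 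Thus $\text{ind}(\phi(q(S))) = -m \le -1$. Alternatively, this is exactly the content of the last clause of Theorem \ref{mt2}, which records $-\text{ind}(\phi(q(S))) \in \N \setminus \set{0}$.

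Finally, $\text{ind}(q(S^*)) = 1$ and every unitary of $\cQ(H)$ of index zero has index $0$, neither of which is $\le -1$; hence no unital endomorphism of $\cQ(H)$ can send $q(S)$ to $q(S^*)$ or to any index-zero unitary. I do not expect any genuine obstacle here: the entire weight of the argument rests on Theorem \ref{thrm:main} and Remark \ref{remark:sc}, and the only point deserving explicit care is that the Fredholm index is invariant under conjugation by an arbitrary unitary of $\cQ(H)$, including one of nonzero index.
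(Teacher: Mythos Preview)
Your proof is correct and follows essentially the same approach as the paper: invoke Theorem \ref{thrm:main} to reduce to the trivial case, use Remark \ref{remark:sc} to identify the lift with $\text{Ad}(U)\circ\Phi_m$ for some $m\ge 1$, and conclude that $\text{ind}(\phi(q(S)))=-m<0$. The paper simply points back to the proof of Theorem \ref{mt2} for this index computation, whereas you spell it out explicitly (including the invariance of the index under conjugation by unitaries in $\cQ(H)$), but the substance is identical.
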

\begin{proof}
By Theorem \ref{thrm:main} all endomorphisms are trivial. In the proof of Theorem \ref{mt2} we showed that $\text{ind}(\phi(q(S)))$ is always a negative number if $\phi$ is unital and trivial. Since $\text{ind}(q(S^*)) =1$, the conclusion of the corollary follows.
\end{proof}

The following two examples witness the failure of Theorem \ref{mt2} and Theorem \ref{thrm:main} when
CH holds, suggesting a rather complicated picture of $\text{End}(\cQ(H))$ in that case.
In particular, while $\text{End}(\cQ(H))$ is countable under OCA (Theorem \ref{mt2}),
CH implies that $\text{End}(\cQ(H))$ has size $2^{\aleph_1}$, as shown in the following example.

\begin{example} \label{ex1}
An automorphism is trivial if and only if it is inner. Hence all the inequivalent $2^{\aleph_1}$ outer automorphisms
produced in \cite{outer} are examples of non-trivial endomorphisms of $\cQ(H)$ (outer
automorphisms of $\cQ(H)$ can also be built using the weakening of the
Continuum Hypothesis $\mathfrak{d} < \aleph_1 + 2^{\aleph_0} < 2^{\aleph_1}$,
see \cite{outer2} and \cite[\S17.1]{ilijasbook}).
All known cases of outer automorphisms of $\cQ(H)$ locally behave like inner automorphisms,
in particular they cannot change the value of the Fredholm index of a given element. Therefore, under CH
it is possible to find $2^{\aleph_1}$ inequivalent automorphisms which all send the unilateral
shift to an element of index -1.
\end{example}

\begin{example} \label{ex2}
In \cite{fhv} it is proved that all \cstar-algebras of density character
$\aleph_1$ can be embedded into $\cQ(H)$ with a map whose
restriction to a given separable, unital subalgebra is
a trivial extension\footnote{Given a \cstar-algebra
$\cA$ and a unital embedding $\theta: \cA \to \cQ(H)$, the map $\theta$ is a \emph{trivial extension of $\cA$} iff
there is a unital $*$-homomorphism $\Theta:\cA \to \cB(H)$ such that $\theta = q \circ \Theta$. This notion
of trivial maps, albeit more common, is different from the one we used throughout this paper in
in Definition \ref{trivial}.}. Under CH the
density character of $\cQ(H)$ is $\aleph_1$, thus there is a unital endomorphism
$\phi: \cQ(H) \to \cQ(H)$ that sends the unilateral shift $S$ to a unitary in $\cQ(H)$ which lifts to a unitary in $\cB(H)$,
which has therefore index zero.

By Corollary \ref{corollary}, this map cannot be a trivial endomorphism. With this example we see that without OCA the
range of values assumed by $\text{ind}(\phi(q(S)))$, as $\phi$ varies in $\text{End}_u(\cQ(H))$,
can be strictly larger than the negative numbers. Notice that the existence of an endomorphism
sending $S$ to $S^*$ would give an example where the index of the image of the shift is positive.
\end{example}

We do not know whether OCA is an optimal assumption to prove Theorem \ref{thrm:main2},
Theorem \ref{thrm:lctr} and their consequences exposed in the current section,
or if a weaker assumption might suffice.
Moreover, to the best of our knowledge, it is not known whether there are models of ZFC
where Theorem \ref{thrm:main2} holds and Theorem \ref{thrm:lctr} fails, or vice versa.

On the other hand, we can trace back
the failure of Theorem \ref{thrm:main} under CH to
the fact that both
Theorem \ref{thrm:main2} and Theorem \ref{thrm:lctr} fail under CH. Hence neither of them
can be proved in ZFC alone.

Indeed, the outer automorphisms built of $\cQ(H)$ in \cite[\S1]{inner} are locally trivial according
to our definition. They thus witness the failure of Theorem \ref{thrm:main2} under CH.

For what concerns Theorem \ref{thrm:lctr}, we have the following.
We have shown in the proof of Theorem \ref{mt2} that all unital trivial endomorphisms
send the unilateral shift to a unitary of $\cQ(H)$ with negative index. The same thing
can be proved for unital locally trivial endomorphisms.
Indeed, the unilateral shift $S$ is equal to $S_0 + S_1$ for
some $S_0 \in \cD[\vec E^{\text{id},\text{even}}]$, $S_1 \in \cD[\vec E^{\text{id},\text{odd}}]$.
Thus, by the results in \S\ref{S3} and Lemma \ref{lemma:double},
if $\phi: \cQ(H) \to \cQ(H)$ is a unital, locally trivial endomorphism,
it follows that $\text{ind}(\phi(q(S))) = -m$ for some non-zero $m \in \N$.
Therefore, the endomorphism provided in Example \ref{ex1} cannot be locally trivial.

In \cite{inner2} the author extends his results in \cite{inner} to all Calkin algebras on
nonseparable Hilbert spaces, showing that the Proper Forcing Axiom implies that
all automorphisms of the Calkin algebra on a nonseparable Hilbert space are inner. It would
be interesting to known whether those techniques can be generalized to study
the semigroup of endomorphisms of the Calkin algebra on a nonseparable Hilbert space.

In conclusion, we remark that an
interesting consequence of the simple structure of $\text{End}_u(\cQ(H))$ under OCA
is that the monoid $(\text{End}_u(\cQ(H)), \circ)$ is commutative (Theorem \ref{mt2}).
We expected this to be false under CH.

\begin{acknow}
I wish to thank Ilijas Farah for his suggestions concerning these problems and for his useful
remarks on the early (and the final) drafts of this paper. I also would like to thank Alessandro Vignati for the valuable conversations we had about these topics. Finally, I thank the anonymous referee for their
helpful remarks on the paper.
\end{acknow}

\bibliographystyle{amsalpha}
	\bibliography{Bibliography}

\end{document}